\documentclass[10pt]{amsart}
\usepackage{amssymb,latexsym,pstricks}

\topskip  \usepackage{color}
\usepackage{hyperref}
\usepackage{tensor}
\usepackage{enumitem}
\usepackage{mathabx}
\usepackage{comment}
\usepackage{amsfonts,amssymb,amsmath,amsthm,graphicx,latexsym}
\usepackage{mathrsfs,dsfont}

\usepackage{tikz}
\usepackage{bm}
\usepackage{caption}

\newtheorem{theorem}{Theorem}[section]
\newtheorem{proposition}[theorem]{Proposition}%
\newtheorem{lemma}[theorem]{Lemma}%
\newtheorem{corollary}[theorem]{Corollary}

\newtheorem{example}[theorem]{Example}%
\newtheorem{remark}[theorem]{Remark}%

\newtheorem{definition}[theorem]{Definition}

\newcommand{\tp}{\mathbin{\hbox{$\bigcirc$\hbox to 0pt{\hspace{-0.81em}$\scriptstyle\top$\hfil}}}}

\begin{document}
\title[Rook thery and normal ordering in the $q$-deformed Ore algebra]{Rook theory, normal ordering in the $q$-deformed Ore algebra and the polynomial generalization}
\author[M. Schork]{Matthias Schork $^{1}$}
\address{$^1$ Institute for Mathematics, Würzburg University, Emil-Fischer Str. 40, 97074 Würzburg, Germany, matthias.schork@mathematik.uni-wuerzburg.de}

\begin{abstract}
For words in the variables $X$ and $Y$ satisfying the commutation relation of the $q$-deformed generalized Ore algebra, $XY-qYX= \mu I + \nu Y$, we show that the corresponding normal ordering coefficients can be given an interpretation in terms of mixed placements of rooks and files. In particular, the associated $q$-deformed Ore-Stirling and Ore-Lah numbers are treated in detail. We show that the $q$-deformed Ore-Stirling numbers (resp., $q$-deformed Ore-Lah numbers) are given as mixed placement numbers of rooks and files on the staircase board (resp., Lah board). Using this combinatorial interpretation, their recurrence relations are derived. In addition, the normal ordered form of the binomial $(X+Y)^m$ in the $q$-deformed generalized Ore algebra is determined. These considerations are then extended to the $q$-deformed polynomial Weyl algebra generated by $X$ and $Y$ satisfying $XY-qYX=f(Y)$ for some polynomial $f\in \mathbb{C}[Y]$. In particular, associated $q$-deformed polynomial Stirling and Lah numbers are introduced and their properties studied. The normal ordered form of the binomial is also extended to the $q$-deformed polynomial Weyl algebra.

\end{abstract}
\keywords{Weyl algebra, Shift algebra, Normal ordering, Rook number, File number, Stirling number}
\subjclass[2020]{05A10, 05A30, 05A99, 11B73, 11B75.}
\maketitle

\section{Introduction}\label{Introduction}
The {\em $q$-deformed generalized Ore algebra} ${\mathcal O}_{\mu,\nu}(q)$ is defined as the complex unital algebra generated by two variables $X$ and $Y$ satisfying the commutation relation
\begin{equation}\label{CROre}
XY-qYX= \mu I + \nu Y
\end{equation}
for some complex parameters $\mu,\nu$. Letting $\mu=\nu=1$, we obtain the $q$-deformed Ore algebra ${\mathcal O}(q)={\mathcal O}_{1,1}(q)$. Letting instead  $q=1$, we obtain the undeformed generalized Ore algebra ${\mathcal O}_{\mu,\nu}={\mathcal O}_{\mu,\nu}(1)$. Letting furthermore $\mu=-1$ and $\nu=-1$, the defining commutation relation becomes $YX-XY=I+Y$ which coincides with the one of Patrias and Pylyavskyy \cite{PAPY2015,PAPY2018} who coined the name {\em Ore algebra}. They observed that by an appropriate scaling of the generators it suffices to consider this reduced form of the  commutation relation but for us it will be convenient to keep the parameters $\mu,\nu$. Patrias and Pylyavskyy \cite{PAPY2015,PAPY2018} studied this commutation relation in the corresponding theory of {\em dual filtered graphs} which is a K-theoretic analogue of Fomin's {\em dual graded graphs} \cite{SF1994,SF1995} and their predecessors, Stanley’s {\em differential posets} \cite{RS1988}. One way to view this theory is as a study of certain combinatorial representations of the first Weyl algebra ${\mathcal W}$. In fact, many results of \cite{SF1994,SF1995,RS1988} can be viewed as normal ordering results in the Weyl algebra. Starting with \cite{PAPY2015,PAPY2018}, some results on normal ordering in the Ore algebra have been derived recently. For $X$ and $Y$ satisfying $YX-XY=I+Y$, Yeliussizov \cite{DY2019} derived the normal ordered form of $Y^nX^m$ and showed that it involves the Eulerian numbers. Kodsueb and Lytvynov \cite{CKEL2025} considered ordering $(XY)^n$ in the parametrized situation $YX-XY=bI+aY$ studied also in \cite{TMMS2023} where Ore-Stirling numbers were introduced as normal ordering coefficients of $(YX)^n$. Recently, considering the $q$-deformed version, $YX-qXY=I+Y$, normal ordering $(XY)^n$ was studied by Briand \cite{Briand2025}. Note that Levandovskyy et al. \cite{LEKO2011} also considered \eqref{CROre} with respect to normal ordering. 

The Ore algebra ${\mathcal O}_{\mu,\nu}(q)$ can also be considered as a common generalization of the Weyl algebra and the shift algebra. Indeed, for $\nu =0$ and $\mu=1$, one obtains from \eqref{CROre} with $q=1$ the defining relation of the {\em Weyl algebra} ${\mathcal W}={\mathcal O}_{1,0}(1)$,
\begin{equation}\label{CRWeyl}
XY-YX=I,
\end{equation}
while, for $\mu =0$ and $\nu=1$, one obtains from \eqref{CROre} with $q=1$ the defining relation of the {\em shift algebra} ${\mathcal S}={\mathcal O}_{0,1}(1)$,
\begin{equation}\label{CRShift}
XY-YX=Y.
\end{equation}
Letting $\mu =\nu=0$, one obtains the {\em quantum plane} ${\mathcal O}_{0,0}(q)$ where $XY=qYX$.

We say a word $\omega$ in the letters $X$ and $Y$ is in {\it normal ordered form} if all the letters $Y$ stand to the left of all the letters $X$, i.e., $\omega$ is written in the form $\omega=\sum_{j,k}c_{j,k}Y^jX^k$ for some complex coefficients $c_{j,k}$, the {\em normal ordering coefficients}. For example, it is a famous result that one has in the Weyl algebra ${\mathcal W}$ the normal ordering result
 \begin{equation}\label{Scherk}
(YX)^n =\sum_{k=0}^n S(n,k) Y^kX^k,
\end{equation}
where the Stirling numbers of the second kind appear as normal ordering coefficients. The Weyl algebra has a concrete representation in terms of the derivative operator $D$ and the multiplication operator $M_x$, acting on functions of a real variable as $(Df)(x)=f'(x)$ and $(M_xf)(x)=xf(x)$, i.e., $DM_x-M_xD=I$. In this context, $M_x$ is also written as $X$, and \eqref{Scherk} translates into $(XD)^n =\sum_{k=0}^n S(n,k) X^kD^k$. In this form it was already known to Scherk in 1823, and he also considered $(X^rD)^n$ and the corresponding normal ordering coefficients. The generalization to other words, e.g., $(X^pD^q)^n$ was considered already in the early 1930s, see \cite{TMMS2016} for a comprehensive history of early results and \cite{BRLO2020} for some new results. See also the beautiful survey of Blasiak and Flajolet \cite{BLFL2011} for the many ramifications of normal ordering in the Weyl algebra (in the appendix one can find a detailed discussion of Scherk's dissertation from 1823). For the shift algebra  ${\mathcal S}$ one has analogous results, and the normal ordering coefficients of $(YX)^n$ in ${\mathcal S}$ are given by the unsigned Stirling numbers of the first kind $|s(n,k)|$. Normal ordering other words in the shift algebra has also been considered, see, e.g., \cite{MS2022} and the references given therein. The $q$-deformation of the Weyl algebra and the shift algebra has also been considered with respect to normal ordering, see \cite{TMMS2016}. In the undeformed case, one may consider, for $s\in \mathbb{N}_0$, the complex unital algebra $\mathcal{A}_{s;h}$ generated by $X$ and $Y$ satisfying
\begin{equation}\label{CRMonom}
XY-YX=hY^s.
\end{equation}
Clearly, for $s=0$ and $h=1$, one recovers the commutation relation of the Weyl algebra, while for $s=1$ and $h=1$ the one of the shift algebra. The case $s=2$ is also known and describes the {\em Jordan plane}, see \cite{TMMS2016} and the references given therein. For arbitrary $s$ the study of this relation -- and normal ordering words in $X$ and $Y$ satisfying \eqref{CRMonom} -- was begun by Burde \cite{Burde2005} and Varvak \cite{AV2005}. It is natural to introduce {\em generalized Stirling numbers} ${\mathfrak S}_{s;h}(n,k)$ as normal ordering coefficients of $(YX)^n$ in $\mathcal{A}_{s;h}$, see \cite{TMMSMS2011,TMMSMS2012}. In the generalized Ore algebra (where $q=1$) analogues of the Stirling numbers have also been introduced as normal ordering coefficients of $(YX)^n$ \cite{TMMS2023}.

As a common generalization of the $q$-deformed Ore algebra ${\mathcal O}_{\mu,\nu}(q)$ and $\mathcal{A}_{s;h}(q)$ where \eqref{CRMonom} is $q$-deformed to $XY-qYX=hY^s$, one may consider the {\em $q$-deformed polynomial Weyl algebra} $\mathcal{A}_{f}(q)$ which is defined as the complex unital algebra generated by $X$ and $Y$ satisfying
\begin{equation}\label{CRParWeyl}
XY-qYX=f(Y), \,\, f(Y)=\sum_{j=0}^s \alpha_j Y^j,
\end{equation}
where $f$ is a polynomial in $Y$ and $\alpha_j \in \mathbb{C}$. Normal ordering in $\mathcal{A}_{f}(q)$ was started in \cite{TMMS2011} but for $q=1$ Viskov \cite{OVV1996} considered some aspects already in 1996 (and, even earlier, in 1979 Irving \cite{RSI1979} derived some ordering results en route). Benkart et al. \cite{BLO2013,BLO2015,BLO2015a} considered $\mathcal{A}_{f}(1)$ in depth (including some ordering results) and this algebra was considered subsequently by several authors, see the recent survey of Lopes \cite{Lopes2024} on different generalizations of the Weyl algebra and related structures (from a more algebraic perspective). In another direction, normal ordering was studied very recently  by Rubiano and Reyes \cite{ARAR2026} for 3-dimensional skew polynomial rings with 3 generators satisfying 3 commutation relations. 

Let us turn to the connection to rook theory. Rook theory began in 1946 with the publication of \cite{KR1946}. Since then, the original problem of counting the number of possibilities of placing (non-attacking) rooks on certain boards, in particular {\it Ferrers boards}, has been extended by many authors in many different directions. In our context, the first hint for a connection between normal ordering coefficients and rook numbers seems to be by Navon \cite{AMN1973}, but the precise connection was given by Varvak in the fundamental paper \cite{AV2005} (see also Fomin \cite{SF1995}). Briefly, to a word in $X$ and $Y$ satisfying \eqref{CRWeyl}, one associates a Ferrers board, and the normal ordering coefficients are given by the rook numbers of the board. For the particular word $(YX)^n$, the board associated is the staircase board $J_n$, and it is well known that the rook numbers of $J_n$ are given by $S(n,k)$, thereby recovering \eqref{Scherk}. Varvak also showed that normal ordering words in $X$ and $Y$ satisfying \eqref{CRMonom} involves the $s$-rook placements introduced by Goldman and Haglund \cite{JGJH2000}. In particular, choosing $s=0$, the $0$-rook placements correspond to rook placements, and it is equally well known that the file numbers of the staircase board $J_n$ are given by $|s(n,k)|$, thereby recovering the analogue of \eqref{Scherk} in the shift algebra. An extension to the $q$-deformed case of \eqref{CRMonom} was given by Celeste et al. \cite{CCG2017}.
 
In \cite{TMMS2023}, analogues of the Stirling numbers of the second kind were defined in the Ore algebra as normal ordering coefficients of $(YX)^n$, and it was shown these {\em Ore-Stirling numbers} can be expressed in terms of Stirling numbers of the second kind and the unsigned Stirling numbers of the first kind. This was shown in an algebraic fashion, but a combinatorial interpretation was lacking. In the present paper, we will give a combinatorial interpretation for normal ordering arbitrary words in ${\mathcal O}_{\mu,\nu}(q)$ defined by \eqref{CROre}. We also show how this interpretation can be extended to the  $q$-deformed polynomial Weyl algebra $\mathcal{A}_{f}(q)$ defined by \eqref{CRParWeyl}. 

The paper is structured as follows. In Section~\ref{Normal}, we recall rook and file placements and their connection to normal ordering. We then introduce a $q$-statistic on mixed rook and file placements and show that this describes the normal ordering coefficients in the $q$-deformed Ore algebra. As an example, we consider the $q$-deformed Ore-Stirling and Ore-Lah numbers in this context. In Section~\ref{Binomial}, we investigate the binomial formula for $(X+Y)^m$ in the $q$-deformed generalized Ore algebra. All these considerations are extended to the $q$-deformed polynomial Weyl algebra in Section~\ref{Polynomial}. Some conclusions are presented in Section~\ref{Conclusion}.

\section{Normal ordering and mixed placements of rooks and files}\label{Normal}
Let a word $\omega$ in the letters $X$ and $Y$ be given. We associate to it a {\em Ferrers board} $B_{\omega}$ -- independent from the commutation relation $X$ and $Y$ satisfy -- in the following fashion: Start from $(0, 0)$ in $\mathbb{Z}^2$ and represent the letter $X$ as a step to the right and the letter $Y$ as a step up, reading the word from left to right. Then the resulting path outlines a Ferrers board denoted by $B_{\omega}$. For instance, $\omega=(YX)^n$ outlines the {\em staircase board} $J_n$. In Figure~\ref{Board}, the staircase board $J_4$ corresponding to $(YX)^4$ is shown on the left and the Ferrers board corresponding to the word $X^2YXYX^2Y$ on the right (the path outlining the board is drawn fat).

\begin{figure}[h]
\centering
\begin{tikzpicture}

\draw (0,0.5) -- (0,2);
\draw[very thick] (0,0) -- (0,0.5);
\draw (0.5,1) -- (0.5,2);
\draw[very thick] (0.5,0.5) -- (0.5,1);
\draw (1,1.5) -- (1,2);   
\draw[very thick] (1,1) -- (1,1.5);
\draw[very thick] (1.5,1.5) -- (1.5,2);  

\draw (0,2) -- (1.5,2);
\draw[very thick] (1.5,2) -- (2,2);
\draw (0,1.5) -- (1,1.5);
\draw[very thick] (1,1.5) -- (1.5,1.5); 
\draw (0,1) -- (0.5,1);
\draw[very thick] (0.5,1) -- (1,1); 
\draw[very thick] (0,0.5) -- (0.5,0.5);


\draw (4,0.5) -- (4,2);
\draw (4.5,0.5) -- (4.5,2);
\draw[very thick] (5,0.5) -- (5,1);
\draw (5,1) -- (5,2);   
\draw[very thick] (5.5,1) -- (5.5,1.5);
\draw (5.5,1.5) -- (5.5,2); 
\draw (6,1.5) -- (6,2);
\draw[very thick] (6.5,1.5) -- (6.5,2);  

\draw (4,2) -- (6.5,2);
\draw (4,1.5) -- (5.5,1.5);
\draw[very thick] (5.5,1.5) -- (6.5,1.5);
\draw (4,1) -- (5,1);
\draw[very thick] (5,1) -- (5.5,1); 
\draw[very thick] (4,0.5) -- (4.5,0.5);
\draw[very thick] (4.5,0.5) -- (5,0.5);

\end{tikzpicture}
\caption{The Ferrers boards associated to $(YX)^4$ (left) and $X^2YXYX^2Y$ (right).}\label{Board}
\end{figure}
Given a Ferrers board $B$, we call a placement of $k$ rooks in $B$ such that there is at most one rook in each column a {\em file placement of $k$ rooks}, or also a $k$-{\em file placement}. The set of all $k$-file placements on $B$ will be denoted by $\mathcal{F}_k(B)$, and $f_k(B)=|\mathcal{F}_k(B)|$ will be called $k$-{\em th file number of $B$}. A $k$-{\em rook placement} is a special kind of file placement where in addition no two rooks are in the same row (i.e., the $k$ rooks are {\it non-attacking}), see Figure~\ref{FigFile}. The set of all $k$-rook placements on $B$ will be denoted by $\mathcal{R}_k(B)$, and $r_k(B)=|\mathcal{R}_k(B)|$ will be called $k$-{\em th rook number of $B$}.

\begin{figure}[h]
\centering
\begin{tikzpicture}
\draw (0,0.5) -- (1,0.5);
\draw (0.5,0.5) -- (0.5,1.5);
\draw (1,0.5) -- (1,2);  
\draw (1.5,1) -- (1.5,2); 
\draw (2,1.5) -- (2,2);
\draw (2.5,1.5) -- (2.5,2);  
\draw (0,0.5) -- (0,2);
\draw (0.5,1) -- (0.5,2); 
\draw (0,1) -- (1.5,1); 
\draw (0,1.5) -- (2.5,1.5); 
\draw (0,2) -- (2.5,2);
\filldraw (0.25,0.75) circle (3pt);
\filldraw (1.25,1.75) circle (3pt);
\filldraw (1.75,1.75) circle (3pt);
\end{tikzpicture}
\caption{A Ferrers board with a $3$-file placement which is not a $3$-rook placement.}\label{FigFile}
\end{figure}
\begin{example}
Let us consider the {\em staircase board} $J_n$. It is well known (see, e.g., \cite[Section 2.4.4]{TMMS2016} and the references given therein) that 
\begin{equation}\label{StairEq}
r_{n-k}(J_n)=S(n,k), \,\,\,\, f_{n-k}(J_n)=|s(n,k)|.
\end{equation}
\end{example}

Now, let us draw the connection to normal ordering. For this, we have to consider the concrete commutation relation between $X$ and $Y$ at hand and use its implications for placing rooks (or files). 

Let us start with the Weyl algebra ${\mathcal W}$ where $XY=YX+\mu I$. Recall that the process of normal ordering a word $\omega$ in ${\mathcal W}$ consists of selecting the rightmost corner of $B_{\omega}$ (i.e., corresponding to the rightmost subword $XY$) and either placing a rook (corresponding to a contraction $XY \rightsquigarrow \mu I$) or leaving it empty (corresponding to the commutation $XY \rightsquigarrow YX $). By iterating over all possibilities of placing the rooks (or not), one obtains that the normal ordering coefficients are given as sum over all possible rook placements. Let $\omega$ be a word containing $m$ letters $X$ and $n$ letters $Y$. Each placement of $k$ rooks on the board -- representing one particular sequence to normal order --  yields a summand in the normal ordered form proportional to $\mu^k Y^{n-k}X^{m-k}$ since $k$ pairs of $X$ and $Y$ are contracted. In total, one obtains the normal ordering result \cite{SF1995,AV2005}
\begin{equation}\label{NORWeyl}
\omega=\sum_{k=0}^{\min(m,n)} \mu^k  r_k(B_{\omega}) Y^{n-k}X^{m-k}.
\end{equation}
For example, when $\omega=(YX)^n$, the corresponding Ferrers board is the staircase board $J_n$, and \eqref{NORWeyl} together with \eqref{StairEq} recovers \eqref{Scherk}. The study of the normal ordering coefficients of arbitrary words in the Weyl algebra was started by McCoy in 1934 \cite{Coy1934} and several different combinatorial interpretations have been given, see \cite{BLFL2011} and \cite{TMMS2016} for a survey.

For the $q$-deformed Weyl algebra, the procedure is similar, but we have to introduce certain $q$-weights for the rook placements to capture the factor $q$ when not placing a rook (corresponding to the $q$-commutation $XY \rightsquigarrow qYX $). For this, let $B$ be a Ferrers board and let $\phi \in \mathcal{R}_k(B)$. We partition the boxes of $B$ as follows into classes:
\begin{enumerate}
\item A box is called a {\em rook box} if a rook is placed in it,
\item A box is called a {\em cancelled box}, if it is not a rook box and lying in the same column above a rook or in the same row to the left of a rook,
\item All remaining boxes are {\em empty boxes}. 
\end{enumerate} 
In the following figures, a rook boxed will be marked by a filled square, a cancelled box by an $X$. 

The {\em $q$-weight of the placement} $\phi \in \mathcal{R}_k(B)$ is defined to be 
\begin{equation}
w_{\mu;q}(B,\phi)=\mu^{\# \mbox{rook boxes}} q^{\# \mbox{empty boxes}}=\mu^{k} q^{\# \mbox{empty boxes}}.
\end{equation}
The {\em $q$-deformed rook numbers} are defined by 
\begin{equation}
r_k(B;q)=\sum_{\phi \in \mathcal{R}_k(B)} w_{\mu;q}(B,\phi),
\end{equation}
and the $q$-deformed analogue of \eqref{NORWeyl} holds true for the $q$-deformed Weyl algebra \cite{AV2005}, 
\begin{equation}\label{NORQWeyl}
\omega=\sum_{k=0}^{\min(m,n)} r_k(B_{\omega};q) Y^{n-k}X^{m-k}.
\end{equation}
Clearly, for $q=1$, one has $r_k(B;1)=\mu^k |\mathcal{R}_k(B)|=\mu^k r_k(B)$, recovering the undeformed situation. The $q$-weights above go back to Garsia and Remmel \cite{GR1986}.
\begin{example}\label{ExampleNOWeyl}
Consider the word $\omega = X^2YXYX^2Y$ in the $q$-deformed Weyl algebra with Ferrers board displayed in Figure~\ref{Board}. Consider the rook placement of two rooks shown in Figure~\ref{RookPl}. Its $q$-weight is given by $ \mu^2 q^3$, thus the contribution of the particular sequence of normal ordering depicted by this placement will be $\mu^2 q^3 YX^{3}$. This rook placement describes the follwowing sequence of steps, from right to left: 1) $q$-commute, 2) contract, 3) $q$-commute, 4) contract, 5) $q$-commute. Thus:
$$
X^2YXYX^2Y \overset{1)}\rightsquigarrow q X^2YXYXYX \overset{2)}\rightsquigarrow \mu q X^2YXYX \overset{3)}\rightsquigarrow \mu q^2 X^2Y^2X^2  \overset{4)}\rightsquigarrow \mu^2 q^2 XYX^2 \overset{5)}\rightsquigarrow \mu^2 q^3 YX^3, 
$$    
as it should.
\begin{figure}[h]
\centering
\begin{tikzpicture}
\draw (0,0.5) -- (1,0.5);
\draw (0.5,0.5) -- (0.5,1.5);
\draw (1,0.5) -- (1,2);  
\draw (1.5,1) -- (1.5,2); 
\draw (2,1.5) -- (2,2);
\draw (2.5,1.5) -- (2.5,2);  
\draw (0,0.5) -- (0,2);
\draw (0.5,1) -- (0.5,2); 
\draw (0,1) -- (1.5,1); 
\draw (0,1.5) -- (2.5,1.5); 
\draw (0,2) -- (2.5,2);
\filldraw (1.625,1.625) rectangle (1.875, 1.875);
\filldraw (0.625,0.625) rectangle (0.875, 0.875);
\node at (0.25,1.75) {$\mbox{X}$};
\node at (0.75,1.75) {$\mbox{X}$};
\node at (1.25,1.75) {$\mbox{X}$};
\node at (0.75,1.25) {$\mbox{X}$};
\node at (0.25,0.75) {$\mbox{X}$};
\end{tikzpicture}
\caption{A rook placement of 2 rooks (boxes are marked according to their class).}\label{RookPl}
\end{figure}
\end{example}

Let us turn to the $q$-deformed shift algebra where $XY=q YX+\nu Y$. The process of normal ordering a word in the $q$-deformed shift algebra consists of selecting the rightmost corner (i.e., corresponding to a subword $XY$) and either placing a file (corresponding to a contraction $XY \rightsquigarrow \nu Y$) or leaving it empty (corresponding to a $q$-commutation $XY \rightsquigarrow qYX $). By iterating over all possibilities of placing the files (or not), one obtains that the normal ordering coefficients are given as sum over all $q$-weighted file placements. Let $B$ be a Ferrers board and let $\phi \in \mathcal{F}_k(B)$. We partition the boxes of $B$ as follows into classes:
\begin{enumerate}
\item A box is called a {\em file box} if a file is paced in it,
\item A box is called a {\em cancelled box}, if it is not a file box and lying in the same column above a file,
\item All remaining boxes are {\em empty boxes}. 
\end{enumerate} 
The {\em $q$-weight of the placement} $\phi \in \mathcal{F}_k(B)$ is defined to be 
\begin{equation}
w_{\nu;q}(B,\phi)=\nu^{\# \mbox{file boxes}} q^{\# \mbox{empty boxes}}=\nu^{k} q^{\# \mbox{empty boxes}}.
\end{equation}
The {\em $q$-deformed file numbers} are defined by 
\begin{equation}
f_k(B;q)=\sum_{\phi \in \mathcal{F}_k(B)} w_{\nu;q}(B,\phi).
\end{equation}
 Let $\omega$ be a word containing $m$ letters $X$ and $n$ letters $Y$. Note that here the number of letters $Y$ is not changed in a contraction. Thus, each placement of $k$ files on the board -- representing one particular sequence to normal order --  yields a summand in the normal ordered form proportional to $\nu^k Y^{n}X^{m-k}$. In total, one has the following result in the $q$-deformed shift algebra, 
\begin{equation}\label{NORQShift}
\omega=\sum_{k=0}^{m} f_k(B_{\omega};q) Y^{n}X^{m-k},
\end{equation}
in analogy to \eqref{NORQWeyl} in the $q$-deformed Weyl algebra. In particular, letting $\omega=(YX)^n$ and $q=1$, one has $B_{\omega}=J_n$ and letting $\ell = n-k$ this yields upon using  \eqref{StairEq}
\begin{equation}\label{ShiftScherk}
(YX)^n=\sum_{\ell=0}^{n} f_{n-\ell}(J_n) Y^{n}X^{\ell}=\sum_{\ell=0}^{n}|s(n,\ell) |Y^{n}X^{\ell},
\end{equation}
the well-know analogue of \eqref{Scherk} in the shift algebra.

Let us turn to the $q$-deformed generalized Ore algebra ${\mathcal O}_{\mu,\nu}(q)$ where $XY=qYX+\mu I + \nu Y$, see \eqref{CROre}. To a word $\omega$ in the letters $X$ and $Y$ we associate a Ferrers board $B_{\omega}$ as described above. The process of normal ordering proceeds exactly as in the case of the $q$-deformed Weyl algebra ${\mathcal O}_{1,0}(q)$ and the $q$-deformed shift algebra ${\mathcal O}_{0,1}(q)$ as described above -- with one important difference: On the right-hand side we now have three options instead of two since here are two ways to contract! Thus, selecting the rightmost corner (i.e., corresponding to a subword $XY$), we can 
\begin{itemize}
\item leave it empty (corresponding to $q$-commute $XY \rightsquigarrow q YX$), or
\item place a rook (corresponding to the strong contraction $XY \rightsquigarrow \mu I$), or
\item place a file (corresponding to the weak contraction $XY \rightsquigarrow \nu Y$).
\end{itemize}
Then one has to sum over all possibilities of placing the rooks or files (or not). We now formalize these notions. 

\begin{definition}
Let a Ferrers board $B$ be given. A placement of $k$ rooks and $\ell$ files on the board $B$ will be called {\em non-attacking}, if 
\begin{itemize}
\item No two (or more) rooks lie in the same row or column, and
\item No two (or more) files lie in the same column, and
\item No file and rook lie in the same column, and no file lies to the left of a rook in the same row.
\end{itemize}
The set of all non-attacking placements of $k$ rooks and $\ell$ files on the board $B$ will be denoted by $\mathcal{M}_{k,\ell}(B)$ ($\mathcal{M}$ for ``mixed''). 
\end{definition}
The first condition is the rule for placing rooks, the second condition is the rule for placing files, and the third conditon is the rule for their ``interaction''. As an example, in Figure~\ref{MixedPl}, a board $B$ is shown with a non-attacking placement of 2 rooks and 3 files. Here and in the rest of this section we mark a rook by a filled square and a file by a filled circle to distinguish them. 

\begin{figure}[h]
\centering
\begin{tikzpicture}
\draw (-0.5,2) -- (3,2);
\draw (-0.5,1.5) -- (3,1.5); 
\draw (-0.5,1) -- (2,1);
\draw (-0.5,0.5) -- (1.5,0.5);
\draw (-0.5,0) -- (1,0);
\draw (-0.5,-0.5) -- (1,-0.5);
\draw (-0.5,-1) -- (0.5,-1);

\draw (-0.5,-1) -- (-0.5,2);
\draw (0,-1) -- (0,2);
\draw (0.5,-1) -- (0.5,2);
\draw (1,-0.5) -- (1,2);
\draw (1.5,0.5) -- (1.5,2); 
\draw (2,1) -- (2,2);
\draw (2.5,1.5) -- (2.5,2); 
\draw (3,1.5) -- (3,2);  

\filldraw (1.25,0.75) circle (3pt);
\filldraw (1.75,1.75) circle (3pt);
\filldraw (2.25,1.75) circle (3pt);
\filldraw (0.125,0.625) rectangle (0.375, 0.875);
\filldraw (0.625,-0.375) rectangle (0.875, -0.125);
\end{tikzpicture}
\caption{A Ferrers board with a non-attacking placement of $2$ rooks and 3 files.}\label{MixedPl}
\end{figure}

Let $B$ be a Ferrers board and let $\phi \in \mathcal{M}_{k,\ell}(B)$. Similar to above, we partition the boxes of $B$ as follows into classes:
\begin{enumerate}
\item A box is called a {\em rook box} if a rook is placed in it,
\item A box is called a {\em file box} if a file is placed in it,
\item A box is called a {\em cancelled box}, if it is neither a rook box nor a file box, and
\begin{itemize}
\item it is lying above a rook in the same column or to the left of a rook in the same row, or
\item it is lying above a file in the same column.
\end{itemize}
\item All remaining boxes are {\em empty boxes}. 
\end{enumerate}
The {\em $q$-weight of the mixed placement} $\phi \in \mathcal{M}_{k,\ell}(B)$ is defined to be 
\begin{equation}
w_{\mu,\nu;q}(B,\phi)=\mu^{\# \mbox{rook boxes}}\nu^{\# \mbox{file boxes}} q^{\# \mbox{empty boxes}}=\mu^{k}\nu^{\ell} q^{\# \mbox{empty boxes}}.
\end{equation}

\begin{example}\label{ExampleNOOre}
In Figure~\ref{MixedPlClass}, the classes of the boxes are added to Figure~\ref{MixedPl}, implying that the $q$-weight of this mixed placement is given by $q^{8}\mu^2\nu^3$. Similar as in Example~\ref{ExampleNOWeyl}, this placement describes a particular sequence of steps to normal order the word $X^2YXY^2XYXYX^2Y$ (containing 7 letters $X$ and 6 letters $Y$): 1) $q$-commute, 2) contract weakly, 3) $q$-commute, 4) contract weakly, 5) contract weakly, 6) contract strongly, 7) $q$-commute,... Since there are 2 strong contractions and 3 weak contractions, the resulting normal ordered summand is $q^{8}\mu^2\nu^3 Y^{6-2}X^{7-3-2}=q^{8}\mu^2\nu^3 Y^{4}X^{2}$, as can be checked by direct computation. 

\begin{figure}[h]
\centering
\begin{tikzpicture}
\draw (-0.5,2) -- (3,2);
\draw (-0.5,1.5) -- (3,1.5); 
\draw (-0.5,1) -- (2,1);
\draw (-0.5,0.5) -- (1.5,0.5);
\draw (-0.5,0) -- (1,0);
\draw (-0.5,-0.5) -- (1,-0.5);
\draw (-0.5,-1) -- (0.5,-1);

\draw (-0.5,-1) -- (-0.5,2);
\draw (0,-1) -- (0,2);
\draw (0.5,-1) -- (0.5,2);
\draw (1,-0.5) -- (1,2);
\draw (1.5,0.5) -- (1.5,2); 
\draw (2,1) -- (2,2);
\draw (2.5,1.5) -- (2.5,2); 
\draw (3,1.5) -- (3,2);  

\filldraw (1.25,0.75) circle (3pt);
\filldraw (1.75,1.75) circle (3pt);
\filldraw (2.25,1.75) circle (3pt);
\filldraw (0.125,0.625) rectangle (0.375, 0.875);
\filldraw (0.625,-0.375) rectangle (0.875, -0.125);
\node at (0.25,1.75) {$\mbox{X}$};
\node at (0.75,1.75) {$\mbox{X}$};
\node at (1.25,1.75) {$\mbox{X}$};
\node at (0.25,1.25) {$\mbox{X}$};
\node at (0.75,1.25) {$\mbox{X}$};
\node at (1.25,1.25) {$\mbox{X}$};
\node at (-0.25,0.75) {$\mbox{X}$};
\node at (0.75,0.75) {$\mbox{X}$};
\node at (0.75,0.25) {$\mbox{X}$};
\node at (-0.25,-0.25) {$\mbox{X}$};
\node at (0.25,-0.25) {$\mbox{X}$};

\end{tikzpicture}
\caption{The Ferrers board from Figure~\ref{MixedPl} with boxes marked according to class.}\label{MixedPlClass}
\end{figure}
\end{example}

\begin{definition}\label{DefOREMPN}
In the above setting, the {\em $q$-deformed mixed placement numbers} are defined by 
\begin{equation}
m_{k,\ell}(B;q)\equiv m_{k,\ell}^{(\mu,\nu)}(B;q)=\sum_{\phi \in \mathcal{M}_{k,\ell}(B)} w_{\mu,\nu;q}(B,\phi)=\mu^{k}\nu^{\ell}\sum_{\phi \in \mathcal{M}_{k,\ell}(B)} q^{\# \mbox{empty boxes}}.
\end{equation}
\end{definition}
For $q=1$, one obtains $ m_{k,\ell}(B;1)=\mu^{k}\nu^{\ell} |{\mathcal M}_{k,\ell}(B)| $. Furthermore, when only one type of objects (rook or files) is present, these numbers reduce to the ones above,
\begin{equation}\label{ORERed}
m_{k,0}(B;q)=r_k(B;q), \,\, m_{0,\ell}(B;q)=f_{\ell}(B;q).
\end{equation}
For the empty board, one has $m_{0,0}(B;q)=q^{|B|}$ where $|B|$ denotes the number of boxes of the board $B$. In general, when one considers a word $\omega$ having $m$ letters $X$ and $n$ letters $Y$ and a mixed placement $\phi \in  \mathcal{M}_{k,\ell}(B_{\omega}) $, then the resulting normal ordered summand will be, up to some power of $q$, given by $\mu^k \nu^{\ell}Y^{n-k}X^{m-k-\ell}$ (see Example~\ref{ExampleNOOre}). Thus, summing over all possibilities of mixed placements, we have the following result.
\begin{theorem}\label{THNOORE} Let $\omega=Y^{n_r}X^{m_r}\cdots Y^{n_1}X^{m_1}$ (with $n_j,m_j \in \mathbb{N}_0$) be a word in the letters $X$ and $Y$ satisyfing the commutation relation \eqref{CROre} of the $q$-deformed generalized Ore algebra ${\mathcal O}_{\mu,\nu}(q)$. Then the normal ordering coefficients of $\omega$ are given by the $q$-deformed mixed placement numbers,
\begin{equation}\label{NOORE}
\omega = \sum_{k=0}^{\min (|{\bf m}|,|{\bf n}|)}\sum_{\ell=0}^{|{\bf m}|-k} m_{k,\ell}(B_{\omega};q) Y^{|{\bf n}|-k}X^{|{\bf m}|-k-\ell},
\end{equation}
where ${\bf n}=(n_r,\ldots,n_1)$ with $|{\bf n}|=n_1+\cdots+n_r$ (and, similarly, for {\bf m}). 
\end{theorem}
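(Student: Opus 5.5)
We prove \eqref{NOORE} by induction on the number of boxes $|B_{\omega}|$, i.e.\ on the number of inversions (pairs of an $X$ standing to the left of a $Y$) in $\omega$. If $|B_\omega|=0$, then $\omega=Y^{|{\bf n}|}X^{|{\bf m}|}$ is already normal ordered. The only placement is the empty one, and $m_{0,0}(B_\omega;q)=q^0=1$, so the statement holds. If $|B_\omega|>0$, $\omega$ contains a subword $XY$. Following the description preceding the theorem, we choose the rightmost such subword, which corresponds to a corner box $c$ of $B_\omega$; write $\omega=\alpha XY\beta$. By \eqref{CROre},
\[
\omega = q\,\alpha YX\beta + \mu\,\alpha\beta + \nu\,\alpha Y\beta .
\]
The three resulting words have Ferrers boards $B_0=B_\omega\setminus\{c\}$, $B_1=B_\omega$ with the row and the column of $c$ deleted, and $B_2=B_\omega$ with the column of $c$ deleted. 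Here $B_1$ loses one $X$ and one $Y$, while $B_2$ loses one $X$ (deleting a column in the path picture, up to the obvious reindexing). Each has fewer boxes, so the induction hypothesis applies to all three.

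The heart of the proof is a weight-preserving decomposition of $\mathcal{M}_{k,\ell}(B_\omega)$ according to the content of the corner box $c$.

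\emph{Case $c$ empty.} The placements with $c$ unoccupied correspond bijectively to $\mathcal{M}_{k,\ell}(B_0)$. Since $c$ is a corner, nothing lies to its right or below it. Hence $c$ can be cancelled only by a rook or file below it, or a rook to its right, and none exist. So $c$ is an empty box and contributes exactly the factor $q$, while the status of every other box is unchanged. This gives $q\,m_{k,\ell}(B_0;q)$.

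\emph{Case rook in $c$.} The placements correspond to $\mathcal{M}_{k-1,\ell}(B_1)$. The non-attacking rules forbid any other rook or file in the column of $c$, any other rook in its row, and any file to the left of the rook in its row. Since $c$ is the rightmost box of its row, this means no file at all in that row. The boxes of that row and column other than $c$ are exactly the cancelled boxes created by the rook. Removing them yields a valid placement on $B_1$ with the same number of empty boxes. This gives $\mu\, m_{k-1,\ell}(B_1;q)$.

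\emph{Case file in $c$.} The placements correspond to $\mathcal{M}_{k,\ell-1}(B_2)$. The column of $c$ contains nothing else, and the boxes above $c$ are cancelled. The boxes to the left of $c$ in its row keep their classes, because a file cancels only upward. This gives $\nu\, m_{k,\ell-1}(B_2;q)$.

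The main point to verify carefully is that deleting the row and/or column really induces a bijection of non-attacking placements. This requires that cancellation relations between the remaining boxes depend only on relative positions in the same row or column, which deletion preserves. It also requires that the deleted boxes carry no $q$-weight.

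Finally, we combine the pieces. Inserting the induction hypothesis for the three words, the monomial $Y^{|{\bf n}|-k}X^{|{\bf m}|-k-\ell}$ arises in each case as follows.
\begin{itemize}
\item From $\alpha YX\beta$ with indices $(k,\ell)$.
\item From $\alpha\beta$ with indices $(k-1,\ell)$, since $\alpha\beta$ has $|{\bf n}|-1$ letters $Y$ and $|{\bf m}|-1$ letters $X$.
\item From $\alpha Y\beta$ with indices $(k,\ell-1)$, since $\alpha Y\beta$ has $|{\bf m}|-1$ letters $X$.
\end{itemize}
Its coefficient is therefore
\[
q\,m_{k,\ell}(B_0;q)+\mu\,m_{k-1,\ell}(B_1;q)+\nu\,m_{k,\ell-1}(B_2;q),
\]
which equals $m_{k,\ell}(B_\omega;q)$ by the case decomposition. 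The summation ranges follow because $\mathcal{M}_{k,\ell}(B_\omega)=\emptyset$ unless $k\le\min(|{\bf m}|,|{\bf n}|)$ and $k+\ell\le|{\bf m}|$. Indeed, rooks and files occupy distinct columns, and rooks occupy distinct rows.
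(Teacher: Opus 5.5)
Your proof is correct and follows the same route as the paper, whose argument is the three-way choice at the rightmost corner of $B_\omega$ (leave it empty for the $q$-commutation, place a rook for $XY\rightsquigarrow\mu I$, place a file for $XY\rightsquigarrow\nu Y$), iterated and summed over all choices. You turn this into a genuine induction on $|B_\omega|$. Your check of the three cases also supplies the weight bookkeeping that the paper only asserts: the unoccupied corner is always an empty box, a rook (resp.\ file) in the corner makes every other box of its row and column (resp.\ column) cancelled, and all remaining boxes keep their class.
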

For $q=1$, we can write \eqref{NOORE} also in the following form,
\begin{equation}\label{NOORE1}
\omega = \sum_{k=0}^{\min (|{\bf m}|,|{\bf n}|)}\sum_{\ell=0}^{|{\bf m}|-k} \mu^{k}\nu^{\ell} |{\mathcal M}_{k,\ell}(B_{\omega})| Y^{|{\bf n}|-k}X^{|{\bf m}|-k-\ell}.
\end{equation}
\begin{corollary} For $\nu=0$, the $q$-deformed generalized Ore algebra reduces to the  $q$-deformed Weyl algebra, and \eqref{NOORE} reduces to \eqref{NORQWeyl}. Similarly, for $\mu=0$, the  $q$-deformed generalized Ore algebra reduces to the  $q$-deformed shift algebra, and \eqref{NOORE} reduces to \eqref{NORQShift}.
\end{corollary}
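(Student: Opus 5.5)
Since the statement is a specialization of Theorem~\ref{THNOORE}, I would prove it by setting the relevant parameter to zero in \eqref{NOORE}. No new normal ordering argument is needed; the work is to check that the mixed placement numbers collapse to the rook and file numbers.

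\emph{Case $\nu=0$.} First, \eqref{CROre} becomes $XY-qYX=\mu I$, the defining relation of the $q$-deformed Weyl algebra ${\mathcal O}_{\mu,0}(q)$. By Definition~\ref{DefOREMPN}, $m_{k,\ell}(B;q)=\mu^k\nu^\ell\sum_{\phi}q^{\#\mathrm{empty}}$. With $\nu=0$ every term with $\ell\ge 1$ vanishes, using the convention $\nu^0=1$ for $\ell=0$. The double sum in \eqref{NOORE} therefore collapses to the terms with $\ell=0$, namely $\sum_{k} m_{k,0}(B_\omega;q)\,Y^{|{\bf n}|-k}X^{|{\bf m}|-k}$. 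By \eqref{ORERed}, $m_{k,0}(B;q)=r_k(B;q)$. This identification holds because a mixed placement with no files is exactly a rook placement, and the cancelled and empty box classes agree with the rook-only definitions, so the $q$-weights $w_{\mu,\nu;q}$ and $w_{\mu;q}$ coincide. Taking $n=|{\bf n}|$ and $m=|{\bf m}|$ then gives \eqref{NORQWeyl}.

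\emph{Case $\mu=0$.} The relation becomes $XY-qYX=\nu Y$, the $q$-deformed shift algebra ${\mathcal O}_{0,\nu}(q)$. Now every term with $k\ge1$ vanishes, leaving $\sum_{\ell=0}^{|{\bf m}|} m_{0,\ell}(B_\omega;q)\,Y^{|{\bf n}|}X^{|{\bf m}|-\ell}$. By \eqref{ORERed}, $m_{0,\ell}=f_\ell$, which is again a check that without rooks the classification of boxes matches the file-only one. This yields \eqref{NORQShift}.

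The only delicate point is confirming \eqref{ORERed}, that is, that the weight conventions agree when one species is absent. This follows directly from comparing the definitions of the box classes, so I expect no real obstacle.
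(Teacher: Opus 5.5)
Your proposal is correct and matches the paper's proof: setting $\nu=0$ (resp.\ $\mu=0$) kills every summand of \eqref{NOORE} except those with $\ell=0$ (resp.\ $k=0$), and \eqref{ORERed} then identifies $m_{k,0}(B_\omega;q)=r_k(B_\omega;q)$ and $m_{0,\ell}(B_\omega;q)=f_\ell(B_\omega;q)$. Your check that the box classes, and hence the $q$-weights, agree when one species is absent is a justification of \eqref{ORERed} that the paper leaves implicit.
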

\begin{proof}
Let $\nu=0$. Due to the definition of the $q$-deformed mixed placement numbers only the summand $\ell=0$ remains. According to \eqref{ORERed}, one has $m_{k,0}(B_{\omega};q)=r_{k}(B_{\omega};q)$, hence \eqref{NOORE} reduces to
$$
\omega = \sum_{k=0}^{\min (|{\bf m}|,|{\bf n}|)}  m_{k,0}(B_{\omega};q) Y^{|{\bf n}|-k} X^{|{\bf m}|-k}=\sum_{k=0}^{\min (|{\bf m}|,|{\bf n}|)} r_{k}(B_{\omega};q) Y^{|{\bf n}|-k} X^{|{\bf m}|-k},
$$
which is exactly \eqref{NORQWeyl}. Similarly, when $\mu=0$, only the summand $k=0$ remains. According to \eqref{ORERed}, one has $m_{0,\ell}(B_{\omega};q)=f_{\ell}(B_{\omega};q)$, hence \eqref{NOORE} reduces to
$$
\omega = \sum_{\ell=0}^{|{\bf m}|}  m_{0,\ell}(B_{\omega};q) Y^{|{\bf n}|} X^{|{\bf m}|-\ell}=\sum_{\ell=0}^{|{\bf m}|} f_{\ell }(B_{\omega};q) Y^{|{\bf n}|} X^{|{\bf m}|-\ell},
$$
which is exactly \eqref{NORQShift}.
\end{proof}
Note that we can write \eqref{NOORE} also in the equivalent form
\begin{equation}\label{NOORE2}
\omega = \sum_{r=0}^{|{\bf m}|-m_1}\sum_{t=0}^{\min(|{\bf n}|, r)} m_{t,r-t}(B_{\omega};q) Y^{|{\bf n}|-t}X^{|{\bf m}|-r}.
\end{equation}

Let us consider the word $\omega=(YX)^n$ where the associated Ferrers board is the staircase board $J_n$. We also assume $q=1$. Let $ \phi \in {\mathcal M}_{k,\ell}(J_n)$ be a non-attacking placement of $k$ rooks and $\ell$ files  on $J_n$. Each rook placement diminishes the possible columns and rows for a subsequent placement of files by one unit, see Figure~\ref{StaircaseOre}. Thus, placing for $\phi $ first the $k$ rooks on $J_n$, we have $r_k(J_n)$ possibilities to do this. For each such placement of rooks, we have in the second step $\ell$ files to place on $J_{n-k}$, hence $f_{\ell}(J_{n-k})$ possibilities. Thus, we have the following result.
\begin{proposition} For the set of mixed placements on the staircase board $J_n$ one has
\begin{equation}\label{MixedPlEx}
|{\mathcal M}_{k,\ell}(J_n)| = r_k(J_n) f_{\ell}(J_{n-k}). 
\end{equation}
From \eqref{NOORE1}, one therefore obtains the normal ordering result in  ${\mathcal O}_{\mu,\nu}(1)$,
\begin{equation}\label{StirOre}
(YX)^n =  \sum_{r=0}^{n} \sum_{t=0}^{r} \mu^{n-r} \nu^{r-t} S(n,r) |s(r,t)| Y^{r} X^{t}.
\end{equation}
\end{proposition}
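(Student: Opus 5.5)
The plan is to prove \eqref{MixedPlEx} by fixing the rook part of a mixed placement. I will show that the number of admissible file placements completing it does not depend on where the $k$ rooks stand, and that this number equals $f_\ell(J_{n-k})$. Afterwards I substitute into \eqref{NOORE1} and use \eqref{StairEq}. Since $q=1$, no weights are involved, only cardinalities.

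First I fix conventions for $J_n$. Reading $(YX)^n$ as up/right steps, the columns of $J_n$ have heights $n,n-1,\ldots,1$ from left to right. In particular, all rows met by a column $c$ also meet every column to the left of $c$. Fix $\psi\in\mathcal{R}_k(J_n)$. By the definition of $\mathcal{M}_{k,\ell}$, a file may not lie in a rook's column. A file also may not lie in a box that has a rook to its right in the same row. Files in different columns do not interact, and each column holds at most one file. Hence the number of completions is
\[
\sum_{C}\prod_{c\in C} a_c ,
\]
where $C$ runs over the $\ell$-subsets of rook-free columns and $a_c$ is the number of allowed boxes in column $c$.

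The key step is computing $a_c$ for a rook-free column $c$ of height $h_c$. A box of column $c$ is forbidden exactly when its row contains a rook strictly to the right of $c$. Every rook to the right of $c$ sits in a shorter column, so its row lies within column $c$ by the Ferrers shape. Distinct rooks occupy distinct rows, so they forbid distinct boxes. Hence $a_c=h_c-\rho_c$, where $\rho_c$ is the number of rooks to the right of $c$. There are exactly $h_c-1$ columns to the right of $c$, and each of them is either a rook column or rook-free. Therefore $\rho_c=h_c-1-\epsilon_c$, with $\epsilon_c$ the number of rook-free columns to the right of $c$, and so $a_c=1+\epsilon_c$.

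Listing the $n-k$ rook-free columns from right to left, their values $a_c$ are therefore $1,2,\ldots,n-k$. These are precisely the column heights of $J_{n-k}$. The sum above is then the file number $f_\ell(J_{n-k})$, independent of $\psi$. Summing over the $r_k(J_n)$ choices of $\psi$ gives \eqref{MixedPlEx}. This column-availability computation is the only real obstacle: one has to make sure the forbidden boxes are counted correctly, with no overlaps and no rows missing from column $c$. I handle it with the Ferrers-shape observation above.

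For \eqref{StirOre}, I apply \eqref{NOORE1} with $|{\bf m}|=|{\bf n}|=n$. Set $r=n-k$ and $t=n-k-\ell$, so that $\mu^k\nu^\ell=\mu^{n-r}\nu^{r-t}$, the monomial becomes $Y^rX^t$, and the ranges become $0\le t\le r\le n$. Then \eqref{StairEq} gives $r_k(J_n)=r_{n-r}(J_n)=S(n,r)$. It also gives $f_\ell(J_{r})=f_{r-t}(J_r)=|s(r,t)|$. Inserting both into \eqref{MixedPlEx} and then into \eqref{NOORE1} yields \eqref{StirOre}.
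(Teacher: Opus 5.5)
Your argument is correct and follows the paper's route. You fix the $k$ rooks, observe that the boxes still available to files form a copy of $J_{n-k}$, multiply by $r_k(J_n)$, and then reindex \eqref{NOORE1} with $r=n-k$, $t=r-\ell$ and apply \eqref{StairEq}. Your computation $a_c=1+\epsilon_c$ is a rigorous version of the paper's informal remark that each rook removes one column and one row.

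One slip needs fixing. The board of $(YX)^n$ has column heights $n-1,\ldots,1,0$, so $|J_n|=\binom{n}{2}$, not $n,\ldots,1$. With your heights the rook numbers would be $S(n+1,\cdot)$ rather than $S(n,\cdot)$, and \eqref{StairEq} would not apply. The argument is otherwise unchanged if you work with the $n-1$ nonempty columns. Their $n-1-k$ rook-free members receive $a_c=1,\ldots,n-1-k$, which are exactly the nonempty column heights of $J_{n-k}$.
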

\begin{proof}
It only remains to show \eqref{StirOre}. From \eqref{NOORE1} on obtains, letting $r=n-k$ and then $t=r-\ell$, that 
$$
(YX)^n = \sum_{r=0}^{n} \sum_{t=0}^{r} \mu^{n-r} \nu^{r-t} r_{n-r}(J_n) f_{r-t}(J_{r})  Y^{r}X^{t}.
$$
Using \eqref{StairEq}, this shows the assertion.
\end{proof}

We define, following \cite{TMMS2023}, the {\em Ore-Stirling numbers} $S_{\mu,\nu}(n;j,k) $ as normal ordering coefficients of $(YX)^n$ in ${\mathcal O}_{\mu,\nu}(1)$,
\begin{equation}\label{OreStirlingDef}
(YX)^n = \sum_{j=0}^{n} \sum_{k=0}^{j} S_{\mu,\nu}(n;j,k) Y^{j}X^{k}.
\end{equation}
A comparison with \eqref{StirOre} shows that
\begin{equation}\label{ORESFact}
S_{\mu,\nu}(n;j,k) = \mu^{n-j} \nu^{j-k} S(n,j) |s(j,k)|.
\end{equation}
The factorization of the Ore-Stirling numbers was already derived in \cite[Theorem 3.5]{TMMS2023} in an algebraic fashion (but note that our conventions here are slightly different then the ones in \cite{TMMS2023}, switching, roughly, the role of $X$ and $Y$). We can easily derive from \eqref{ORESFact} a recurrence relation for the Ore-Stirling numbers (note that it differs from the one derived in \cite{TMMS2023} due to the different conventions).
\begin{proposition} The Ore-Stirling numbers satisfy the recurrence relation
\begin{equation}\label{ORESRec}
S_{\mu,\nu}(n+1;j,k) =S_{\mu,\nu}(n;j-1,k-1)+ \mu j S_{\mu,\nu}(n;j,k) + \nu (j-1) S_{\mu,\nu}(n;j-1,k).  
\end{equation}
\end{proposition}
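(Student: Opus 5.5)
The plan is to derive \eqref{ORESRec} purely algebraically from the factorization \eqref{ORESFact}, $S_{\mu,\nu}(n;j,k)=\mu^{n-j}\nu^{j-k}S(n,j)|s(j,k)|$. We combine it with the two classical triangular recurrences
$$
S(n+1,j)=S(n,j-1)+jS(n,j), \qquad |s(j,k)|=|s(j-1,k-1)|+(j-1)|s(j-1,k)|.
$$
We use the usual conventions: $S(0,0)=|s(0,0)|=1$, and all these numbers, as well as $S_{\mu,\nu}(n;j,k)$, vanish outside the range $0\le k\le j\le n$. This lets us state the recurrence for all $j,k$ without treating boundary cases separately.

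First, write $S_{\mu,\nu}(n+1;j,k)=\mu^{n+1-j}\nu^{j-k}S(n+1,j)|s(j,k)|$ and expand $S(n+1,j)$ with the Stirling recurrence of the second kind. This gives two terms.

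The term coming from $jS(n,j)$ is $\mu\, j\cdot \mu^{n-j}\nu^{j-k}S(n,j)|s(j,k)|$. By \eqref{ORESFact} this equals $\mu j S_{\mu,\nu}(n;j,k)$, which is the middle term of \eqref{ORESRec}.

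The term coming from $S(n,j-1)$ is $\mu^{n-(j-1)}\nu^{j-k}S(n,j-1)|s(j,k)|$. Here I expand $|s(j,k)|$ with the recurrence for the unsigned Stirling numbers of the first kind, giving two further pieces:
\begin{itemize}
\item The piece with $|s(j-1,k-1)|$ is $\mu^{n-(j-1)}\nu^{(j-1)-(k-1)}S(n,j-1)|s(j-1,k-1)|=S_{\mu,\nu}(n;j-1,k-1)$.
\item The piece with $(j-1)|s(j-1,k)|$ is $\nu(j-1)\cdot\mu^{n-(j-1)}\nu^{(j-1)-k}S(n,j-1)|s(j-1,k)|=\nu(j-1)S_{\mu,\nu}(n;j-1,k)$.
\end{itemize}
Summing the three contributions yields \eqref{ORESRec}.

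The only delicate point is bookkeeping of the exponents of $\mu$ and $\nu$. Extracting exactly one factor $\mu$ in the second term and one factor $\nu$ in the last term is what produces the stated coefficients. I would also check that the boundary cases ($j=0$, $k=0$, $j=n+1$) are consistent with the vanishing conventions.

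Alternatively, one could argue combinatorially on $J_{n+1}$ via \eqref{MixedPlEx}, by classifying the last column: it is empty, it contains a rook, or it contains a file. The algebraic route above is shorter, so I would present that.
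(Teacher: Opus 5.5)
Your proposal is correct and follows the paper's proof essentially verbatim. Like the paper, you expand $S(n+1,j)$ by the second-kind recurrence, expand $|s(j,k)|$ in the $S(n,j-1)$ term by the first-kind recurrence, and reabsorb the prefactor $\mu^{n+1-j}\nu^{j-k}$ via \eqref{ORESFact}; the combinatorial alternative you mention (classifying the adjoined column of $J_{n+1}$ as empty, holding a rook, or holding a file) is exactly how the paper later proves the $q$-deformed recurrence in Proposition~\ref{PROPQORESRec}.
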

\begin{proof} Use the recurrence relation $S(n+1,j)=S(n,j-1)+jS(n,j)$ to get $S(n+1,j) |s(j,k)|=S(n,j-1)|s(j,k)|+jS(n,j)|s(j,k)|$. Use in the first summand the recurrence relation $|s(j,k)|=|s(j-1,k-1)|+(j-1)|s(j-1,k)|$ to obtain
$$
S(n+1,j) |s(j,k)|=S(n,j-1)|s(j-1,k-1)|+(j-1)S(n,j-1) |s(j-1,k)|+jS(n,j)|s(j,k)|.
$$
Multiplying this with $\mu^{n-j+1} \nu^{j-k}$ and applying \eqref{ORESFact} yields the assertion. 
\end{proof}

We can make the combinatorial interpretation of the Ore-Stirling numbers more explicit by writing $S(n,j)|s(j,k)|=r_{n-j}(J_n)f_{j-k}(J_j)$ and using \eqref{MixedPlEx} to find
\begin{equation}\label{OREEX}
S_{\mu,\nu}(n;j,k) = \mu^{n-j} \nu^{j-k}|{\mathcal M}_{n-j,j-k}(J_n)|,
\end{equation}
i.e., the (weighted) number of non-attacking placements of $n-j$ rooks and $j-k$ files on the staircase board $J_n$. A combinatorial derivation of the recurrence relation of the $q$-deformed Ore-Stirling numbers will be given in Proposition~\ref{PROPQORESRec}.

\begin{figure}[h]
\centering
\begin{tikzpicture}

\draw (0,-1) -- (0,2);
\draw (0.5,-0.5) -- (0.5,2);
\draw (1,0) -- (1,2);   
\draw (1.5,0.5) -- (1.5,2);  
\draw (2,1) -- (2,2); 
\draw (2.5,1.5) -- (2.5,2); 
\draw (0,2) -- (3,2);
\draw (0,1.5) -- (2.5,1.5);
\draw (0,1) -- (2,1);
\draw (0,0.5) -- (1.5,0.5);
\draw (0,0) -- (1,0);
\draw (0,-0.5) -- (0.5,-0.5);

\filldraw (0.625,0.125) rectangle (0.875, 0.375);
\filldraw (1.125,1.125) rectangle (1.375, 1.375);

\node at (0.25, 1.75) {$\checkmark $};
\node at (0.25, 0.75) {$\checkmark $};
\node at (0.25,-0.25) {$\checkmark $};
\node at (1.75, 1.75) {$\checkmark $};
\node at (1.75, 1.25) {$\checkmark $};
\node at (2.25, 1.75) {$\checkmark $};

\node at (3.75,0.75) {$\rightsquigarrow$};

\draw (4.5,0.5) -- (4.5,2);
\draw (4.5,0) -- (4.5,0.5);
\draw (5,1) -- (5,2);
\draw (5,0.5) -- (5,1);
\draw (5.5,1.5) -- (5.5,2);   
\draw (5.5,1) -- (5.5,1.5);
\draw (6,1.5) -- (6,2);  
\draw (4.5,2) -- (6,2);
\draw (6,2) -- (6.5,2);
\draw (4.5,1.5) -- (5.5,1.5);
\draw (5.5,1.5) -- (6,1.5); 
\draw (4.5,1) -- (5,1);
\draw (5,1) -- (5.5,1); 
\draw (4.5,0.5) -- (5,0.5);

\end{tikzpicture}
\caption{After placing two rooks on $J_5$, the set of allowed file placements (marked with $\checkmark$ on the left) is equal to the set of file placements on $J_3$.}\label{StaircaseOre}
\end{figure}
 
 \begin{remark}
 In \cite{TMMS2023}, a combinatorial interpretation for the Ore-Stirling numbers was discussed, but an intepretation for the normal ordering coefficients of arbitrary words in $X$ and $Y$ was lacking and proposed as an open problem (see Point 5 in  \cite[Section 5]{TMMS2023}). This sought-for interpretation is given in Theorem~\ref{THNOORE} (for the $q$-deformed case).
 \end{remark}
 
We can define in analogy to \eqref{OreStirlingDef} the {\em $q$-deformed Ore-Stirling numbers} $S_{\mu,\nu;q}(n;j,k) $ as normal ordering coefficients of $(YX)^n$ in ${\mathcal O}_{\mu,\nu}(q)$, 
\begin{equation}\label{QOreStirlingDef}
(YX)^n = \sum_{j=0}^{n} \sum_{k=0}^{j} S_{\mu,\nu;q}(n;j,k) Y^{j}X^{k}.
\end{equation}
\begin{proposition}\label{PropOS} The $q$-deformed Ore-Stirling numbers are given by
\begin{equation}\label{QOREEX}
S_{\mu,\nu;q}(n;j,k)=m_{n-j,j-k}(J_n;q). 
\end{equation}
\end{proposition}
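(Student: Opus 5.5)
This follows directly from Theorem~\ref{THNOORE} applied to the word $\omega=(YX)^n$. In the notation of that theorem, $\omega=Y^{n_r}X^{m_r}\cdots Y^{n_1}X^{m_1}$ with $r=n$ and all $n_i=m_i=1$. Hence $|{\bf n}|=|{\bf m}|=n$, and the associated Ferrers board is the staircase board $B_\omega=J_n$, as noted at the beginning of this section. Then \eqref{NOORE} reads
\begin{equation*}
(YX)^n=\sum_{k'=0}^{n}\sum_{\ell'=0}^{n-k'} m_{k',\ell'}(J_n;q)\,Y^{n-k'}X^{n-k'-\ell'}.
\end{equation*}

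Next we reindex exactly as in the proof of \eqref{StirOre}. Put $j=n-k'$ and $k=j-\ell'=n-k'-\ell'$. As $k'$ runs over $0,\dots,n$, $j$ runs over $0,\dots,n$. For fixed $j$, as $\ell'$ runs over $0,\dots,n-k'=j$, $k$ runs over $0,\dots,j$. The sum therefore becomes
\begin{equation*}
\sum_{j=0}^{n}\sum_{k=0}^{j} m_{n-j,j-k}(J_n;q)\,Y^jX^k .
\end{equation*}

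Finally we compare this with the defining expansion \eqref{QOreStirlingDef} and read off coefficients. The one point that needs justification is that normal ordered expansions in ${\mathcal O}_{\mu,\nu}(q)$ are unique, i.e.\ the monomials $Y^jX^k$ are linearly independent. This is the standard PBW-type basis property of this skew polynomial (Ore extension) algebra: ${\mathcal O}_{\mu,\nu}(q)$ is an Ore extension of $\mathbb{C}[Y]$ by $X$, with automorphism $Y\mapsto qY$ and derivation $Y\mapsto \mu+\nu Y$, so $\{Y^jX^k\}$ is a basis. Implicitly the definition \eqref{QOreStirlingDef} already presupposes this, so it can be invoked rather than reproved, and \eqref{QOREEX} follows.

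I expect no genuine obstacle here. The content lies in Theorem~\ref{THNOORE}, and the only care needed is to keep the index ranges consistent during the substitution, including the boundary cases $k'=0$ and $\ell'=0$.
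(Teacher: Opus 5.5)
Your proposal is correct and matches the paper's proof: apply Theorem~\ref{THNOORE} to $(YX)^n$ with $B_\omega=J_n$, substitute $j=n-k'$ and $k=j-\ell'$ (the paper's $r=n-k$, $t=r-\ell$), and compare with \eqref{QOreStirlingDef}. Your remark that the monomials $Y^jX^k$ form a basis, which the paper uses tacitly, correctly justifies reading off coefficients; for $q=0$ the map $Y\mapsto qY$ is only an endomorphism, but the Ore-extension construction or the diamond lemma still gives that basis.
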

\begin{proof}
Switching the variables as $r=n-k$ and $t=r-\ell$, we obtain from \eqref{NOORE} that
$$
(YX)^n = \sum_{r=0}^{n}\sum_{t=0}^{r} m_{n-r,r-t}(J_n;q) Y^{r}X^{t}.
$$
A comparison with the definition of $S_{\mu,\nu;q}(n;j,k)$ in \eqref{QOreStirlingDef} yields the assertion.
\end{proof}
For $q=1$, \eqref{QOREEX} reduces to  \eqref{OREEX} since $m_{n-j,j-k}(J_n;1)=\mu^{n-j}\nu^{j-k}|{\mathcal M}_{n-j,j-k}(J_n)|$. Let us point out that Briand \cite[Theorem 3.1]{Briand2025} showed that normal ordering $(UD)^n$ when $DU=qUD+I+D$ can be described in terms of placements of rooks and ``lances'' on $J_n$.

\begin{example}
Let $n=3$. Using the commutation relation \eqref{CROre}, one can show that in ${\mathcal O}_{\mu,\nu}(q)$
$$
(YX)^3=q^3Y^3X^3+(q+2q^2)\nu Y^3X^2+(1+q)\nu^2 Y^3X+(2q+q^2)\mu Y^2X^2+(2+q)\mu \nu Y^2X+\mu^2 YX.
$$ 
There are 13 summands. Each summand corresponds to a non-attacking placement of rooks and files on $J_3$.  The empty placement corresponding to $m_{0,0}(J_3;q)$ has weight $q^3$. The remaining 12 placements are shown in Figure~\ref{MixedFig} with their weights. For example, the coefficient of $Y^2X$ is given by $S_{\mu,\nu;q}(3;2,1)=m_{1,1}(J_3;q)=\mu\nu+q\mu\nu+\mu\nu=(2+q)\mu\nu$ (the last 3 diagrams of the second row).

\begin{figure}[h]
\centering
\begin{tikzpicture}

\draw (0,0.5) -- (0,2);
\draw (0.5,1) -- (0.5,2);
\draw (1,1.5) -- (1,2);
\draw (0,2) -- (1.5,2);
\draw (0,1.5) -- (1,1.5);
\draw (0,1) -- (0.5,1);
\filldraw (0.625,1.625) rectangle (0.875, 1.875);
\node at (0.25,01.75) {$\mbox{X}$};
\node at (0.75,0.25) {$q\mu$};

\draw (2.5,0.5) -- (2.5,2);
\draw (3,1) -- (3,2);
\draw (3.5,1.5) -- (3.5,2);
\draw (2.5,2) -- (4,2);
\draw (2.5,1.5) -- (3.5,1.5);
\draw (2.5,1) -- (3,1);
\filldraw (2.625,1.125) rectangle (2.875, 1.375);
\node at (2.75,01.75) {$\mbox{X}$};
\node at (3.25,0.25) {$q\mu$};

\draw (5,0.5) -- (5,2);
\draw (5.5,1) -- (5.5,2);
\draw (6,1.5) -- (6,2);
\draw (5,2) -- (6.5,2);
\draw (5,1.5) -- (6,1.5);
\draw (5,1) -- (5.5,1);
\filldraw (5.25,1.75) circle (1.5pt);
\filldraw (5.125,1.625) rectangle (5.375, 1.875);
\node at (5.75,0.25) {$ q^2 \mu$};

\draw (7.5,0.5) -- (7.5,2);
\draw (8,1) -- (8,2);
\draw (8.5,1.5) -- (8.5,2);
\draw (7.5,2) -- (9,2);
\draw (7.5,1.5) -- (8.5,1.5);
\draw (7.5,1) -- (8,1);
\filldraw (8.25,1.75) circle (3pt);
\node at (8.25,0.25) {$ q^2\nu$};

\draw (10,0.5) -- (10,2);
\draw (10.5,1) -- (10.5,2);
\draw (11,1.5) -- (11,2);
\draw (10,2) -- (11.5,2);
\draw (10,1.5) -- (11,1.5);
\draw (10,1) -- (10.5,1);
\filldraw (10.25,1.25) circle (3pt);
\node at (10.25,01.75) {$\mbox{X}$};
\node at (10.75,0.25) {$ q\nu$};

\draw (12.5,0.5) -- (12.5,2);
\draw (13,1) -- (13,2);
\draw (13.5,1.5) -- (13.5,2);
\draw (12.5,2) -- (14,2);
\draw (12.5,1.5) -- (13.5,1.5);
\draw (12.5,1) -- (13,1);
\filldraw (12.75,1.75) circle (3pt);
\node at (13.25,0.25) {$ q^2\nu $};

Second row
\draw (0,-2) -- (0,-0.5);
\draw (0.5,-1.5) -- (0.5,-0.5);
\draw (1,-1) -- (1,-0.5);
\draw (0,-0.5) -- (1.5,-0.5);
\draw (0,-1) -- (1,-1);
\draw (0,-1.5) -- (0.5,-1.5);
\filldraw (0.75,-0.75) circle (1.5pt);
\filldraw (0.25,-1.25) circle (1.5pt);
\filldraw (0.625,-0.875) rectangle (0.875, -0.625);
\filldraw (0.125,-1.375) rectangle (0.375, -1.125);
\node at (0.25,-0.75) {$\mbox{X}$};
\node at (0.75,-2.25) {$\mu^2$};

\draw (2.5,-2) -- (2.5,-0.5);
\draw (3,-1.5) -- (3,-0.5);
\draw (3.5,-1) -- (3.5,-0.5);
\draw (2.5,-0.5) -- (4,-0.5);
\draw (2.5,-1) -- (3.5,-1);
\draw (2.5,-1.5) -- (3,-1.5);
\filldraw (2.75,-1.25) circle (3pt);
\filldraw (3.25,-0.75) circle (3pt);
\node at (2.75,-0.75) {$\mbox{X}$};
\node at (3.25,-2.25) {$\nu^2$};

\draw (5,-2) -- (5,-0.5);
\draw (5.5,-1.5) -- (5.5,-0.5);
\draw (6,-1) -- (6,-0.5);
\draw (5,-0.5) -- (6.5,-0.5);
\draw (5,-1) -- (6,-1);
\draw (5,-1.5) -- (5.5,-1.5);
\filldraw (5.25,-0.75) circle (3pt);
\filldraw (5.75,-0.75) circle (3pt);
\node at (5.75,-2.25) {$ q \nu^2 $};

\draw (7.5,-2) -- (7.5,-0.5);
\draw (8,-1.5) -- (8,-0.5);
\draw (8.5,-1) -- (8.5,-0.5);
\draw (7.5,-0.5) -- (9,-0.5);
\draw (7.5,-1) -- (8.5,-1);
\draw (7.5,-1.5) -- (8,-1.5);
\filldraw (8.25,-0.75) circle (3pt);
\filldraw (7.625,-1.375) rectangle (7.875, -1.125);
\node at (7.75,-0.75) {$\mbox{X}$};
\node at (8.25,-2.25) {$\mu \nu$};

\draw (10,-2) -- (10,-0.5);
\draw (10.5,-1.5) -- (10.5,-0.5);
\draw (11,-1) -- (11,-0.5);
\draw (10,-0.5) -- (11.5,-0.5);
\draw (10,-1) -- (11,-1);
\draw (10,-1.5) -- (10.5,-1.5);
\filldraw (10.75,-0.75) circle (3pt);
\filldraw (10.125,-0.875) rectangle (10.375, -0.625);
\node at (10.75,-2.25) {$q\mu \nu$};

\draw (12.5,-2) -- (12.5,-0.5);
\draw (13,-1.5) -- (13,-0.5);
\draw (13.5,-1) -- (13.5,-0.5);
\draw (12.5,-0.5) -- (14,-0.5);
\draw (12.5,-1) -- (13.5,-1);
\draw (12.5,-1.5) -- (13,-1.5);
\filldraw (12.75,-1.25) circle (3pt);
\filldraw (13.125,-0.875) rectangle (13.375, -0.625);
\node at (12.75,-0.75) {$\mbox{X}$};
\node at (13.25,-2.25) {$\mu \nu$};

\end{tikzpicture}
\caption{All nontrivial non-attacking mixed placements of rooks and files on $J_3$.}\label{MixedFig}
\end{figure}
\end{example}
A few values of $S_{\mu,\nu;q}(n;j,k)$ are easy to determine:
\begin{itemize}
\item $S_{\mu,\nu;q}(n;n,n)=m_{0,0}(J_n;q)=q^{|J_n|}$, giving the summand $q^{\binom{n}{2}} Y^nX^n$,
\item $S_{\mu,\nu;q}(n;1,1)=m_{n-1,0}(J_n;q)=\mu^{n-1}$, giving the summand $\mu^{n-1} YX$,
\item $S_{\mu,\nu;q}(n;n,n-\ell)=m_{0,\ell}(J_n;q)=f_{\ell}(J_n;q)$, giving the summand $f_{\ell}(J_n;q)Y^nX^{n-\ell}$, 
\item $S_{\mu,\nu;q}(n;n-k,n-k)=m_{k,0}(J_n;q)=r_{k}(J_n;q)$, giving the summand $r_{k}(J_n;q)Y^{n-k}X^{n-k}$.
\end{itemize}
Recall that the {\em $q$-deformed numbers} are defined by $[n]_q=1+q+\cdots + q^{n-1}=\frac{1-q^n}{1-q}$. Furthermore, 
$$
[n]_q!=[n]_q [n-1]_q\cdots [2]_q[1]_q, \, \,\,\,\, \binom{m}{k}_q =\frac{[n]_q!}{[n-k]_q![k]_q!}.
$$ 
For the $q$-deformed Ore-Stirling numbers one has the following analogue of \eqref{ORESRec} (and to which it reduces for $q=1$).
\begin{proposition}\label{PROPQORESRec} The $q$-deformed Ore-Stirling numbers satisfy the recurrence relation
\begin{equation}\label{QORESRec}
S_{\mu,\nu;q}(n+1;j,k) =q^{j-1}S_{\mu,\nu;q}(n;j-1,k-1)+\mu [j]_q S_{\mu,\nu;q}(n;j,k) +\nu [j-1]_q S_{\mu,\nu;q}(n;j-1,k).  
\end{equation}
\end{proposition}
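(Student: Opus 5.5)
The plan is to prove \eqref{QORESRec} algebraically, by multiplying the defining expansion \eqref{QOreStirlingDef} on the \emph{left} by $YX$. Writing $(YX)^{n+1}=YX\,(YX)^n$ gives
$$
(YX)^{n+1}=\sum_{j=0}^{n}\sum_{k=0}^{j} S_{\mu,\nu;q}(n;j,k)\, Y\,(XY^j)\,X^k .
$$
So everything reduces to normal ordering the single expression $XY^j$ in ${\mathcal O}_{\mu,\nu}(q)$. Multiplying on the left rather than the right is the key choice: it only requires moving one $X$ past a block of $Y$'s. Multiplying on the right would require moving $X^k$ past $Y$, which gives a recursion in $k$ rather than in $j$.

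The first step is the auxiliary identity
$$
XY^j=q^jY^jX+\mu[j]_qY^{j-1}+\nu[j]_qY^j,
$$
proved by induction on $j$. The case $j=0$ is trivial. For the inductive step I write $XY^j=(XY)Y^{j-1}=(qYX+\mu I+\nu Y)Y^{j-1}$ and apply the hypothesis to $XY^{j-1}$. Set $XY^{j}=q^jY^jX+a_jY^{j-1}+b_jY^j$. The coefficients then satisfy $a_j=qa_{j-1}+\mu$ and $b_j=qb_{j-1}+\nu$, with $a_0=b_0=0$, so $a_j=\mu[j]_q$ and $b_j=\nu[j]_q$.

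Substituting this identity back in gives
$$
Y XY^jX^k=q^jY^{j+1}X^{k+1}+\mu[j]_qY^jX^k+\nu[j]_qY^{j+1}X^k .
$$
I then collect the coefficient of $Y^jX^k$ in $(YX)^{n+1}$ by reindexing each of the three terms:
\begin{itemize}
\item the first term, with $(j,k)\to(j-1,k-1)$, contributes $q^{j-1}S_{\mu,\nu;q}(n;j-1,k-1)$;
\item the second term contributes $\mu[j]_qS_{\mu,\nu;q}(n;j,k)$;
\item the third term, with $j\to j-1$, contributes $\nu[j-1]_qS_{\mu,\nu;q}(n;j-1,k)$.
\end{itemize}
Comparing with \eqref{QOreStirlingDef} for $n+1$ gives \eqref{QORESRec}. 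For this comparison I adopt the convention that $S_{\mu,\nu;q}$ vanishes outside the range $0\le k\le j\le n$. The normal ordered monomials $Y^jX^k$ are linearly independent (a PBW-type basis), so comparing coefficients is legitimate; the paper uses this implicitly throughout.

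The only real obstacle is bookkeeping: getting the $q$-powers and the $[j]_q$ versus $[j-1]_q$ indices right after the shifts. Since the section emphasizes combinatorics, I would add a remark translating the argument into boards. $J_{n+1}$ arises from $J_n$ by adjoining a new top row of length $n$, which corresponds to the leftmost $YX$. Relative to the $j$ free columns, this new row either stays empty, contributing $q^{j-1}$ through its empty boxes; or receives a rook in one of $j$ admissible positions, with weights $1,q,\dots,q^{j-1}$; or receives a file in one of $j-1$ positions, with weights giving $[j-1]_q$. As a consistency check, the recurrence reduces to \eqref{ORESRec} at $q=1$.
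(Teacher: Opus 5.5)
\paragraph{A correct proof by a different route.} Your algebraic argument is correct, and it differs from the paper's proof. The paper argues combinatorially. It starts from $S_{\mu,\nu;q}(n;j,k)=m_{n-j,j-k}(J_n;q)$ (Proposition~\ref{PropOS}) and writes $J_{n+1}=C_n\oplus J_n$, with $C_n$ a new leftmost column of height $n$. It then sorts placements by the content of $C_n$:
\begin{itemize}
\item $C_n$ is empty: the $n+1-j$ rooks in $J_n$ each cancel one box of $C_n$, leaving $j-1$ empty boxes and hence $q^{j-1}$;
\item $C_n$ holds a rook: there are $j$ admissible rows, with weights $1,q,\dots,q^{j-1}$;
\item $C_n$ holds a file: there are $j-1$ admissible rows, since a file may not lie to the left of a rook.
\end{itemize}
Your identity $XY^j=q^jY^jX+\mu[j]_qY^{j-1}+\nu[j]_qY^j$ is the algebraic shadow of this column, because the $X$ of the leftmost $YX$ must pass the $j$ surviving $Y$'s.

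Your version is independent of Theorem~\ref{THNOORE}, which the paper justifies only through the informal description of the normal ordering process. You need only the defining relation and the basis $\{Y^jX^k\}$. It also generalizes immediately: in ${\mathcal A}_f(q)$ one gets $XY^j=q^jY^jX+[j]_qY^{j-1}f(Y)$, which gives \eqref{RECPOLY} with no further work. The paper's version, in turn, gives each term a combinatorial meaning. The same column decomposition is reused for mixed placement numbers on arbitrary Ferrers boards in Theorem~\ref{RecurrenceMain}.

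\paragraph{The closing board remark is wrong.} With the paper's conventions, the leftmost $YX$ contributes the new leftmost column $C_n$ of height $n$. Its $X$ lies to the left of all $n$ other $Y$'s, while its $Y$ has no $X$ to its left, so its row is empty. A new top row of length $n$ is what the rightmost $YX$ contributes, i.e.\ right multiplication.

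A row can carry several files. For example, the placement of weight $q\nu^2$ on $J_3$ in Figure~\ref{MixedFig} has two files in the top row. So the trichotomy ``empty / one rook in $j$ places / one file in $j-1$ places'' fails for a row. This matches your own observation that right multiplication leads to a messier recursion. The trichotomy does hold for the column $C_n$, and there the count is of free rows (rows of $J_n$ without a rook), not free columns. Replace ``top row'' by ``leftmost column'' and ``free columns'' by ``free rows'', and the remark matches the paper's proof.
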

\begin{proof} We give a combinatorial proof using \eqref{QOREEX} and the definition of the $m_{r,s}(B;q)$,
$$
S_{\mu,\nu;q}(n+1;j,k)=m_{n+1-j,j-k}(J_{n+1};q)=\mu^{n+1-j}\nu^{j-k}\sum_{\phi \in \mathcal{M}_{n+1-j,j-k}(J_{n+1})} q^{\# \mbox{empty boxes}}.
$$ 
Thus, we have to consider all mixed placements $\phi \in \mathcal{M}_{n+1-j,j-k}(J_{n+1})$. We interprete the staircase board $J_{n+1}$ as staircase board $J_n$ where we have adjoined a column with $n$ boxes to the left. Let us denote this column for brevity by $C_n$ such that we write in an informal fashion $J_{n+1}=C_n \oplus J_n$. Then there are 3 types of placements of $(n+1-j)$ rooks and $(j-k)$ files on $J_{n+1}$:
\begin{itemize}
\item Type $I$: Place all rooks and files on $J_n$ and leave $C_n$ empty.
\item Type $II$: Place $(n-j)$ rooks and $(j-k)$ files on $J_{n}$ and one rook in $C_n$.
\item Type $III$: Place $(n+1-j)$ rooks and $(j-k-1)$ files on $J_{n}$ and one file in $C_n$.
\end{itemize} 
Thus, we can write
$$
\mathcal{M}_{n+1-j,j-k}(J_{n+1})=\mathcal{M}_{n+1-j,j-k}^{I}(J_{n+1})\cup\mathcal{M}_{n+1-j,j-k}^{II}(J_{n+1})\cup  \mathcal{M}_{n+1-j,j-k}^{III}(J_{n+1}).
$$
Let us start with placements of type $I$. Each such placement  $\phi  \in \mathcal{M}_{n+1-j,j-k}^{I}(J_{n+1})$ corresponds to a placement $\phi' \in \mathcal{M}_{n-(j-1),(j-1)-(k-1)}(J_{n})$. If $\phi'$ has weight $\omega_{\mu,\nu;q}(J_n,\phi')$ then $\omega_{\mu,\nu;q}(J_{n+1},\phi)=q^{j-1}\omega_{\mu,\nu;q}(J_n,\phi')$ since in the column $C_n$ there are additional $n-(n+1-j)=j-1$ empty boxes. Thus, the contribution of all placements of type $I$ is given by
$$
\mu^{n+1-j}\nu^{j-k}q^{j-1}\sum_{\phi' \in \mathcal{M}_{n-(j-1),(j-1)-(k-1)}(J_{n})} q^{\# \mbox{empty boxes}} =q^{j-1}S_{\mu,\nu;q}(n;j-1,k-1).
$$
Let us turn to placements of type $II$. Starting from a placement $\phi' $ of $(n-j)$ rooks and $(j-k)$ files on $J_{n}$, i.e., 
$\phi' \in \mathcal{M}_{n-j,j-k}(J_{n})$ we obtain $j$ placements $\phi$ of tpye $II$ on $J_{n+1}$ since there are $n-(n-j)=j$ possible boxes to place the rook in $C_n$. Placing the rook in the $r$-th possible row from above (where $r=1,\ldots,j$) will result in $j-r$ additional empty boxes. Thus, the sum of the weights of these $j$ placements $\phi$ is given by $\mu (1+q+\cdots+q^{j-1})\omega_{\mu,\nu;q}(J_n,\phi')=\mu [j]_q \omega_{\mu,\nu;q}(J_n,\phi')$. The contribution of all placements of type $II$ is, therefore, given by
$$
\mu^{n+1-j}\nu^{j-k}\mu [j]_q \sum_{\phi' \in \mathcal{M}_{n-j,j-k}(J_{n})} q^{\# \mbox{empty boxes}} = \mu [j]_q S_{\mu,\nu;q}(n;j,k).
$$
For placements of type $III$ the argument is the same as for those of type $II$. Here we have only $(j-1)$ possible boxes in $C_n$ to place the file. Thus, for each placement $\phi' \in \mathcal{M}_{n-(j-1),(j-1)-k}(J_{n})$ we obtain $(j-1)$ placements $\phi$ of tpye $III$, and the sum of the weights of these $(j-1)$ placements $\phi$ is given by $\nu [j-1]_q \omega_{\mu,\nu;q}(J_n,\phi')$. Thus, the contribution of all placements of type $III$ is given by
$$
\mu^{n+1-j}\nu^{j-k} \nu [j-1]_q \sum_{\phi' \in \mathcal{M}_{n-(j-1),(j-1)-k}(J_{n})} q^{\# \mbox{empty boxes}} = \nu [j-1]_q S_{\mu,\nu;q}(n;j-1,k).
$$
 Summing the contributions of placements of type $I,II$ and $III$ gives the assertion.
\end{proof}

In the Weyl algebra ${\mathcal W}={\mathcal O}_{1,0}(1)$, generalized Stirling numbers $S_{r,s}(n,k)$ have been introduced as normal ordering coefficients of $(Y^rX^s)^n$ (see \cite{TMMS2016} for the remarkable history of these numbers starting with Carlitz \cite{LC1930,LC1932} and McCoy \cite{Coy1934} in the early 1930s and \cite{BLFL2011} for many combinatorial interpretations). In the shift algebra ${\mathcal S}={\mathcal O}_{0,1}(1)$, analogous numbers $\sigma_{r,s}(n,k)$ were introduced as normal ordering coefficients of $(Y^rX^s)^n$ \cite{MS2022}. Thus, it is natural to define {\em $q$-deformed generalized Ore-Stirling numbers} $S_{\mu,\nu;q}^{r,s}(n;j,k)$ as normal ordering coefficients of  $(Y^rX^s)^n$ in ${\mathcal O}_{\mu,\nu}(q)$. We specialize to $r=2,s=1$, since it is well known that $S_{2,1}(n,k)=L(n,k)$, the (unsigned) Lah numbers (see \cite{TMMS2016} and the references given therein). Thus, one has 
\begin{equation}\label{Lahrook}
(Y^2X)^n=\sum_{k=1}^n L(n,k) Y^{n+k}X^k=\sum_{k=1}^n r_{n-k}({\mathcal L}_n) Y^{n+k}X^k
\end{equation}
where we used in the second equation \eqref{NORWeyl} and ${\mathcal L}_n$ denotes the {\em Lah board} associated to the word $(Y^2X)^n$ (see Figure~\ref{Laguerre} for ${\mathcal L}_3$ where the outlining path is drawn fat). In general, the {\em $m$-jump board} $J_{n,m}$ has column heights $(m(n-1),m(n-2),\ldots,m,0)$ \cite{JGJH2000} and, for $m=1$, one obtains the staircase board, $J_{n,1}=J_n$. For $m=2$, we have $J_{n,2}={\mathcal L}_n$ and we coined it for obvious reasons Lah board. Thus, from \eqref{Lahrook} we obtain the well known relation $L(n,k)= r_{n-k}({\mathcal L}_n)$. 

\begin{figure}[h]
\centering
\begin{tikzpicture}
\draw (0,1) -- (0,3);
\draw[very thick] (0,0) -- (0,1);
\draw (0.5,2) -- (0.5,3);
\draw[very thick] (0.5,1) -- (0.5,2);
\draw[very thick] (1,2) -- (1,3);

\draw (0,3) -- (1,3);
\draw[very thick] (1,3) -- (1.5,3);
\draw (0,2.5) -- (1,2.5);
\draw (0,2) -- (0.5,2);
\draw[very thick] (0.5,2) -- (1,2); 
\draw (0,1.5) -- (0.5,1.5);
\draw[very thick] (0,1) -- (0.5,1);
\end{tikzpicture}
\caption{The Lah board ${\mathcal L}_3$ associated to $(Y^2X)^3$.}\label{Laguerre}
\end{figure}
We define the {\em $q$-deformed Ore-Lah numbers} $L_{\mu,\nu;q}(n;j,k)$ by analogy to \eqref{QOreStirlingDef},
\begin{equation}\label{QOreLahDef}
(Y^2X)^n = \sum_{j=0}^{2n} \sum_{k=0}^{j} L_{\mu,\nu;q}(n;j,k) Y^{j}X^{k}.
\end{equation}

In analogy to Proposition~\ref{PropOS}, we have the following result.
\begin{proposition} The $q$-deformed Ore-Lah numbers are given by
\begin{equation}\label{QORELAHEX}
L_{\mu,\nu;q}(n;j,k)=m_{2n-j,j-n-k}({\mathcal L}_n;q). 
\end{equation}
\end{proposition}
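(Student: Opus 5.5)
The plan is to mirror the proof of Proposition~\ref{PropOS} and specialize Theorem~\ref{THNOORE} to the word $\omega=(Y^2X)^n$. In the notation of Theorem~\ref{THNOORE}, we write $\omega=Y^{n_r}X^{m_r}\cdots Y^{n_1}X^{m_1}$ with $r=n$ and $n_i=2$, $m_i=1$. This gives $|{\bf n}|=2n$ and $|{\bf m}|=n$. The associated Ferrers board is $B_\omega={\mathcal L}_n$, the Lah board $J_{n,2}$, which is independent of the commutation relation. Theorem~\ref{THNOORE} then yields
$$
(Y^2X)^n=\sum_{k'=0}^{n}\sum_{\ell=0}^{n-k'} m_{k',\ell}({\mathcal L}_n;q)\, Y^{2n-k'}X^{n-k'-\ell},
$$
where the upper limit $\min(|{\bf m}|,|{\bf n}|)=n$.

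Next we reindex to match \eqref{QOreLahDef}. Put $j=2n-k'$ for the power of $Y$ and $k=n-k'-\ell$ for the power of $X$. Then $k'=2n-j$, and $\ell=n-k'-k=n-(2n-j)-k=j-n-k$. Substituting turns the coefficient of $Y^jX^k$ into $m_{2n-j,\,j-n-k}({\mathcal L}_n;q)$.

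Finally we compare coefficients with \eqref{QOreLahDef}, since the normal ordered form is unique (the monomials $Y^jX^k$ are a basis, as is standard for such Ore extensions). The only point needing care is the index ranges. The range $k'\in[0,n]$ becomes $j\in[n,2n]$, and $\ell\in[0,n-k']$ becomes $k\in[0,j-n]$. For the remaining pairs $(j,k)$ in the range of \eqref{QOreLahDef}, namely $j<n$ or $k>j-n$, the coefficient $L_{\mu,\nu;q}(n;j,k)$ vanishes. So we adopt the convention that $m_{a,b}=0$ whenever $a>n$ or $b<0$; this is automatic because ${\mathcal M}_{a,b}$ is empty or undefined for such indices. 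With this convention \eqref{QORELAHEX} holds for all $(j,k)$.

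This bookkeeping of index ranges is the only mild obstacle; there is no combinatorial difficulty.
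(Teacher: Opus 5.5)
Your proposal is correct and matches the paper's route: the paper states this proposition ``in analogy to Proposition~\ref{PropOS}'', i.e., it specializes Theorem~\ref{THNOORE} to $(Y^2X)^n$ with $|{\bf n}|=2n$, $|{\bf m}|=n$, reindexes via $j=2n-k'$, $k=n-k'-\ell$, and compares with \eqref{QOreLahDef}, exactly as you do. Your additional remarks are correct and only make explicit what the paper leaves implicit: the vanishing of the coefficients for $k>j-n$ (where the second index of $m_{2n-j,j-n-k}$ is negative), and the uniqueness of the normal ordered form via the basis $\{Y^jX^k\}$ of the Ore extension.
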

For $\nu =0$, only the term with $j-n-k=0$ remains, thus only $L_{\mu,0;q}(n;n+k,k)=m_{n-k,0}({\mathcal L}_n;q)=r_{n-k}({\mathcal L}_n;q)$, see \eqref{ORERed}. Furthermore, letting $q=1$ and $\mu=1$, one finds $L_{1,0;1}(n;n+k,k)=r_{n-k}({\mathcal L}_n)=L(n,k)$, as expected. On the other hand, for $\mu=0$, only the term with $j=2n$ remains, thus only $L_{0,\nu;q}(n;2n,k)=m_{0,n-k}({\mathcal L}_n;q)=f_{n-k}({\mathcal L}_n;q)$, see \eqref{ORERed}.

\begin{proposition}\label{OreLahRec} The $q$-deformed Ore-Lah numbers satisfy the recurrence relation
$$
L_{\mu,\nu;q}(n+1;j,k) =q^{j-2}L_{\mu,\nu;q}(n;j-2,k-1)+\mu [j-1]_q L_{\mu,\nu;q}(n;j-1,k) +\nu [j-2]_q L_{\mu,\nu;q}(n;j-2,k).  
$$
\end{proposition}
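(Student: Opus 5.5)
The plan is to copy the combinatorial proof of Proposition~\ref{PROPQORESRec}, replacing the staircase board by the Lah board. By \eqref{QORELAHEX} we have
$$
L_{\mu,\nu;q}(n+1;j,k)=m_{2n+2-j,\,j-n-1-k}({\mathcal L}_{n+1};q).
$$
So it suffices to split the mixed placements on ${\mathcal L}_{n+1}$ into three types and identify the weighted sum of each type.

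\textbf{Decomposition of the board.} The column heights of ${\mathcal L}_{n+1}=J_{n+1,2}$ are $(2n,2n-2,\ldots,2,0)$. Hence ${\mathcal L}_{n+1}=C_{2n}\oplus{\mathcal L}_n$, where $C_{2n}$ is a column of $2n$ boxes adjoined on the left. Every row of ${\mathcal L}_n$ meets $C_{2n}$, and $C_{2n}$ has two further rows at the bottom.

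\textbf{Two facts used throughout.} Since $C_{2n}$ is the leftmost column, a rook or file placed in $C_{2n}$ creates no cancelled boxes in ${\mathcal L}_n$. Cancellation only acts upward and to the left, so the weight of the part of the placement on ${\mathcal L}_n$ is unchanged. Conversely, each rook on ${\mathcal L}_n$ cancels exactly one box of $C_{2n}$, namely the one in its row. Files on ${\mathcal L}_n$ cancel nothing in $C_{2n}$.

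\textbf{The three types.} Mixed placements $\phi\in\mathcal{M}_{2n+2-j,\,j-n-1-k}({\mathcal L}_{n+1})$ split as follows.
\begin{itemize}
\item Type $I$: $C_{2n}$ is empty. All $2n+2-j$ rooks and $j-n-1-k$ files lie on ${\mathcal L}_n$. Writing these counts as $2n-j'$ and $j'-n-k'$ gives $j'=j-2$ and $k'=k-1$. In $C_{2n}$ exactly $2n-(2n+2-j)=j-2$ boxes remain empty. Summing gives $q^{j-2}L_{\mu,\nu;q}(n;j-2,k-1)$.
\item Type $II$: one rook lies in $C_{2n}$. Then ${\mathcal L}_n$ carries $2n+1-j$ rooks and $j-n-1-k$ files, so $j'=j-1$ and $k'=k$. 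The rook in $C_{2n}$ must avoid the $2n+1-j$ rows occupied by rooks, leaving $j-1$ admissible boxes. Putting it in the $r$-th admissible box from the top cancels the boxes above it and leaves $j-1-r$ empty boxes below it. Summing over $r$ gives the factor $\mu[j-1]_q$, hence the term $\mu[j-1]_qL_{\mu,\nu;q}(n;j-1,k)$.
\item Type $III$: one file lies in $C_{2n}$. Then ${\mathcal L}_n$ carries $2n+2-j$ rooks and $j-n-2-k$ files, so $j'=j-2$ and $k'=k$. The file must not lie to the left of a rook in the same row, so $2n-(2n+2-j)=j-2$ boxes of $C_{2n}$ are admissible. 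The same $q$-count gives $\nu[j-2]_qL_{\mu,\nu;q}(n;j-2,k)$.
\end{itemize}
Adding the three contributions yields the stated recurrence.

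\textbf{Main obstacle.} The delicate step is the bookkeeping in $C_{2n}$: checking that the admissible boxes, and likewise the empty boxes in Type $I$, are exactly the rows of $C_{2n}$ not occupied by a rook on ${\mathcal L}_n$, and that they number $j-2$, $j-1$ and $j-2$ respectively. This depends on all rows of ${\mathcal L}_n$ lying inside the height of $C_{2n}$ and on the rook/file interaction rule. I would also check the powers $\mu^{2n+2-j}\nu^{j-n-1-k}$ against the definitions of the three reduced numbers, as in the proof of Proposition~\ref{PROPQORESRec}.
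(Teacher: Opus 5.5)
Your proposal is correct and follows essentially the same route as the paper. Both write ${\mathcal L}_{n+1}=C_{2n}\oplus{\mathcal L}_n$ and sort the placements by whether $C_{2n}$ is empty, holds a rook, or holds a file, which gives $j-2$, $j-1$ and $j-2$ relevant boxes respectively. You add a justification the paper leaves implicit: each rook on ${\mathcal L}_n$ cancels exactly one box of $C_{2n}$, files on ${\mathcal L}_n$ cancel none there, and an object in $C_{2n}$ cancels nothing in ${\mathcal L}_n$.
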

\begin{proof}
The proof is very similar to the one for Proposition~\ref{PROPQORESRec}. Note that ${\mathcal L}_{n+1}=C_{2n}\oplus {\mathcal L}_n$ (see Figure~\ref{Laguerre} for $n=2$). From \eqref{QORELAHEX}, we have to consider $L_{\mu,\nu;q}(n+1;j,k)=m_{2(n+1)-j,j-(n+1)-k}({\mathcal L}_{n+1};q)$. For a placement of Type $I$, neither a rook nor a file is placed in $C_{2n}$ and there are $2n-(2n+2-j)=j-2$ empty boxes in $C_{2n}$. Thus, the contribution of all placements of type $I$ is
$$
q^{j-2}m_{2n-(j-2),(j-2)-n-(k-1)}({\mathcal L}_{n};q)=q^{j-2}L_{\mu,\nu;q}(n;j-2,k-1).
$$
For a placement of Type $II$, a rook is placed in $C_{2n}$ and the remaining rooks and files are placed in ${\mathcal L}_{n}$.  In $C_{2n}$, there are $j-1$ boxes to place the rook. The contribution of all placements of type $II$ is
$$
\mu [j-1]_q m_{2n-(j-1),(j-1)-n-k}({\mathcal L}_{n};q)=\mu [j-1]_q L_{\mu,\nu;q}(n;j-1,k).
$$ 
For a placement of Type $III$, a file is placed in $C_{2n}$ and the remaining rooks and files are placed in ${\mathcal L}_{n}$.  In $C_{2n}$, there are $j-2$ boxes to place the file. The contribution of all placements of type $III$ is
$$
\nu [j-2]_q m_{2n-(j-2),(j-2)-n-k}({\mathcal L}_{n};q)=\nu [j-2]_q L_{\mu,\nu;q}(n;j-2,k).
$$ 
Adding the contributions of type $I,II$ and $III$ yields the assertion.
\end{proof}
We specialize the above recurrence to $\nu=0$ where only $j=n+k$ remains and let $\mu=1$. Define $L_q(n,k)\equiv L_{1,0;q}(n;n+k,k)$. Since $L_q(n+1,k)=L_{1,0;q}(n+1;n+1+k,k)$ the above recurrence relation yields
\begin{align*}
L_q(n+1,k)=& q^{n+k-1}L_{1,0;q}(n;n+k-1,k-1)+[n+k]_q L_{1,0;q}(n;n+k,k) \\
 =& q^{n+k-1}L_q(n,k-1)+[n+k]_q L_q(n,k),
\end{align*}
which is the usual recurrence relation of the $q$-deformed Lah numbers (see, e.g., \cite{TMMS2016}). We can also specialize to $\mu=0$ where only $j=2n$ remains and let $\nu=1$. Define $L^{s}_q(n,k)\equiv L_{0,1;q}(n;2n,k)$. Then
$$
L^{s}_q(n+1,k)=q^{2n}L^{s}_q(n,k-1)+[2n]_q L^{s}_q(n,k).
$$

Let us briefly consider $(Y^rX)^n$, for arbitrary $r\in \mathbb{N}$. As mentioned above, the normal ordering coefficients of $(Y^rX)^n$ in the Weyl algebra ${\mathcal W}={\mathcal O}_{1,0}(1)$ are the generalized Stirling numbers $S_{r,1}(n,k)$, and it would be appropriate to call them {\em Scherk numbers of order $r$}, $S^{(r)}(n,k)$. Clearly, $(Y^rX)^n$ outlines the $r$-jump board $J_{n,r}$, and one has $S^{(r)}(n,k)=r_{n-k}(J_{n,r})$. Defining in complete analogy to \eqref{QOreLahDef} the {\em $q$-deformed Ore-Scherk numbers of degree $r$}, $S^{(r)}_{\mu,\nu;q}(n;j,k)$, as normal ordering coefficients of $(Y^rX)^n$ in ${\mathcal O}_{\mu,\nu}(q)$, one can show in the same fashion as above the following result.
\begin{proposition} The $q$-deformed Ore-Scherk numbers of degree $r$ are given by
\begin{equation}\label{OREScherk}
S^{(r)}_{\mu,\nu;q}(n;j,k)=m_{rn-j,j-(r-1)n-k}(J_{n,r};q).
\end{equation}
\end{proposition}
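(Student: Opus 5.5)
This follows directly from Theorem~\ref{THNOORE}, exactly as in the proofs of Proposition~\ref{PropOS} and \eqref{QORELAHEX}.

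\textbf{Step 1: the board and letter counts.} The word $(Y^rX)^n$ has the form $Y^{n_n}X^{m_n}\cdots Y^{n_1}X^{m_1}$ with all $n_i=r$ and $m_i=1$. Hence $|{\bf n}|=rn$ and $|{\bf m}|=n$. Reading the word left to right, we take $r$ steps up followed by one step right, repeated $n$ times. So the $i$-th column from the left has height $r(n-i)$. The outlined Ferrers board is therefore the $r$-jump board $J_{n,r}$, with column heights $(r(n-1),\ldots,r,0)$.

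\textbf{Step 2: apply Theorem~\ref{THNOORE}.} Formula \eqref{NOORE} gives
$$
(Y^rX)^n=\sum_{k'}\sum_{\ell} m_{k',\ell}(J_{n,r};q)\,Y^{rn-k'}X^{n-k'-\ell}.
$$
Set $j=rn-k'$ and $k=n-k'-\ell$. Then $k'=rn-j$ and
$$
\ell=n-k'-k=n-rn+j-k=j-(r-1)n-k.
$$
Each pair $(k',\ell)$ yields a distinct monomial $Y^jX^k$, so the change of variables is a bijection onto its image.

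\textbf{Step 3: compare coefficients.} We compare with the defining expansion of $S^{(r)}_{\mu,\nu;q}(n;j,k)$, taken in complete analogy to \eqref{QOreLahDef}. The coefficient of $Y^jX^k$ is then $m_{rn-j,\,j-(r-1)n-k}(J_{n,r};q)$, as claimed.

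For index pairs $(j,k)$ not reached by the map, at least one index of $m$ is negative or exceeds what the board allows. We adopt the convention that $m_{a,b}=0$ in that case, which is consistent with the range of summation in \eqref{NOORE}.

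The only point needing care is this bookkeeping of summation ranges. There is no genuine obstacle, because all the combinatorial content is already in Theorem~\ref{THNOORE}. As a sanity check, $r=1$ recovers \eqref{QOREEX} and $r=2$ recovers \eqref{QORELAHEX}.
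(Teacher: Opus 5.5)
Your proof is correct and is essentially the paper's argument. The paper only says the result follows ``in the same fashion'' as Proposition~\ref{PropOS}, i.e.\ by specializing \eqref{NOORE} to $(Y^rX)^n$ on the board $J_{n,r}$ with $|{\bf n}|=rn$, $|{\bf m}|=n$, substituting $j=rn-k'$, $k=n-k'-\ell$, and comparing coefficients, which is exactly what you do (both arguments tacitly use that the $Y^jX^k$ form a basis).
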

For $r=1$, one recovers \eqref{QOREEX}, and for $r=2$, one recovers \eqref{QORELAHEX}. Let $q=\mu=1$ and let us specialize to $\nu=0$ so that only $j=(r-1)n+k$ remains. Thus, $S^{(r)}_{1,0;1}(n;(r-1)n+k,k)=m_{n-k,0}(J_{n,r};1)=r_{n-k}(J_{n,r})=S^{(r)}(n,k)$.

Now, let us turn to the basic word $X^mY^n$, for $m,n\in \mathbb{N}$. The associated Ferrers board is the {\em rectangle board} $R_{m,n}$ having $m$ columns of height $n$. From \eqref{NOORE2}, we obtain for $q=1$ that in ${\mathcal O}_{\mu,\nu}(1)$
\begin{equation}\label{BasicCR}
X^mY^n = \sum_{r=0}^{m}\sum_{t=0}^{\min(n, r)} \mu^{t} \nu^{r-t} | {\mathcal M}_{t,r-t}(R_{m,n})| Y^{n-t}X^{m-r},
\end{equation}
and it remains to determine $| {\mathcal M}_{t,r-t}(R_{m,n})|$, the number of mixed placements of $t$ rooks and $(r-t)$ files on $R_{m,n}$. The rook and file numbers of $R_{m,n}$ are given by
\begin{equation}\label{Rectangle}
r_k(R_{m,n})=k! \binom{n}{k}\binom{m}{k}, \,\,\,\,\,  f_k(R_{m,n})= n^k \binom{m}{k}. 
\end{equation}
Let us recall the simple argument: To place $k$ files on $R_{m,n}$ we first choose $k$ of the $m$ columns, giving the factor $\binom{m}{k}$. In each of the $k$ columns we have $n$ possibilities to place the file, hence the factor $n^k$, yielding in total $ n^k \binom{m}{k}$ possibilities. For the placement of $k$ non-attacking rooks, after choosing $k$ of the $m$ columns,  we have to respect that no two rooks lie in the same row. After placing the first rook, we have $(n-1)$ possible rows to place the second rook, $(n-2)$ possible rows to place the third rook, hence, $n(n-1)(n-2)\cdots (n-k+1)=k!\binom{n}{k}$ possibilities to place the $k$ rooks in the chosen columns, giving in total $k! \binom{n}{k}\binom{m}{k}$ possibilities.  

To determine the number of mixed placements of $k$ rooks and $\ell$ files is more complicated since a file can be placed in the row of a rook to the right of the rook -- but not to the left of the rook. Thus, after having placed $k$ rooks, when placing a file to the right of all rooks, we have $n$ possibilities to choose from, when placing it between the $j$-th and $(j+1)$-th rook from the right (with $1\leq j \leq k$) we have in the column $(n-j)$ possibilities to place it, and when placing it to the left of all rooks, there are $(n-k)$ possibilities to place it. Let us introduce some terminology. We denote by ${\mathcal C}_k(m)$ the set of all subsets of $\{1,2,\ldots,m\}$ with $k$ elements. We number the columns of $R_{m,n}$ from right to left, starting with 1 for the right-most column. Let us place the $k$ rooks first. The choice of $k$ columns corresponds to $\pi = (\pi_1,\ldots,\pi_k) \in {\mathcal C}_k(m)$. To distribute $\ell$ files in the remaining $(m-k)$ colums, we consider the (at most) $(k+1)$ {\em blocks} induced by the placement of the $k$ rooks: The first block contains $m_1(\pi)=(\pi_1-1)$ columns to the right of all rooks, the $j$-th block, for $j=2,\ldots,k$, contains $m_j(\pi)=(\pi_{j}-\pi_{j-1}-1)$ columns between the $(j-1)$-th and $j$-th rook, and the $(k+1)$-th block contains $m_{k+1}(\pi)=(m-\pi_k)$ columns to the left of all rooks. Note that each of these blocks can be empty (when two rooks lie in neighbouring columms). By construction, $m_1(\pi)+\cdots + m_{k+1}(\pi)=m-k$. We distribute the $\ell$ files into the $(k+1)$ blocks. This corresponds to a weak composition of $\ell$ into $(k+1)$ parts, where the $j$-th part corresponds to the number of files in the $j$-th block. Let us denote the set of weak compositions of $\ell$ into $(k+1)$ parts by ${\mathcal WC}_{k+1}(\ell)=\{\rho =(\rho_1,\ldots, \rho_{k+1}) \in \mathbb{N}_0^{k+1} \, | \, \rho_1+\cdots + \rho_{k+1} = \ell\} $. However, for a given $\pi \in {\mathcal C}_k(m)$ not all weak compositions can appear but only those suiting $\pi$. Thus, we define 
$$
{\mathcal WC}_{k+1}(\ell || \pi ) =\{\rho \in {\mathcal WC}_{k+1}(\ell) \, | \, \rho_j \leq m_j(\pi), j=1,\ldots, k+1 \}.
$$
\begin{proposition} Let $k,\ell \in \mathbb{N}_0$. The number of mixed placements of $k$ rooks and $\ell$ files on the rectangle board $R_{m,n}$ is given by
\begin{equation}\label{MRRectangle}
|{\mathcal M}_{k,\ell}(R_{m,n})|= \sum_{\pi \in {\mathcal C}_k(m)} \sum_{\rho \in {\mathcal WC}_{k+1}(\ell || \pi)}k!  \binom{n}{k} \prod_{j=1}^{k+1} \binom{m_{j}(\pi)}{\rho_{j}}(n+1-j)^{\rho_{j}}.
\end{equation}
\end{proposition}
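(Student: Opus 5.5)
We will count $\mathcal{M}_{k,\ell}(R_{m,n})$ by first fixing the rook positions and then counting the admissible file positions, following the discussion preceding the statement. The first step is to split a mixed placement $\phi$ into its set of rooks and its set of files. The rooks form a $k$-rook placement on $R_{m,n}$, and the columns they occupy give a $k$-subset $\pi=(\pi_1<\cdots<\pi_k)\in{\mathcal C}_k(m)$, with columns numbered from the right. For fixed $\pi$, the rooks can be put into these columns in $n(n-1)\cdots(n-k+1)=k!\binom{n}{k}$ ways, exactly as in the derivation of \eqref{Rectangle}. This factor does not depend on which rows the rooks occupy, which is essential for the rest of the count.

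Next, with the rooks fixed, we count the file placements. By the non-attacking conditions, files occupy distinct columns that are not rook columns, so they sit in the $m-k$ remaining columns. These columns fall into the blocks of sizes $m_1(\pi),\ldots,m_{k+1}(\pi)$. The key observation is the following. A file in a column lying in block $j$ has exactly $j-1$ rooks to its right, namely those in columns $\pi_1,\ldots,\pi_{j-1}$. Since rooks are non-attacking, these rooks occupy $j-1$ distinct rows. The condition ``no file lies to the left of a rook in the same row'' forbids exactly those $j-1$ rows. Hence the file has $n-(j-1)=n+1-j$ admissible positions in its column. This number is independent of which rows the rooks actually use and of the other files, since files never forbid each other beyond the column condition.

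Therefore, for fixed rooks, a file placement amounts to two choices. First, we choose how many files $\rho_j$ go into block $j$; this gives a weak composition $\rho$ of $\ell$ with $\rho_j\le m_j(\pi)$, i.e.\ $\rho\in{\mathcal WC}_{k+1}(\ell\,||\,\pi)$. Second, within each block we choose which $\rho_j$ of its $m_j(\pi)$ columns carry files, in $\binom{m_j(\pi)}{\rho_j}$ ways, and then place each of these files in one of its $n+1-j$ admissible cells, giving $(n+1-j)^{\rho_j}$. Different data $(\pi,\text{rook rows},\rho,\text{columns},\text{cells})$ give different placements, and every placement arises this way. So the map is a bijection, and multiplying and summing over $\rho$ and $\pi$ yields \eqref{MRRectangle}.

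The main point requiring care is the independence claim in the second step: the count of admissible rows for a file must depend only on the block index $j$. This needs both facts used there, that rooks lie in distinct rows and that the interaction condition only involves rooks strictly to the right of the file. A second, minor check concerns the boundary cases: the convention $\binom{n}{k}=0$ for $k>n$ handles impossible rook placements, and blocks with $n+1-j\le 0$ can only arise when $k>n$, where the formula vanishes anyway.
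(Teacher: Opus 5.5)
Your proof is correct and follows the paper's argument: fix the rook columns $\pi$ and rows ($k!\binom{n}{k}$ ways), distribute the files over the blocks via $\rho\in{\mathcal WC}_{k+1}(\ell\,||\,\pi)$, then choose the file columns ($\binom{m_j(\pi)}{\rho_j}$) and one of the $n+1-j$ admissible cells in each. One small correction to your boundary remark: $n+1-j=0$ also occurs when $k=n$ and $j=n+1$, but there $0$ is the correct count of admissible cells, so the formula still holds with the convention $0^0=1$.
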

\begin{proof}
We continue the argument from above and note that we have $ k! \binom{n}{k} | {\mathcal C}_k(m) |$ possibilities to place the $k$ rooks. However, for each $\pi \in {\mathcal C}_k(m) $ we have to determine the possible placements of $\ell$ files in the remaining $(m-k)$ columns. Each weak composition $\rho =(\rho_1,\ldots, \rho_{k+1}) \in {\mathcal WC}_{k+1}(\ell || \pi )$ suiting $\pi$ corresponds to a distribution of $\rho_j$ files into the $j$-th block, for $j=1,\ldots,k+1$. Since the $j$-th block has $m_j(\pi)$ columns, there are $\binom{m_j(\pi)}{\rho_j}$ choices of columns to place the files. Each column in the $j$-th block has $(n+1-j)$ boxes where the file can be placed, giving a further factor of $(n+1-j)^{\rho_j}$, yielding in total $\binom{m_j(\pi)}{\rho_j} (n+1-j)^{\rho_j}$ for the $j$-th block. Considering all blocks and summing over all weak compositions suiting $\pi$ yields the assertion.    
\end{proof}
\begin{corollary}
For $\ell=0$, only $\rho =(0,\ldots,0)$ remains, implying $|{\mathcal M}_{k,0}(R_{m,n})|= k!  \binom{n}{k} |{\mathcal C}_k(m) | =k!  \binom{n}{k} \binom{m}{k}$, recovering the first equation of \eqref{Rectangle}. For $k=0$, only $\rho =(\ell)$ remains, thus $|{\mathcal M}_{0,\ell}(R_{m,n})|= \binom{m}{\ell}n^{\ell}$, recovering  the second equation of \eqref{Rectangle}.
\end{corollary}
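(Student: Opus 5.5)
The corollary follows by specializing \eqref{MRRectangle} to the two degenerate cases. The only real work is identifying which weak compositions survive in each case and simplifying the resulting product.

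\emph{Case $\ell=0$.} The only weak composition of $0$ into $k+1$ parts is $\rho=(0,\ldots,0)$. It satisfies $\rho_j\le m_j(\pi)$ for every $\pi\in{\mathcal C}_k(m)$, since $m_j(\pi)\ge 0$. Hence ${\mathcal WC}_{k+1}(0\,||\,\pi)=\{(0,\ldots,0)\}$ for all $\pi$. For this $\rho$ every factor of the product is $\binom{m_j(\pi)}{0}(n+1-j)^0=1$; here I use the convention $0^0=1$, which matters only if $n+1-j=0$. The inner sum is therefore independent of $\pi$ and equals $k!\binom{n}{k}$. Summing over $\pi$ gives $k!\binom{n}{k}|{\mathcal C}_k(m)|=k!\binom{n}{k}\binom{m}{k}$, which is $r_k(R_{m,n})$ in \eqref{Rectangle}.

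\emph{Case $k=0$.} Now ${\mathcal C}_0(m)$ contains only the empty subset $\pi=\emptyset$. There is just one block, and it contains all the columns: with $k=0$ the formula $m_{k+1}(\pi)=m-\pi_k$ has to be read as $m_1(\emptyset)=m$, since the blocks partition the $m-k=m$ columns. The weak compositions of $\ell$ into one part reduce to $\rho=(\ell)$, which is admissible exactly when $\ell\le m$.

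The prefactor is $0!\binom{n}{0}=1$, and the single factor is $\binom{m}{\ell}n^{\ell}$ because $n+1-1=n$. This gives $|{\mathcal M}_{0,\ell}(R_{m,n})|=\binom{m}{\ell}n^{\ell}=f_\ell(R_{m,n})$. If $\ell>m$, the set ${\mathcal WC}_1(\ell\,||\,\emptyset)$ is empty and the sum is $0$, which agrees with $\binom{m}{\ell}=0$.

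The one delicate point is this boundary convention for $k=0$, where the block definitions are written in terms of $\pi_1,\pi_k$ and do not literally apply. I would justify reading $m_1(\emptyset)=m$ directly from the construction: with no rooks there is a single block, namely all $m$ columns, each with $n$ admissible boxes for a file. The remaining steps are immediate.
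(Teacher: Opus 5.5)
Your proposal is correct and matches the paper's argument: specialize \eqref{MRRectangle}, note that only $\rho=(0,\ldots,0)$ survives when $\ell=0$ and only $\rho=(\ell)$ survives when $k=0$, and simplify the product. The paper leaves implicit the conventions you spell out ($0^0=1$ when $k=n$, the single block read as $m_1(\emptyset)=m$ when $k=0$, and the empty sum when $\ell>m$), and these are what make the specialization literally valid.
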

Inserting \eqref{MRRectangle} into \eqref{BasicCR} yields in ${\mathcal O}_{\mu,\nu}(1)$ the basic commutation relation
$$
X^mY^n = \sum_{r=0}^{m}\sum_{t=0}^{\min(n, r)}\left\{ \sum_{\pi \in {\mathcal C}_t(m)} \sum_{\rho \in {\mathcal WC}_{t+1}(r-t || \pi)}  \mu^{t} \nu^{r-t} t!  \binom{n}{t} \prod_{j=1}^{t+1} \binom{m_{j}(\pi)}{\rho_{j}}(n+1-j)^{\rho_{j}} \right\} Y^{n-t}X^{m-r}.
$$
\begin{example}
Consider $m=n=1$. On the right-hand side, only three combinations contribute: 1) $r=t=0$ yields the summand $YX$, 2) $r=1,t=0$ the summand $\nu Y$, and 3) $r=t=1$ the summand $\mu I $. Thus, $XY=YX+\nu Y+\mu I$, as it should.
\end{example}
\begin{remark}
Special rectangular boards have been considered in literature. For example, the {\em Abel board} $A_n \equiv R_{n-1,n}$ has been studied in \cite{JGJH2000,MSMY2017}. From \eqref{Rectangle}, one has $f_{n-k}(A_n)=n^{n-k} \binom{n-1}{k-1}=t_{n,k}$, the number of labeled forests on $n$ vertices composed of $n-k$ rooted trees. More generally, {\em $r$-restricted Abel boards} $A_n^{(r)}\equiv R_{n-r,n}$ were studied in \cite{MSMY2017}. One has $f_{n-k}(A_n^{(r)})=n^{n-k} \binom{n-r}{k-r}=t_{n,k}^{(r)}$. Schlosser and Yoo \cite{MSMY2017} suggested to refer to $t_{n,k}$ as {\em Abel numbers} and to $t_{n,k}^{(r)}$ as {\em $r$-restricted Abel numbers}. Closely related is the {\em Laguerre board} $L_n \equiv R_{n,n-1}$ \cite{GR1986}. From \eqref{Rectangle}, one has $r_{n-k}(L_n)=\frac{n!}{k!}\binom{n-1}{k-1}=L(n,k)$, the (unsigned) Lah numbers. The {\em $r$-restricted Laguerre boards} $L_n^{(r)}\equiv R_{n+r-1,n-r}$ were introduced in \cite{MSMY2017}, and $r_{n-k}(L_n^{(r)})=\frac{(n+r-1)!}{(k+r-1)!}\binom{n-r}{k-r}=L_r(n,k)$, recovering the $r$-Lah numbers.  
\end{remark}

As mentioned in the introduction, Yeliussizov \cite[Lemma 10.2]{DY2019} gave a formula for $D^nU^m$ where $DU-UD=I+D$ involving the Eulerian numbers. We adapt the proof given in  \cite{DY2019} to the present case and obtain the following result.
\begin{proposition}
Let $X$ and $Y$ satisfy the commutation relation $XY-YX=I+Y$ of the Ore algebra ${\mathcal O}_{1,1}(1)$. Then one has, for $m,n \in {\mathbb N}$,
$$
X^mY^n=\sum_{r=0}^m \sum_{t=0}^n   \binom{m}{r}\binom{n}{t} \left\{ \sum_{\ell=1}^n \binom{r-t+\ell}{\ell}A(r,n-\ell) \right\} Y^{n-t}X^{m-r},
$$
where the $A(n,k)$ denotes the Eulerian numbers.
\end{proposition}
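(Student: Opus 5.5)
The plan is to reduce to the shift algebra and then extract the coefficient with a finite-difference computation and Worpitzky's identity, following the idea in \cite{DY2019}. Put $Z=I+Y$. The relation $XY-YX=I+Y$ gives $XZ-ZX=XY-YX=Z$, so $X$ and $Z$ satisfy the relation \eqref{CRShift} of the shift algebra. In operator form this reads $XZ=Z(X+I)$, and by induction $X^mZ^s=Z^s(X+sI)^m$ for all $m,s\in\mathbb{N}_0$. (One could also derive this from \eqref{NORQShift} with $q=\nu=1$ on the rectangle board, but the direct induction is shorter.)

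Next I write $Y^n=(Z-I)^n=\sum_{s=0}^n\binom{n}{s}(-1)^{n-s}Z^s$. Then I expand $(X+sI)^m=\sum_r\binom{m}{r}s^rX^{m-r}$ and $Z^s=\sum_u\binom{s}{u}Y^u$. Taking $u=n-t$, the coefficient of $Y^{n-t}X^{m-r}$ in $X^mY^n$ is
$$\binom{m}{r}\sum_{s}(-1)^{n-s}\binom{n}{s}\binom{s}{n-t}s^r.$$
Using $\binom{n}{s}\binom{s}{n-t}=\binom{n}{t}\binom{t}{s-n+t}$ and setting $j=s-n+t$, this becomes
$$\binom{m}{r}\binom{n}{t}\sum_{j=0}^{t}(-1)^{t-j}\binom{t}{j}(n-t+j)^r=\binom{m}{r}\binom{n}{t}\,\bigl(\Delta^t x^r\bigr)\big|_{x=n-t},$$
where $\Delta$ is the forward difference operator. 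This already produces the factor $\binom{m}{r}\binom{n}{t}$ in the statement.

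It then remains to identify $\Delta^tx^r$ at $x=n-t$ with $\sum_{\ell=1}^n\binom{r-t+\ell}{\ell}A(r,n-\ell)$. I would use Worpitzky's identity $x^r=\sum_{k}A(r,k)\binom{x+k}{r}$, with the convention that fixes the index range of $A(r,k)$, together with $\Delta\binom{x+k}{r}=\binom{x+k}{r-1}$. This gives $\Delta^tx^r=\sum_kA(r,k)\binom{x+k}{r-t}$. At $x=n-t$ it becomes $\sum_kA(r,k)\binom{n-t+k}{r-t}$.

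The final step is to reindex this against $\binom{r-t+\ell}{\ell}=\binom{r-t+\ell}{r-t}$. I would try $k\mapsto r-1-k$ via the symmetry $A(r,k)=A(r,r-1-k)$, possibly combined with the alternative form of Worpitzky's identity, so that the top entries $n-t+k$ and $r-t+\ell$ agree with Eulerian index $n-\ell$. I expect this bookkeeping to be the main obstacle: pinning down the paper's convention for $A(n,k)$ (indexing from $0$ or $1$), checking that the summation range $1\le\ell\le n$ matches exactly the terms with $A(r,n-\ell)\neq0$ and nonvanishing binomials, and handling the degenerate cases $r=0$ or $t>r$. In those cases $\Delta^tx^r=0$, and the right-hand side must vanish too. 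If the direct reindexing does not match, my fallback is to verify the identity on small cases and then prove it by induction on $n$ with the Eulerian recurrence.
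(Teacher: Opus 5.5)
\textbf{Gap: the final identification cannot be proved, because the proposition is false as printed.}

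Your reduction is correct and is exactly the paper's. The paper also uses $X^m(Y+1)^k=(Y+1)^k(X+k)^m$ and $Y^n=((Y+1)-1)^n$, and arrives at $\binom{m}{r}\binom{n}{t}\sum_{k=n-t}^{n}(-1)^{n-k}\binom{t}{n-k}k^r$. This is your $\binom{m}{r}\binom{n}{t}\,\Delta^t x^r\big|_{x=n-t}$. At that point the paper stops computing and quotes Yeliussizov's identity for the Eulerian form.

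The step you called bookkeeping is where your plan breaks, and conventions are not the cause. Finish your own Worpitzky computation for $r\ge 1$, using $A(r,k)=A(r,r-1-k)$ and noting that the terms with $k\ge n$ vanish. You get
\[
\Delta^t x^r\big|_{x=n-t}=\sum_{k=0}^{r-1}A(r,k)\binom{n-t+r-1-k}{r-t}=\sum_{\ell=1}^{n}\binom{r-t+\ell-1}{r-t}A(r,n-\ell).
\]
For $t\le r$, however, the bracket in the statement is $\sum_{\ell=1}^{n}\binom{r-t+\ell}{r-t}A(r,n-\ell)$. By Pascal's rule the printed bracket exceeds the true value by $\sum_{\ell=1}^{n}\binom{r-t+\ell-1}{r-t-1}A(r,n-\ell)$. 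This is at least $1$ whenever $0\le t<r$, since the term $\ell=n$ alone is.

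Two concrete checks:
\begin{itemize}
\item For $m=n=1$ the relation gives $XY=YX+Y+I$, but at $(r,t)=(1,0)$ the bracket is $\binom{2}{1}A(1,0)=2$.
\item For $m=2$, $n=1$ the formula predicts $YX^2+4YX+3Y+2X+2I$, whereas in fact $X^2Y=YX^2+2YX+Y+2X+I$.
\end{itemize}

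Reading $A(r,k)$ as the number of permutations with $k$ runs (that is, $k-1$ descents) does not rescue it. That reading fixes the binomial but drops the boundary term $A(r,n-1)$ (in the descent convention), which is nonzero for $n\le r$. For example, the coefficient of $Y$ in $XY$ then comes out as $0$. So no reindexing and no induction on $n$ can prove the statement; your small-case fallback would have exposed this immediately.

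\textbf{What your argument does prove.} It establishes the corrected statement, with the bracket replaced by
\[
\sum_{\ell=1}^{n}\binom{r-t+\ell-1}{r-t}A(r,n-\ell)\quad\text{or equivalently}\quad \sum_{k\ge0}A(r,k)\binom{n-t+k}{r-t}.
\]
This form holds for all $0\le r\le m$ and $0\le t\le n$. For $r=0$ it equals $1$ or $0$ according as $t=0$ or $t>0$. For $t>r$ it vanishes, matching $\Delta^t x^r=0$. For this version your route is a complete, self-contained proof. That is more than the paper gives: its proof is the same reduction followed by a citation whose transcription is off by one.
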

\begin{proof}
First observe that $XY-YX=I+Y$ implies $X(Y+1)=(Y+1)(X+1)$; here and in the following we identify $kI$ with $k$. An induction over $m$ and then over $k$ shows $X^m(Y+1)^k=(Y+1)^k(X+k)^m$. Writing $Y=(Y+1)-1$, one has
\begin{eqnarray*}
X^mY^n&=&\sum_{k=0}^n (-1)^{n-k}\binom{n}{k}(Y+1)^k(X+k)^m \\
&=&  \sum_{i=0}^n \sum_{j=0}^m \sum_{k=i}^n (-1)^{n-k}\binom{n}{k} \binom{k}{i}\binom{m}{j}k^{m-j} Y^{i} X^j \\ &=&  \sum_{i=0}^n \sum_{j=0}^m \binom{n}{i} \binom{m}{j} Y^{i}X^j \left\{ \sum_{k=i}^n (-1)^{n-k}\binom{n-i}{k-i}k^{m-j}\right\},
\end{eqnarray*}
where we expanded in the second equation the binomials and switched the order of summation, and used in the third equation that $\binom{n}{k} \binom{k}{i}=\binom{n}{i} \binom{n-i}{k-i}$. Letting $r=m-j$ and $t=n-i$, and then $\binom{t}{k-n+t}=\binom{t}{n-k}$, we obtain
$$
X^mY^n=  \sum_{t=0}^n \sum_{r=0}^m \binom{n}{t} \binom{m}{r} Y^{n-t}X^{m-r} \left\{ \sum_{k=n-t}^n (-1)^{n-k}\binom{t}{n-k}k^{r}\right\}. 
$$
The sum $\sum_{k=n-t}^n (-1)^{n-k}\binom{t}{n-k}k^{r}$ equals the expression $q_n(r,t)$ considered by Yeliussizov \cite{DY2019} who showed that $q_n(r,t)=\sum_{\ell=1}^n \binom{r-t+\ell}{\ell}A(r,n-\ell)$  with the Eulerian numbers $A(r,n-\ell)$ counting the number of permutations of $(1,\ldots,r)$ with $(n-\ell)$ descents.
\end{proof}
Comparing the two expressions for the normal ordered form of $X^mY^n$, we obtain the identity
$$
\sum_{\pi \in {\mathcal C}_t(m)} \sum_{\rho \in {\mathcal WC}_{t+1}(r-t || \pi)} \prod_{j=1}^{t+1} \binom{m_{j}(\pi)}{\rho_{j}}(n+1-j)^{\rho_{j}} =\frac{1}{t!}\sum_{\ell=1}^{n} \binom{m}{r}\binom{r-t+\ell}{\ell}A(r,n-\ell). 
$$

\section{The Binomial formula in the $q$-deformed generalized Ore algebra}\label{Binomial}

In this section, we consider the binomial $(X+Y)^m$ where $X$ and $Y$ satisfy \eqref{CROre}. For more on the binomial formula in noncommuting variables see \cite{TMMS2011} or the review \cite{MS2021} and the references given therein. Another recent approach can be found in \cite{HJYZ2023}, see also \cite{B2024}. 

We will expand the binomial into a sum over all binary words over the alphabet $\{X,Y\}$ of length $m$, list them with the help of partitions, and then use Theorem~\ref{THNOORE} for each word appearing. 

Let us introduce some basic notations for partitions following \cite{TMMS2011}. Recall that a {\em partition} $\lambda=(\lambda_1,\lambda_2,\ldots,\lambda_k) \equiv \lambda_1\lambda_2\ldots\lambda_k$ is a weakly decreasing sequence of positive integers. We denote the sum of the parts of $\lambda$ by $|\lambda|$, that is, $|\lambda|=\sum_{i=1}^k\lambda_i$. For a partition $\lambda$, the {\em Young diagram} $Y_\lambda$ of shape $\lambda$ is a left-justified diagram of $|\lambda|$ boxes, with $\lambda_i$ boxes in the $i$-th column. We denote the set of all Young diagrams that are contained in a $ \ell \times k$ box by ${\mathcal I}_{\ell,k}$. Define ${\mathcal I}_{m}=\bigcup_{k=0}^m {\mathcal I}_{m-k,k}$. Note that ${\mathcal I}_{m-k,k}$ contains $\binom{m}{k}$ elements, hence $|{\mathcal I}_m|=2^m$. Let us recall the following result \cite[Proposition 3.3]{TMMS2018}.
\begin{theorem}\label{thr1}
There exists a bijection between the Young diagrams in the set $\mathcal{I}_m$ and the binary words over the alphabet $\{X,Y\}$ of length $m$.
\end{theorem}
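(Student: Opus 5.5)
The natural bijection is the boundary path. A binary word $\omega$ of length $m$ with $k$ letters $Y$ and $m-k$ letters $X$ is read as a lattice path from $(0,0)$: each $X$ is a unit step right and each $Y$ a unit step up, exactly as in the construction of the Ferrers board $B_\omega$ in Section~\ref{Normal}. The path ends at $(m-k,k)$, so it lies in the $(m-k)\times k$ box. The region of that box lying above (to the upper left of) the path is left-justified, and its column heights weakly decrease from left to right. This region is precisely $B_\omega$, and it is the Young diagram $Y_\lambda$ with $\lambda\in\mathcal{I}_{m-k,k}$. Here $\lambda_i$ is the height of the $i$-th column, namely $k$ minus the number of $Y$'s preceding the $i$-th letter $X$. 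With this map one has $\lambda_1\le k$ and at most $m-k$ nonzero parts. The plan is to define this map $\Phi\colon \omega\mapsto \lambda$ on words with exactly $k$ letters $Y$ and show that it is a bijection onto $\mathcal{I}_{m-k,k}$ for each fixed $k$.

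The inverse is explicit. Given $\lambda\in\mathcal{I}_{m-k,k}$, pad $\lambda$ with zeros to exactly $m-k$ parts $\lambda_1\ge\cdots\ge\lambda_{m-k}\ge 0$. Then write
\[
\Psi(\lambda)=Y^{k-\lambda_1}XY^{\lambda_1-\lambda_2}X\cdots Y^{\lambda_{m-k-1}-\lambda_{m-k}}XY^{\lambda_{m-k}}.
\]
All exponents are nonnegative because $\lambda$ is weakly decreasing and $\lambda_1\le k$. The total number of $Y$'s telescopes to $k$, and the number of $X$'s is $m-k$, so the length is $m$. Checking $\Phi\circ\Psi=\mathrm{id}$ and $\Psi\circ\Phi=\mathrm{id}$ is immediate, since in both directions the number of $Y$'s before the $i$-th $X$ equals $k-\lambda_i$.

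Since the sets $\mathcal{I}_{m-k,k}$ for $k=0,\ldots,m$ are disjoint, the bijections glue into a bijection $\mathcal{I}_m\to\{X,Y\}^m$. Disjointness must be checked, because the same $\lambda$ (for instance the empty partition) fits in several boxes. So an element of $\mathcal{I}_{m-k,k}$ should be understood as a diagram together with its bounding box, equivalently indexed by the pair $(\lambda,k)$. This is consistent with the count $|\mathcal{I}_m|=\sum_k\binom{m}{k}=2^m$ stated before the theorem.

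The only real obstacle is this bookkeeping convention: one must treat $\mathcal{I}_m$ as a disjoint union and fix the padding convention with zero parts. Once that is in place, the cardinality check $\binom{m}{k}$ on both sides confirms bijectivity even without verifying both compositions.
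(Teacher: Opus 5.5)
Your proof is correct and is essentially the paper's: your inverse $\Psi$ is exactly the decomposition \eqref{eqbij1}, with your $k$ letters $Y$ playing the role of the paper's $m-\ell$, and the paper likewise just asserts that both directions are inverse. Your boundary-path reading via $B_\omega$ and your insistence that $\mathcal{I}_m$ be read as a disjoint union of the $\mathcal{I}_{m-k,k}$ add welcome care; the disjoint-union reading is what $|\mathcal{I}_m|=2^m$ needs and what the paper passes over with ``Clearly''. Your box dimensions are transposed relative to the paper's convention, but that amounts to the relabeling $k\leftrightarrow m-k$ and leaves $\mathcal{I}_m$ unchanged.
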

We recall the construction of this bijection. Let $\pi=\pi_1\pi_2\cdots\pi_m$ be any word over the alphabet $\{X,Y\}$ of length $m$. Let $\ell$ be the number of occurrences of the letter $X$ in $\pi$. Then the number of occurrences of the letter $Y$ in $\pi$ is given by ($m-\ell$). The numbers $\lambda_j$, for $1\leq j \leq \ell$, are defined by
\begin{align}\label{eqbij1}
\pi=Y^{m-\ell-\lambda_1}XY^{\lambda_1-\lambda_2}\cdots XY^{\lambda_{\ell-1}-\lambda_\ell}XY^{\lambda_\ell}.
\end{align}
By construction, $\lambda_\ell\geq 0$, $\lambda_j \geq \lambda_{j+1}$ (for $j=1,\ldots , \ell-1$), and $\lambda_1 \leq (m-\ell$), thus $\lambda=(\lambda_1,\lambda_2,\ldots,\lambda_\ell)\equiv \lambda_1\lambda_2\cdots\lambda_\ell$ is a partition with $Y_\lambda\in\mathcal{I}_{m-\ell,\ell}$. On the other hand, if $Y_\lambda$ is a Young diagram in $\mathcal{I}_{m-\ell,\ell}\subset\mathcal{I}_m$, then we define the word $\pi=\pi_1\pi_2\cdots\pi_m$ by \eqref{eqbij1}. Thus, $\pi$ is a binary word over the alphabet $\{X,Y\}$ of length $m$ with exactly $\ell$ letters of $X$ and exactly ($m-\ell$) letters of $Y$. Hence, we can map the set of all binary words over the alphabet $\{X,Y\}$ of length $m$ with exactly $\ell$ letters of $X$ and exactly ($m-\ell$) letters of $Y$ bijectively to the set $\mathcal{I}_{m-\ell,\ell}$. Clearly, this is a bijection between the set of all binary words over the alphabet $\{X,Y\}$ of length $m$ and the set $\mathcal{I}_m$.

Now, by expanding the binomial $(X+Y)^m$, we get a sum over all binary words of length $m$ in $X$ and $Y$. Thus, by Theorem~\ref{thr1} and, in particular \eqref{eqbij1}, we can write
\begin{equation}\label{binomgen}
(X+Y)^m=\sum_{\ell=0}^m \sum_{\lambda \in \mathcal{I}_{m-\ell,\ell}}Y^{m-\ell-\lambda_1}XY^{\lambda_1-\lambda_2}\cdots XY^{\lambda_{\ell-1}-\lambda_\ell}XY^{\lambda_\ell},
\end{equation}
where we write here -- and in the following -- with a slight abuse of notation $\lambda \in \mathcal{I}_{m-\ell,\ell}$ when we have $Y_{\lambda} \in \mathcal{I}_{m-\ell,\ell}$. Each summand is a word in $X$ and $Y$ of the form considered in Theorem~\ref{THNOORE}. Let us write, for $\lambda \in \mathcal{I}_{m-\ell,\ell}$,
$$
\omega_{\lambda}=XY^{\lambda_1-\lambda_2}\cdots XY^{\lambda_{\ell-1}-\lambda_\ell}XY^{\lambda_\ell}.
$$
Then we can write \eqref{binomgen} in the equivalent brief form
\begin{equation}\label{binomgen2}
(X+Y)^m=\sum_{\ell=0}^m \sum_{\lambda \in \mathcal{I}_{m-\ell,\ell}}Y^{m-\ell-\lambda_1}\omega_{\lambda},
\end{equation}
and it is clear that the main work consists in normal ordering $\omega_{\lambda}$. Note that all the considerations up to now were independent from the commutation relation holding for $X$ and $Y$. 

Now, we use the commutation relation. The words $\omega_{\lambda}$ have a special structure when compared to the general form considered in Theorem~\ref{THNOORE}. Here all $m_j=1$, for $j=1,\ldots,\ell$, thus $|{\bf m}|=|(1,1,\ldots,1)|=\ell$. Similarly, $|{\bf n}|=|(\lambda_1-\lambda_2,\lambda_2-\lambda_3,\ldots,\lambda_{\ell-1}-\lambda_{\ell},\lambda_{\ell} )|=\lambda_1$. 
Thus, \eqref{NOORE2} implies 
$$
\omega_{\lambda} = \sum_{r=0}^{\ell }\sum_{t=0}^{\min(\lambda_1, r)} m_{t,r-t}(B_{\omega_{\lambda}};q) Y^{\lambda_1-t}X^{\ell-r}. 
$$
Inserting this into \eqref{binomgen2} and switching to $k=\ell - r$ yields
$$
(X+Y)^m=\sum_{\ell=0}^m \sum_{\lambda \in \mathcal{I}_{m-\ell,\ell}}\sum_{k=0}^{\ell }\sum_{t=0}^{\min(\lambda_1, \ell - k)} m_{t,\ell -k-t}(B_{\omega_{\lambda}};q) Y^{m-\ell-t}X^{k}.
$$
Switching the order of summation, this shows the following result.
\begin{theorem}\label{THOREBINOM} Let $X$ and $Y$ satisfy the commutation relation \eqref{CROre} of the $q$-deformed generalized Ore algebra ${\mathcal O}_{\mu,\nu}(q)$. Then one has, for $m\in \mathbb{N}$, the normal ordered binomial formula,
\begin{equation}\label{OREBINOM}
 (X+Y)^m=\sum_{k=0}^{m}\sum_{\ell=k}^m \sum_{t=0}^{\ell - k} {\mathfrak O}_{m,k,\ell,t}(q) \, Y^{m-\ell-t}X^{k},
\end{equation}
where the combinatorial coefficients ${\mathfrak O}_{m,k,\ell,t}(q)$ are defined by
$$
{\mathfrak O}_{m,k,\ell,t}(q)=\sum_{\lambda \in \mathcal{I}_{m-\ell,\ell}} m_{t,\ell -k-t}(B_{\omega_{\lambda}};q).
$$
\end{theorem}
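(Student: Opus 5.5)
The statement is essentially a reorganisation of the expansion derived just before it, so the plan is to make that derivation rigorous and then check the index ranges carefully. First I will expand $(X+Y)^m$ in ${\mathcal O}_{\mu,\nu}(q)$ as the sum over all $2^m$ binary words of length $m$. This uses only distributivity, not the commutation relation. Then Theorem~\ref{thr1} and \eqref{eqbij1} re-index these words by $\ell\in\{0,\ldots,m\}$ (the number of letters $X$) and $\lambda\in\mathcal{I}_{m-\ell,\ell}$. This gives \eqref{binomgen2}, with each word written as $Y^{m-\ell-\lambda_1}\omega_\lambda$.

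Next I will normal order each $\omega_\lambda$ using Theorem~\ref{THNOORE} in the form \eqref{NOORE2}. For $\omega_\lambda$ one has $|{\bf m}|=\ell$ and $|{\bf n}|=\lambda_1$. Here I will note that the upper limit $|{\bf m}|-m_1$ in \eqref{NOORE2} may safely be replaced by $\ell$, because any extra terms carry mixed placement numbers of placements that do not exist, so they vanish. Concretely, $m_{t,r-t}(B;q)=0$ whenever $\mathcal{M}_{t,r-t}(B)=\emptyset$, and the last $X$ of $\omega_\lambda$ corresponds to a column of height $\lambda_\ell$, which may be empty. Left-multiplying by $Y^{m-\ell-\lambda_1}$ just merges the $Y$-powers, since this factor already stands to the left. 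The result is $Y^{m-\ell-t}X^{\ell-r}$ with coefficient $m_{t,r-t}(B_{\omega_\lambda};q)$.

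Finally I will substitute $k=\ell-r$ and interchange the finite sums to bring $k$ outermost. The constraints $0\le k\le\ell\le m$ become $k\in\{0,\ldots,m\}$ and $\ell\in\{k,\ldots,m\}$. The condition $t\le\min(\lambda_1,\ell-k)$ depends on $\lambda$, so I cannot keep it when moving the $\lambda$-sum inside. This is the only point needing care. I will relax it to $0\le t\le \ell-k$, which is independent of $\lambda$. The justification is that if $t>\lambda_1$, then $B_{\omega_\lambda}$ has only $\lambda_1$ rows, so it admits no placement of $t$ non-attacking rooks and $m_{t,\ell-k-t}(B_{\omega_\lambda};q)=0$. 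With $t$ now independent of $\lambda$, the $\lambda$-sum moves innermost and collects exactly into ${\mathfrak O}_{m,k,\ell,t}(q)$, which yields \eqref{OREBINOM}.

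I expect no real obstacle. The main point to watch is the bookkeeping of summation ranges, namely the vanishing of mixed placement numbers outside their natural range, which lets all limits be made uniform in $\lambda$.
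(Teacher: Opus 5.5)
Your proposal is correct and follows the paper's own route: you expand $(X+Y)^m$ through the bijection of Theorem~\ref{thr1} into $\sum_{\ell}\sum_{\lambda\in\mathcal{I}_{m-\ell,\ell}}Y^{m-\ell-\lambda_1}\omega_\lambda$, normal order each $\omega_\lambda$ by \eqref{NOORE2} with $|{\bf m}|=\ell$ and $|{\bf n}|=\lambda_1$, substitute $k=\ell-r$, and interchange the sums. Your explicit checks that the extra terms vanish, namely that $t\le\min(\lambda_1,\ell-k)$ may be relaxed to $t\le\ell-k$ because $B_{\omega_\lambda}$ has only $\lambda_1$ rows, and that the upper limit may be taken to be $\ell$, supply the justification the paper leaves implicit under ``switching the order of summation''.
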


\begin{example} Consider the quantum plane ${\mathcal O}_{0,0}(q)$. From the definition, it is clear that only $m_{0,0}(B_{\omega_{\lambda}};q)=q^{|\lambda |}$ contributes, i.e., $t=0$ and $\ell = k$. Using $\sum_{\lambda \in \mathcal{I}_{m-k,k}}q^{|\lambda|}=\binom{m}{k}_q $, \eqref{OREBINOM} yields 
$$
(X+Y)^m=\sum_{k=0}^m  \sum_{\lambda \in \mathcal{I}_{m-k,k}}q^{|\lambda|} \, Y^{m-k}X^{k}=\sum_{k=0}^m \binom{m}{k}_q \, Y^{m-k}X^{k},
$$
i.e., the $q$-binomial formula due to Potter \cite{HP1950} and Schützenberger \cite{MS1953}.
\end{example}

\section{Extension to the $q$-deformed polynomial Weyl algebra}\label{Polynomial}
Varvak \cite{AV2005} gave an interpretation of the normal ordering coefficients of words in $X$ and $Y$ satisfying $XY=YX+hY^s$ (with $s\in \mathbb{N}_0$) in terms of the $s$-rook numbers introduced by Goldman and Haglund \cite{JGJH2000}. This was extended to the $q$-deformed case by Celeste et al. \cite{CCG2017}. We briefly recall this.

Let $i\in \mathbb{N}_0$ and let $B$ be a Ferrers board. The {\em $i$-row creation rule} of Goldman and Haglund \cite{JGJH2000} for rook placements means that each time we choose to place a rook, we create $i$ new rows to the left of the box. The rooks are placed on the Board $B$ going from right to left. When creating $i$ new rows, we interpret the resulting top row as the row belonging to the rook just placed while the $i$ rows beneath it as new rows. See Figure~\ref{FigFile2} for an example with a 1-row creation rule. Note that we indicated in the rook which row creation rule was used.

\begin{figure}[h]
\centering
\begin{tikzpicture}

\draw (0,2.5) -- (2.5,2.5);
\draw (0,1) -- (2.5,1);
\draw (0,0.5) -- (1.5,0.5);
\draw (0,0) -- (1,0);

\draw (0,0) -- (0,2.5);
\draw (0.5,0) -- (0.5,2.5);
\draw (1,0) -- (1,2.5);
\draw (1.5,0.5) -- (1.5,2.5);
\draw (2,1) -- (2,2.5);
\draw (2.5,1) -- (2.5,2.5);

\draw[dashed] (0,1.75) -- (1.5,1.75);
\draw[dashed] (0,1.375) -- (1,1.375);
\draw[dashed] (0,0.75) -- (0.5,0.75);

\draw (1.625,2.125) rectangle (1.875, 2.375);
\node at (1.75, 2.25) {\footnotesize 1};

\draw (1.125,1.375) rectangle (1.375, 1.625);
\node at (1.25, 1.5) {\footnotesize 1};

\draw (0.625,0.75) rectangle (0.875, 0.95);
\node at (0.75, 0.86) {\footnotesize 1};

\node at (3.5,1.5) {$\rightsquigarrow$};
\draw (4.5,2.5) -- (7,2.5);
\draw (4.5,2) -- (7,2);
\draw (4.5,1.5) -- (6,1.5);
\draw (4.5,1) -- (6,1);
\draw (4.5,0.5) -- (5.5,0.5);
\draw (4.5,0) -- (5.5,0);
\draw (4.5,-0.5) -- (5,-0.5);

\draw (4.5,-0.5) -- (4.5,2.5);
\draw (5,-0.5) -- (5,2.5);
\draw (5.5,0) -- (5.5,2.5);
\draw (6,1) -- (6,2.5);
\draw (6.5,2) -- (6.5,2.5);
\draw (7,2) -- (7,2.5);

\draw (6.125,2.125) rectangle (6.375, 2.375);
\node at (6.25, 2.25) {\footnotesize 1};

\draw (5.625,1.625) rectangle (5.875, 1.875);
\node at (5.75, 1.75) {\footnotesize 1};

\draw (5.125,0.625) rectangle (5.375, 0.875);
\node at (5.25, 0.75) {\footnotesize 1};

\end{tikzpicture}
\caption{A Ferrers board with a placement of 3 rooks satisfying the 1-row creation rule (left) and its equivalent visualization (right).}\label{FigFile2}
\end{figure} 

Following Goldman and Haglund \cite{JGJH2000}, we define the {\em $i$-rook numbers} as follows.
\begin{definition} Let $i\in \mathbb{N}_0$. Given a Ferrers board $B$, the {\em $k$-th $i$-rook number} $r_k^{(i)}(B)$ is the number of ways to place $k$ non-attacking rooks on the board $B$ going from right to left, creating $i$ new rows to the left of each rook.  
\end{definition}
For $i=0$, one recovers the conventional rook numbers. For $i=1$, each time we place a rook, we create a new row to the left. Thus, for the next rook to place, in each column to the left one has the same amount of boxes to choose from as the column had before. In other words, the placement of $k$ rooks with 1-row creation rule corresponds to a $k$-file placement. Thus,  
\begin{equation}\label{RookRed}
r_k^{(0)}(B)=r_k(B), \,\,\,\, r_k^{(1)}(B)=f_k(B). 
\end{equation}
\begin{definition}A rook satisfying the $i$-row creation rule will be called a {\em rook of weight $i$}. In particular, a conventional rook will be called a rook of weight 0, while a file will be called a rook of weight 1. 
\end{definition}
Using this terminology, we say that 3 rooks of weight 1 have been placed on the board in Figure~\ref{FigFile2}.

Let us consider a word $\mathrm{w}_{{\bf m}, {\bf n}}=Y^{n_r}X^{m_r}\cdots Y^{n_1}X^{m_1}$ where $XY=YX+hY^s$. To this word we associate a Ferrers board $B_{\mathrm{w}_{{\bf m}, {\bf n}}}$ as described above. In this situation, Varvak \cite[Theorem 7.1]{AV2005} showed that (after correcting a typo) the normal ordered form of $\mathrm{w}_{{\bf m}, {\bf n}}$ is given in terms of $s$-rook numbers by
\begin{equation}\label{RookVarvak0}
\mathrm{w}_{{\bf m}, {\bf n}}=\sum_{j=0}^{|{\bf m}|-m_1} h^j r_j^{(s)}(B_{\mathrm{w}_{{\bf m}, {\bf n}}}) Y^{|{\bf n}|+(s-1)j}X^{|{\bf m}|-j}.
\end{equation}
Recall the procedure from Section~\ref{Normal} in the new terminology: For the $q$-deformed generalized Ore algebra ${\mathcal O}_{\alpha_0,\alpha_1}(q)$ where $XY=qYX+\alpha_0 I + \alpha_1 Y$ one has in the process of normal ordering for each subword $XY$ to decide between the following three possibilities, corresponding to the terms on the right-hand side of the commutation relation:
\begin{itemize}
\item leave it empty (corresponding to $XY \rightsquigarrow q YX$), or
\item place a rook of weight 0 (corresponding to $XY \rightsquigarrow \alpha_0 I$), or
\item place a rook of weight 1 (corresponding to $XY \rightsquigarrow \alpha_1 Y$).
\end{itemize}
The above-mentioned result of Varvak shows that the term $\alpha_s Y^s$ on the right-hand side of the commutation relation corresponds to the placement of a rook of weight $s$ (corresponding to $XY \rightsquigarrow \alpha_s Y^s$). Thus, to describe the normal ordering in $\mathcal{A}_{f}(q)$ where one has
\begin{equation}\label{WeylPoly}
XY=qYX+\alpha_0 I+ \alpha_1 Y+\alpha_2 Y^2+\cdots +\alpha_s Y^s,
\end{equation}
we have to bring these observations together. Roughly speaking, each time we come across a subword $XY$ we can choose between $(s+2)$ possibilities: Either do not place a rook (corresponding to $XY \rightsquigarrow q YX$), or place a rook of weight $k$ (corresponding to $XY \rightsquigarrow \alpha_k Y^k$), where $k=0,1,\ldots,s$. Then one has to sum over all possibilities of placing rooks of weights $k$ with $k=0,\ldots,s$ (or not). We now formalize these notions. Let us introduce, following Goldman and Haglund \cite{JGJH2000}, the following notion.

\begin{definition}\label{MixedPlGen} Let $B$ be a Ferrers board. Let $s \in \mathbb{N}_0$, and let $k_j \in \mathbb{N}_0$, for $j=0,\ldots , s $. A {\em mixed rook placement of type ${\bf k} \equiv (k_0,\ldots,k_s)$ on $B$} is a non-attacking placement $\phi$ of $k_j$ rooks with weights $j$, $j=0,\ldots ,s$, on $B$, going from right to left, creating $j$ rows to the left of each rook of weight $j$. The set of all mixed rook placements of type ${\bf k}$ on $B$ will be denoted by ${\mathcal M}_{{\bf k}}(B)$ or ${\mathcal M}_{k_0,\ldots,k_s}(B)$.
\end{definition}
\begin{example} In Figure~\ref{FigMPL}, a mixed rook placement of type $(1,1,0,1)$ on the staircase board $J_5$ is shown (in the equivalent visualization like on the right-hand side of Figure~\ref{FigFile2}). 

\begin{figure}[h]
\centering
\begin{tikzpicture}

\draw (4.5,2.5) -- (7,2.5);
\draw (4.5,2) -- (6.5,2);
\draw (4.5,1.5) -- (6,1.5);
\draw (4.5,1) -- (6,1);
\draw (4.5,0.5) -- (6,0.5);
\draw (4.5,0) -- (6,0);
\draw (4.5,-0.5) -- (5.5,-0.5);
\draw (4.5,-1) -- (5.5,-1);
\draw (4.5,-1.5) -- (5,-1.5);

\draw (4.5,-2) -- (4.5,2.5);
\draw (5,-1.5) -- (5,2.5);
\draw (5.5,-1) -- (5.5,2.5);
\draw (6,0) -- (6,2.5);
\draw (6.5,2) -- (6.5,2.5);

\draw (6.125,2.125) rectangle (6.375, 2.375);
\node at (6.25, 2.25) {\footnotesize 3};

\draw (5.625,0.625) rectangle (5.875, 0.875);
\node at (5.75, 0.75) {\footnotesize 1};

\draw (5.125,-0.375) rectangle (5.375, -0.125);
\node at (5.25, -0.25) {\footnotesize 0};

\node at (4.75,-0.25) {$\mbox{X}$};


\draw (9.5,2.5) -- (12,2.5);
\draw (9.5,2) -- (11.5,2);
\draw (9.5,1.5) -- (11,1.5);
\draw (9.5,1) -- (11,1);
\draw (9.5,0.5) -- (11,0.5);
\draw (9.5,0) -- (11,0);
\draw (9.5,-0.5) -- (10.5,-0.5);
\draw (9.5,-1) -- (10.5,-1);
\draw (9.5,-1.5) -- (10,-1.5);

\draw (9.5,-2) -- (9.5,2.5);
\draw (10,-1.5) -- (10,2.5);
\draw (10.5,-1) -- (10.5,2.5);
\draw (11,0) -- (11,2.5);
\draw (11.5,2) -- (11.5,2.5);

\draw (11.125,2.125) rectangle (11.375, 2.375);
\node at (11.25, 2.25) {\footnotesize 3};

\draw (10.625,0.625) rectangle (10.875, 0.875);
\node at (10.75, 0.75) {\footnotesize 1};

\draw (10.125,-0.375) rectangle (10.375, -0.125);
\node at (10.25, -0.25) {\footnotesize 0};

\node at (9.75, 2.25) {$\mbox{X}$};
\node at (9.75, 0.75) {$\mbox{X}$};
\node at (9.75,-0.25) {$\mbox{X}$};

\node at (10.25,2.25) {$\mbox{X}$};
\node at (10.25,1.75) {$\mbox{X}$};
\node at (10.25,1.25) {$\mbox{X}$};
\node at (10.25,0.75) {$\mbox{X}$};
\node at (10.25,0.25) {$\mbox{X}$};

\node at (10.75,2.25) {$\mbox{X}$};
\node at (10.75,1.75) {$\mbox{X}$};
\node at (10.75,1.25) {$\mbox{X}$};

\end{tikzpicture}
\caption{The staircase board $J_5$ with a mixed rook placement of type $(1,1,0,1)$ (left), and where the boxes are marked according to their class (right).}\label{FigMPL}
\end{figure} 
\end{example}
Note that in the definition of a mixed rook placement no particular order of the rooks of different weights is assumed. A mixed rook placement of type ${\bf k} = (0,\ldots,0,k_{\ell},0,\ldots,0)$ is a non-attacking placement of $k_{\ell}$ rooks of weight $\ell$ and will also be called a {\em pure rook placement of degree $\ell$}. Thus, 
\begin{equation}\label{RedVarv}
|{\mathcal M}_{0,\ldots,0,k_{\ell},0,\ldots,0}(B)|=r_{k_{\ell}}^{(\ell)}(B).
\end{equation}
By construction, the above placements generalize the mixed placements of rooks and files we considered in Section~\ref{Normal}. Namely, a mixed rook placement of type $(k_0,k_1,0,\ldots,0)$ is a non-attacking placement of $k_{0}$ rooks and $k_1$ files, i.e., 
\begin{equation}\label{RedOre}
{\mathcal M}_{k_0,k_1,0,\ldots,0}(B)={\mathcal M}_{k_0,k_1}(B).
\end{equation}

Let $B$ be a Ferrers board and let $\phi \in \mathcal{M}_{k_0,\ldots,k_s}(B)$. We partition the boxes of $B$ into $(s+3)$ classes:
\begin{enumerate}
\item A box is called a {\em rook box of weight $j$} if a rook of weight $j$ is placed in it, for $j=0,\ldots,s$,
\item A box is called a {\em cancelled box}, if it is neither a rook box (irrespective of weight), and it is lying above a rook in the same column or to the left of a rook in the same row.
\item All remaining boxes are {\em empty boxes}. 
\end{enumerate}
Let us abbreviate the set of parameters by ${\bf \alpha}=(\alpha_0,\dots,\alpha_s)$ and use multi-index notation,
$$
{\bf  \alpha}^{{\bf k}}=  \prod_{j=0}^s \alpha_j^{k_j}.
 $$
\begin{definition}
The {\em $q$-weight of the mixed placement} $\phi \in \mathcal{M}_{{\bf k}}(B)$ is defined to be 
\begin{equation}
w_{{\bf \alpha};q}(B,\phi)=q^{\# \mbox{empty boxes}} \prod_{j=0}^s \alpha_j^{\# \mbox{rook boxes of weight $j$}}= {\bf  \alpha}^{{\bf k}} q^{\# \mbox{empty boxes}}.
\end{equation}
\end{definition}
\begin{remark}
The definition of the $q$-weights reduces for $\phi$ of type $(k_0=\mu,k_1=\nu,0,\ldots,0)$ to the definition of $w_{\mu,\nu;q}(B,\phi)$ for the $q$-deformed generalized Ore algebra. However, it seems that our definition of the classes of the boxes does not coincide with the one given there. This impression is false and the reason is that we use here for a placement of a file the 1-rule creation rule, where we add a new row to the left of the file and interpret the row of the file as filled. In Section~\ref{Normal}, we did not create a new row, but allowed placements of files and rooks to the left of the file in that row.
\end{remark}

For example, the mixed rook placement of type $(1,1,0,1)$ shown in Figure~\ref{FigMPL} has $q$-weight $q^7 \alpha_0\alpha_1\alpha_3$. Let us turn to the connection to normal ordering. The staircase board $J_5$ is derived from the word $(YX)^5$. The particular sequence of contractions and commutations described by the mixed rook placement shown in Figure~\ref{FigMPL} contains one placement of a rook of weight 0 ($XY \rightsquigarrow \alpha_0 I$), one placement of a rook of weight 1 ($XY \rightsquigarrow \alpha_1Y$), and one placement of a rook of weight 3 ($XY \rightsquigarrow \alpha_3 Y^3$). Thus, the resulting normal ordered expression is $q^7 \alpha_0\alpha_1\alpha_3 Y^{5-1+0+2}X^{5-1-1-1}=q^7 \alpha_0\alpha_1\alpha_3 Y^{6}X^{2}$. In general, we have the following result.
\begin{lemma} Let a word $\mathrm{w}_{{\bf m}, {\bf n}}=Y^{n_r}X^{m_r}\cdots Y^{n_1}X^{m_1}$ in ${\mathcal A}_f(q)$ be given where $f(Y)=\sum_{j=0}^s \alpha_jY^j$. Let $\phi \in {\mathcal M}_{{\bf k}}(B_{\mathrm{w}_{{\bf m}, {\bf n}}})$ be a mixed rook placement of type ${\bf k}=(k_0,\ldots,k_s)$. Then the corresponding summand has the normal ordered form
\begin{equation}\label{WORD}
q^{\ell} {\bf  \alpha}^{{\bf k}} \, Y^{|{\bf n}|+\sum_{j=0}^s (j-1)k_j}X^{|{\bf m}|-|{\bf k}|},
\end{equation}
where $|{\bf k}|=k_0 +\cdots + k_s$ and the exponent $\ell$ depends on the concrete placement of the rooks on $B_{\mathrm{w}_{{\bf m}, {\bf n}}}$.
\end{lemma}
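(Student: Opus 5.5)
The plan is to follow the bookkeeping of a single normal-ordering sequence, as in Example~\ref{ExampleNOOre} and Varvak's argument behind \eqref{RookVarvak0}. A fixed placement $\phi\in{\mathcal M}_{{\bf k}}(B_{\mathrm{w}_{{\bf m},{\bf n}}})$ encodes one branch of the rewriting process. The branch starts from $\mathrm{w}_{{\bf m},{\bf n}}$. At each step one selects the rightmost subword $XY$, which is the rightmost corner of the current board, and replaces it by exactly one term of \eqref{WeylPoly}. If the corner is empty, the replacement is $qYX$; if it carries a rook of weight $j$, the replacement is $\alpha_jY^j$. The resulting word is a monomial times a product of letters, and I will compute its prefactor and its letter counts separately.

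First the letter counts, by induction on the number of rewriting steps. A $q$-commutation $XY\rightsquigarrow qYX$ does not change the number of $X$'s or of $Y$'s. A rook of weight $j$ implements $XY\rightsquigarrow\alpha_jY^j$. This removes one $X$ and one $Y$ and then inserts $j$ new $Y$'s, so the number of $Y$'s changes by $j-1$ and the number of $X$'s decreases by one. Exactly $k_j$ rooks of weight $j$ are placed, for $j=0,\ldots,s$. Summing gives a final $Y$-count of $|{\bf n}|+\sum_{j}(j-1)k_j$ and an $X$-count of $|{\bf m}|-|{\bf k}|$. The coefficient collects one factor $\alpha_j$ per rook of weight $j$, i.e.\ ${\bf\alpha}^{{\bf k}}$, together with one factor $q$ per commutation. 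These commutations are counted by the exponent $\ell$, which is the number of empty boxes and is not specified further in the statement.

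Second, the final word must be in normal form $Y^aX^b$. By construction the rewriting terminates exactly when no subword $XY$ remains, and such a word is normal ordered. The step that needs the most care is showing that the placement $\phi$, read from right to left with the $j$-row creation rule, really corresponds to a legal and complete rewriting sequence. Concretely, the $j$ new $Y$'s created by $\alpha_jY^j$ must appear as the $j$ new rows to the left of the rook in the updated board. The cancelled boxes, those above a rook in its column or to the left of it in its row, must be exactly the corners that disappear because the corresponding $X$ or $Y$ was consumed. I would establish this by tracking the board of the current word after each step, as in Figure~\ref{FigFile2}. This is the same correspondence already used for \eqref{RookVarvak0} and for Theorem~\ref{THNOORE}, and it is applied one rook at a time, so I can take it as given. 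For the present statement only the exponent count matters, and that count is independent of the geometric details. This gives \eqref{WORD}.
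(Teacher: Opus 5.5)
Your proposal is correct and is essentially the paper's argument: $q$-commutations leave the letter counts unchanged, and each rook of weight $j$ contributes a factor $\alpha_j$, removes one $X$ and changes the number of $Y$'s by $j-1$. Summing over the $k_j$ rooks of each weight gives ${\bf \alpha}^{{\bf k}}Y^{|{\bf n}|+\sum_{j}(j-1)k_j}X^{|{\bf m}|-|{\bf k}|}$, up to a power of $q$ that comes from the empty boxes. Your extra remarks on termination in normal form and on the correspondence between board and rewriting sequence go a little beyond what the paper writes, but the paper likewise takes that correspondence as given.
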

\begin{proof}
Each placement of a rook of weight $j$ gives a factor of $\alpha_j$ and reduces the number of letters $X$ by one and changes the number of letters $Y$ by $(j-1)$. Thus, the placement of $k_j$ rooks of weight $j$ gives the factor $\alpha_j^{k_j}$, reduces the number of letters $X$ by $k_j$ and changes the number of letters $Y$ by $(j-1)k_j$. Considering all different weights yields the right-hand side up to the power of $q$. This comes from the number of empty boxes and depends on the concrete placement of the rooks.  
\end{proof}
Note that if $\phi$ is pure of degree $s$, i.e., one has a placement of $j$ rooks of weight $s$, the expression on the right-hand side of \eqref{WORD} reduces  for $q=1$ to $\alpha_s^{j} Y^{|{\bf n}|+(s-1)j}X^{|{\bf m}|-j}$, recovering the structure of the summands on the right-hand side of \eqref{RookVarvak0}.  

Before we can collect all summands over all possibilities of non-attacking mixed rook placements, we extend the $q$-deformed mixed placement numbers from Definition~\ref{DefOREMPN} to the present situation. 
\begin{definition}\label{DefMPN}
The {\em $q$-deformed mixed placement numbers of type ${\bf k}=(k_0,\ldots,k_s)$} are defined by 
\begin{equation}
m_{{\bf k}}(B;q)\equiv m_{{\bf k}}^{\bf  \alpha}(B;q)=\sum_{\phi \in {\mathcal M}_{{\bf k}}(B)} w_{{\bf \alpha};q}(B,\phi)=  {\bf  \alpha}^{{\bf k}} \sum_{\phi \in {\mathcal M}_{{\bf k}}(B)} q^{\# \mbox{empty boxes}}.
\end{equation}
\end{definition}
For $q=1$, this reduces to $m_{{\bf k}}(B;1)={\bf  \alpha}^{{\bf k}}|{\mathcal M}_{{\bf k}}(B)|$. 
Now, we have to sum over all possible types of placements. For $k,s\in \mathbb{N}_0$, we let
$$
{\mathcal WC}_{s+1}(k)=\left\{{\bf k} \equiv (k_0,\ldots,k_s) \in \mathbb{N}_0^{s+1} \, | \, |{\bf k} | \equiv \sum_{j=0}^s k_j = k \right\},
$$
i.e., the set of weak compositions of $k$ with $(s+1)$ parts. 
\begin{theorem}\label{THNOWEYLPOLY} Let $\mathrm{w}_{{\bf m}, {\bf n}}=Y^{n_r}X^{m_r}\cdots Y^{n_1}X^{m_1}$ be a word in ${\mathcal A}_f(q)$ where $f(Y)=\sum_{j=0}^s \alpha_jY^j$. Then the normal ordered form of $\mathrm{w}_{{\bf m}, {\bf n}}$ is given by
\begin{equation}\label{NOWEYLPOLY}
\mathrm{w}_{{\bf m}, {\bf n}}=\sum_{k=0}^{|{\bf m}|-m_1} \sum_{{\bf k} \in {\mathcal WC}_{s+1}(k)} m_{{\bf k}}(B_{\mathrm{w}_{{\bf m}, {\bf n}}};q) \, Y^{|{\bf n}|+\sum_{j=0}^s (j-1)k_j} X^{|{\bf m}|- k}.
\end{equation}
\end{theorem}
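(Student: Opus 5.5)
The plan is to prove \eqref{NOWEYLPOLY} by formalizing the normal ordering procedure and showing that its branches are in weight-preserving bijection with mixed rook placements. The main tool is the preceding Lemma, which fixes the monomial produced by a placement of type ${\bf k}$ up to the power of $q$.

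\emph{The procedure.} Start from $\mathrm{w}_{{\bf m}, {\bf n}}$ and repeatedly pick the rightmost subword $XY$. Replace it by the right-hand side of \eqref{WeylPoly}, which gives $s+2$ branches: $qYX$, or $\alpha_j Y^j$ for $j=0,\ldots,s$. Every branch strictly decreases the number of inversions (pairs of an $X$ to the left of a $Y$), so the expansion terminates. The result is a sum of normal ordered monomials indexed by the leaves of a finite branching tree. Each leaf is a finite sequence of choices.

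\emph{The bijection.} Go through the board from right to left, as in Section~\ref{Normal} and in Varvak's argument. At every stage the current word outlines a Ferrers board. This board is exactly $B_{\mathrm{w}_{{\bf m}, {\bf n}}}$ with the columns and rows already processed deleted, and with $j$ new rows inserted to the left of a processed box whenever the choice there was $\alpha_j Y^j$.
\begin{itemize}
\item A $q$-commutation at a corner moves the corner; it corresponds to leaving that box empty.
\item A choice $\alpha_j Y^j$ removes one $X$ (its column) and replaces the $Y$ by $j$ letters $Y$. This is the $j$-row creation rule: the rook's own row is consumed and $j-1$ further rows appear to its left. Equivalently, in the convention of Definition~\ref{MixedPlGen}, $j$ rows are created, the top one being the rook's row.
\end{itemize}
Induction on the number of inversions then shows two things. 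First, the leaves of the tree are in bijection with ${\mathcal M}_{{\bf k}}(B_{\mathrm{w}_{{\bf m}, {\bf n}}})$, summed over all types ${\bf k}$. Second, the boxes that get $q$-commuted are exactly the empty boxes, since cancelled boxes disappear together with the deleted row or column. Hence the leaf contributes $w_{{\bf \alpha};q}(B,\phi)$ times the monomial of \eqref{WORD}.

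\emph{Summing.} Group the leaves by type ${\bf k}$ with $|{\bf k}|=k$. The coefficient of the monomial $Y^{|{\bf n}|+\sum_j (j-1)k_j}X^{|{\bf m}|-k}$ is then $m_{{\bf k}}(B;q)$, by Definition~\ref{DefMPN}. For the range of $k$: the last block $X^{m_1}$ contributes columns of height $0$, so no rook can be placed there and $k\le|{\bf m}|-m_1$. This gives \eqref{NOWEYLPOLY}.

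\emph{Main obstacle.} The crux is the inductive step of the bijection. After a rook of weight $j$ with $j\ge2$ is placed, the remaining board must coincide exactly with the one read off from the new word, so that the process can continue on a genuine Ferrers board. In particular, the created rows must sit to the left of the rook and be counted in the empty-box statistic later on. I would verify this by writing the word locally as $\cdots X^aY^b\,XY\,\cdots$. After the replacement it becomes $\cdots X^aY^{b+j}\cdots$, which gives $j$ new rows in the columns to the left, of the right length, matching the Goldman–Haglund rule. For $j\le1$ this reduces to the argument already used for Theorem~\ref{THNOORE}.
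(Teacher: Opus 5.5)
Your proposal follows the same route as the paper. The paper's proof simply asserts that the branches of the normal-ordering expansion are the mixed rook placements, groups them by type ${\bf k}$, and reads off the monomial from the Lemma giving \eqref{WORD}. Your version adds the bookkeeping the paper leaves implicit, including a correct justification of the upper limit $|{\bf m}|-m_1$. However, two statements in it are wrong as written. Both are local and easy to repair.

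\textbf{Termination.} The claim that every branch strictly decreases the number of inversions is false for $j\ge 2$. For example, $XXY\rightsquigarrow\alpha_3\,XY^3$ raises the count from $2$ to $3$. In general the change is $ja-(a+b+1)$, where $a$ is the number of letters $X$ to the left of the contracted pair and $b$ the number of letters $Y$ to its right. Use instead the pair (number of $X$'s, number of inversions) in lexicographic order. A contraction lowers the first entry, and a $q$-commutation keeps the first entry and lowers the second.

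\textbf{Row count.} Your bullet on $\alpha_jY^j$ is off by one. The rook's own row is lost (in the paper's picture it remains as a cancelled row), and $j$ new rows appear to its left, not $j-1$, for a net change of $j-1$. Likewise, in the paper's convention the rook's row is kept as the top, cancelled row and $j$ new rows are created beneath it; it is not the case that $j$ rows are created with the rook's row among them. With your count, a weight-$1$ rook would cancel its row without replacement, so it would behave like an ordinary rook rather than a file, and the bijection with ${\mathcal M}_{{\bf k}}(B)$ would fail. Your own local check $\cdots X^aY^b\,XY\cdots\rightsquigarrow\cdots X^aY^{b+j}\cdots$ gives the correct count of $j$ new rows, so only that sentence needs fixing.
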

\begin{proof}
The normal ordered form of $\mathrm{w}_{{\bf m}, {\bf n}}$ is given as the sum over all possible non-attacking mixed rook placements, where for each placement we have a normal ordered summand. To take care of all such mixed placements, we sum over all possible types of placements, explaining the double sum in \eqref{NOWEYLPOLY}. For each placement of a fixed type, the structure of the normal ordered summand was determined in \eqref{WORD}. The sum over all placements of a fixed type gives the combinatorial prefactor $m_{{\bf k}}(B_{\mathrm{w}_{{\bf m}, {\bf n}}};q) $. 
\end{proof}
Let us consider the case $s=1$ of \eqref{NOWEYLPOLY}. Then ${\bf k}=(k_0,k_1) \in {\mathcal WC}_{2}(k)$ means that $k_0+k_1=k$. Letting $t \equiv k_0$, one has ${\bf k}=(t,k-t)$ and $\sum_{j=0}^1(j-1)k_j=-k_0=-t$. Hence, \eqref{NOWEYLPOLY} reduces to
$$
\mathrm{w}_{{\bf m}, {\bf n}}=\sum_{k=0}^{|{\bf m}|-m_1} \sum_{t=0}^{\min(|{\bf n}|,k)} m_{t,k-t}(B_{\mathrm{w}_{{\bf m}, {\bf n}}};q) \, Y^{|{\bf n}|-t} X^{|{\bf m}|- k},
$$ 
recovering \eqref{NOORE2}.
\begin{corollary} Let $XY=qYX+\alpha_{\ell}Y^{\ell}$. Then \eqref{NOWEYLPOLY} reduces to the $q$-deformed version of \eqref{RookVarvak0} given by Celeste et al. \cite{CCG2017}.
\end{corollary}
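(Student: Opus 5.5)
Specialize Theorem~\ref{THNOWEYLPOLY} to $f(Y)=\alpha_\ell Y^\ell$, i.e., $\alpha_j=0$ for $j\neq \ell$, and compare the result with the formula of Celeste et al.\ \cite{CCG2017}. Concretely, take $s=\ell$ in \eqref{NOWEYLPOLY}. Since $m_{{\bf k}}(B;q)$ carries the factor ${\bf \alpha}^{{\bf k}}=\prod_j\alpha_j^{k_j}$, every type ${\bf k}$ with some $k_j>0$ for $j\neq\ell$ contributes zero. In the inner sum over ${\mathcal WC}_{\ell+1}(k)$, only the composition ${\bf k}=(0,\ldots,0,k)$ with $k_\ell=k$ survives. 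For this type the exponent of $Y$ is $|{\bf n}|+(\ell-1)k$, and \eqref{NOWEYLPOLY} collapses to
$$
\mathrm{w}_{{\bf m},{\bf n}}=\sum_{k=0}^{|{\bf m}|-m_1} m_{(0,\ldots,0,k)}(B_{\mathrm{w}_{{\bf m},{\bf n}}};q)\, Y^{|{\bf n}|+(\ell-1)k}X^{|{\bf m}|-k}.
$$

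The next step is to identify the coefficient. By Definition~\ref{DefMPN},
$$
m_{(0,\ldots,0,k)}(B;q)=\alpha_\ell^{k}\sum_{\phi}q^{\#\text{empty boxes}},
$$
where $\phi$ ranges over pure rook placements of degree $\ell$. By \eqref{RedVarv} the underlying set is exactly the set of $k$-placements counted by $r^{(\ell)}_k(B)$. One must then check that the box classification used here, with boxes above a rook in its column or to the left of a rook in its row being cancelled and all other non-rook boxes empty, under the $\ell$-row creation rule, is the same statistic that Celeste et al.\ use to define their $q$-analogue $r^{(\ell)}_k(B;q)$ of the Goldman--Haglund $\ell$-rook numbers. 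Once this holds, the display above reads
$$
\mathrm{w}_{{\bf m},{\bf n}}=\sum_{k}\alpha_\ell^k\, r_k^{(\ell)}(B_{\mathrm{w}_{{\bf m},{\bf n}}};q)\,Y^{|{\bf n}|+(\ell-1)k}X^{|{\bf m}|-k},
$$
which is their result. As a consistency check, setting $q=1$ gives \eqref{RookVarvak0} with $h=\alpha_\ell$ and $s=\ell$.

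The main obstacle is this identification of $q$-statistics. Celeste et al.\ may count "uncancelled" cells in a board with created rows, or use an inversion-type statistic with a different convention for which created rows count. I would verify the match in two ways. First, derive from our weights the column-by-column recursion: adjoining a column of height $h$ to the left of a board with $j$ rooks already placed multiplies the generating function by a factor of the form $q^{h+\ell j- j}+\alpha_\ell[h+\ell j-j]_q$, as in the proof of Proposition~\ref{PROPQORESRec}. Second, check that their numbers satisfy the same recursion with the same initial conditions. If the conventions differ, for example by a global power of $q$ or a reversal $q\mapsto q^{-1}$, I would record the explicit translation rather than claim literal equality.
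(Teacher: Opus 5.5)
Your proposal is correct and follows the paper's proof. You specialize \eqref{NOWEYLPOLY} so that only the pure type $(0,\ldots,0,k_\ell=k,0,\ldots,0)$ survives, read off \eqref{RookVarvak0} at $q=1$, and identify $m_{0,\ldots,0,k,0,\ldots,0}(B;q)$ with the $q$-deformed $\ell$-rook number $R_{\ell,\alpha_\ell;q}[B,k]$ of \cite{CCG2017}; the paper simply asserts this last identification as their definition, since a pure placement of degree $\ell$ is exactly an element of their ${\mathcal R}_{\ell}(B,k)$ with the same empty-box statistic. The recursion check you propose is in effect carried out later in the paper, where Theorem~\ref{RecurrenceMain} in the pure case reduces to \cite[Theorem 7]{CCG2017}. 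There the relation is the two-term recurrence $R_{\ell,\alpha_\ell;q}[B_n(\lambda),k]=q^{\lambda_{n-1}+(\ell-1)k}R_{\ell,\alpha_\ell;q}[B_{n-1}(\lambda),k]+\alpha_\ell[\lambda_{n-1}+(\ell-1)(k-1)]_q R_{\ell,\alpha_\ell;q}[B_{n-1}(\lambda),k-1]$, not a single multiplicative factor, because the two options change the number of rooks differently.
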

\begin{proof}
In the case under consideration, all rook placements are pure of degree $\ell$, i.e., have the form ${\bf k} =(0,\ldots,0,k_{\ell},0,\ldots,0)$. From the set of weak compositions of $k$ exactly one member remains, $(0,\ldots,0,k_{\ell}=k,0,\ldots,0)$. For $q=1$, we observed above that, in general, $m_{{\bf k}}(B_{\mathrm{w}_{{\bf m}, {\bf n}}};1)={\bf  \alpha}^{{\bf k}}|{\mathcal M}_{{\bf k}}(B_{\mathrm{w}_{{\bf m}, {\bf n}}})|$ reducing for the  pure case to $\alpha_{\ell}^{k_{\ell}}|{\mathcal M}_{0,\ldots,0,k_{\ell},0,\ldots,0}(B_{\mathrm{w}_{{\bf m}, {\bf n}}})|$. Thus, using \eqref{RedVarv}, we find
$$
\mathrm{w}_{{\bf m}, {\bf n}}=\sum_{k=0}^{|{\bf m}|-m_1} \alpha_{\ell}^{k} r_{k}^{(\ell)}(B_{\mathrm{w}_{{\bf m}, {\bf n}}}) Y^{|{\bf n}|+(\ell-1)k} X^{|{\bf m}|- k},
$$
which is exactly \eqref{RookVarvak0}. 
A pure placement of degree $\ell$ of $k$ rooks on $B$ is exactly a rook placement of $k$ rooks satisfying the $\ell$-row creation rule, thus ${\mathcal M}_{0,\ldots,0,k_{\ell},0,\ldots,0}(B)={\mathcal R}_{\ell}(B,k_{\ell})$ in the notation of \cite{CCG2017}. Hence, for $q\neq 1$, we obtain from Definition~\ref{DefMPN} in the pure case 
\begin{equation}\label{RedPure}
m_{0,\ldots,0,k_{\ell},0,\ldots,0}(B_{\mathrm{w}_{{\bf m}, {\bf n}}};q)= \alpha_{\ell}^{k_{\ell}} \sum_{\phi \in {\mathcal R}_{\ell}(B_{\mathrm{w}_{{\bf m}, {\bf n}}},k_{\ell})} q^{\# \mbox{empty boxes}}\equiv R_{\ell, \alpha_{\ell};q}[B_{\mathrm{w}_{{\bf m}, {\bf n}}},k_{\ell}],
\end{equation}
where the second equation is the definition of the $q$-deformed $\ell$-rook numbers according to \cite{CCG2017}. Thus, 
$$
\mathrm{w}_{{\bf m}, {\bf n}}=\sum_{k=0}^{|{\bf m}|-m_1} R_{\ell, \alpha_{\ell};q}[B_{\mathrm{w}_{{\bf m}, {\bf n}}},k] Y^{|{\bf n}|+(\ell-1)k} X^{|{\bf m}|- k},
$$ 
which is exactly the $q$-deformed generalization of \eqref{RookVarvak0} given by Celeste et al. \cite{CCG2017}.
\end{proof}
\begin{remark}
As mentioned in the introduction, some basic normal ordering results for ${\mathcal A}_f(q)$ were derived in \cite{TMMS2011}. For $q=1$, several ordering results for ${\mathcal A}_f(1)$ can be found in \cite{BLO2013,BLO2015,BLO2015a}, see also \cite{BRLO2020,Lopes2024}.
\end{remark}
Recall that the normal ordering coefficients of the word $(YX)^n$ have been considered in different cases:
\begin{itemize}
\item Weyl algebra ($\alpha_0=1$): Stirling numbers of the second kind, $S(n,k)$, see \eqref{Scherk}.
\item Shift algebra ($\alpha_1=1$): Unsigned Stirling numbers of  the first kind, $|s(n,k)|$, see \eqref{ShiftScherk}.
\item Generalized Weyl algebra ($\alpha_s=1$): Generalized Stirling numbers ${\mathfrak S}_{s;1}(n,k)$ \cite{TMMSMS2011,TMMSMS2012}.
\item Ore algebra ($\alpha_0=\alpha_1=1$): Ore-Stirling numbers, see \eqref{QOreStirlingDef}.
\end{itemize}
For all these numbers, a $q$-deformed version has been considered as well. In particular, the definition of the $q$-deformed Ore-Stirling numbers in \eqref{QOreStirlingDef} motivates the following definition.
\begin{definition} Let $X$ and $Y$ satisfy the commutation relation \eqref{WeylPoly} of the $q$-deformed polynomial Weyl algebra ${\mathcal A}_f(q)$. Then the {\em $q$-deformed polynomial Stirling numbers of degree $s$}, ${\mathscr S}_{{\bf \alpha};q}(n;j,k)$, are defined as normal ordering coefficients of $(YX)^n$ in ${\mathcal A}_f(q)$,
$$
(YX)^n=\sum_{j=0}^{ns}\sum_{k=1}^n {\mathscr S}_{{\bf \alpha};q}(n;j,k )Y^j X^{k}. 
$$
\end{definition}
To see that $j$ is at most $ns$ note that the maximum is reached when, starting from $Y^n$, there are $(n-1)$ placements of a rook of weight $s$, giving $Y^{n+(n-1)s}=Y^{ns}$. Before we can give a description of the ${\mathscr S}_{{\bf \alpha};q}(n;j,k)$, we introduce a partition of  ${\mathcal WC}_{s+1}(k)$ into smaller pieces. Let us introduce, for $t=-k,\ldots,0,\ldots, (s-1)k$, the sets
$$
{\mathcal WC}_{s+1}(k|t)=\left\{{\bf k} \in {\mathcal WC}_{s+1}(k) \, | \, \sum_{j=0}^s (j-1)k_j = t \right\}.
$$
The minimal value $t=-k$ is reached by the placement ${\bf k}=(k_0=k,0,\ldots,0)$ while the maximal value $t=(s-1)k$ is reached by the placement ${\bf k}=(0,0,\ldots,0,k_s=k)$. Thus, we have a partition
\begin{equation}\label{partition}
{\mathcal WC}_{s+1}(k)=\bigcup_{t=-k}^{(s-1)k} {\mathcal WC}_{s+1}(k|t).
\end{equation}
Now, we can formulate the following result. 
\begin{proposition} The $q$-deformed polynomial Stirling numbers of degree $s$ are given by 
\begin{equation}\label{PolyStir}
{\mathscr S}_{{\bf \alpha};q}(n;j,k)=\sum_{{\bf k} \in {\mathcal WC}_{s+1}(n-k|j-n)} m_{{\bf k}}(J_n;q).
\end{equation}
\end{proposition}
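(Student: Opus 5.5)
The plan is to apply Theorem~\ref{THNOWEYLPOLY} to the word $\mathrm{w}_{{\bf m},{\bf n}}=(YX)^n$ and then regroup the summands according to the exponents of $Y$ and $X$. This is the same comparison of coefficients used in Proposition~\ref{PropOS}, except that the index set is now the refined partition \eqref{partition}. For $(YX)^n$ we have $r=n$, ${\bf m}=(1,\ldots,1)$ and ${\bf n}=(1,\ldots,1)$, so $|{\bf m}|=|{\bf n}|=n$ and $m_1=1$. The associated Ferrers board is the staircase board $J_n$. Theorem~\ref{THNOWEYLPOLY} therefore gives
\[
(YX)^n=\sum_{p=0}^{n-1}\sum_{{\bf k}\in{\mathcal WC}_{s+1}(p)} m_{{\bf k}}(J_n;q)\,Y^{\,n+\sum_{i=0}^s(i-1)k_i}X^{\,n-p}.
\]

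Next I will use \eqref{partition} to split the inner sum over ${\mathcal WC}_{s+1}(p)$ into a sum over $t$ from $-p$ to $(s-1)p$, with inner sums over ${\mathcal WC}_{s+1}(p|t)$. On each piece the exponent of $Y$ is constant and equals $n+t$. Then I change variables by $k=n-p$ (the exponent of $X$) and $j=n+t$ (the exponent of $Y$). Since this map is a bijection between the index pairs $(p,t)$ and the monomials $Y^jX^k$, the expansion becomes
\[
(YX)^n=\sum_{k=1}^{n}\sum_{j}\Bigl(\sum_{{\bf k}\in{\mathcal WC}_{s+1}(n-k|j-n)} m_{{\bf k}}(J_n;q)\Bigr)Y^jX^k.
\]
The range of $j$ is $n-p\le j\le n+(s-1)p$. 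This lies inside $0\le j\le ns$, because $p\le n-1$. If ${\mathcal WC}_{s+1}(n-k|j-n)$ is empty, we read the inner sum as zero.

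Finally, the monomials $Y^jX^k$ are linearly independent in ${\mathcal A}_f(q)$, since they form a PBW-type basis; the paper uses this implicitly whenever it speaks of normal ordering coefficients. So I can compare coefficients with the defining expansion of ${\mathscr S}_{{\bf \alpha};q}(n;j,k)$, and this yields \eqref{PolyStir}.

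The only point needing care is the bookkeeping in the reindexing. I must check that every pair $(j,k)$ in the definition either occurs exactly once or corresponds to an empty set of weak compositions. I also must check that the bound $p\le |{\bf m}|-m_1=n-1$ matches $k\ge 1$. Both follow directly from the ranges above, so no real obstacle is expected.
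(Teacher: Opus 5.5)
Your proposal is correct and follows the paper's proof essentially verbatim. Like the paper, you specialize Theorem~\ref{THNOWEYLPOLY} to $(YX)^n$ (so $|{\bf m}|-m_1=n-1$), split the inner sum via \eqref{partition}, set $j=n+t$ with $X$-exponent $n-p$, and compare coefficients. Your extra bookkeeping (empty index sets, $p\le n-1$ matching $k\ge 1$, and $n+(s-1)p\le ns$, which holds for $s\ge 1$) is accurate and a bit more careful than the paper's own remark on the range of $j$.
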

\begin{proof}
Specializing \eqref{NOWEYLPOLY} for the word $(YX)^n$ and switching to $\ell=n-k$, one finds
$$
(YX)^n=\sum_{\ell=1}^{n} \sum_{{\bf k} \in {\mathcal WC}_{s+1}(n-\ell)} m_{{\bf k}}(J_n;q) \, Y^{n+\sum_{j=0}^s (j-1)k_j} X^{\ell}.
$$
Using the partition \eqref{partition} and letting $j=n+t$, this yields
$$
(YX)^n=\sum_{\ell=1}^{n} \sum_{j=\ell}^{n+(s-1)(n-\ell)} \sum_{{\bf k} \in {\mathcal WC}_{s+1}(n-\ell|j-n)} m_{{\bf k}}(J_n;q) \, Y^{j} X^{\ell}.
$$
Comparing with the definition of ${\mathscr S}_{{\bf \alpha};q}(n;j,k)$ shows the assertion.  
\end{proof}
\begin{example}
Let $s=1$. In \eqref{PolyStir}, ${\bf k}\equiv (k_0,k_1) \in {\mathcal WC}_{2}(n-k|j-n)$ means that $k_0+k_1=n-k$ and $-k_0=j-n$. Thus, the sum reduces to one summand ${\bf k}=(n-j,j-k)$. Hence, 
$$
{\mathscr S}_{(\alpha_0,\alpha_1);q}(n;j,k)=m_{n-j,j-k}(J_n;q)=S_{\alpha_0,\alpha_1;q}(n;j,k),
$$ 
recovering the $q$-deformed Ore-Stirling numbers $S_{\alpha_0,\alpha_1;q}(n;j,k)$ according to \eqref{QOREEX}. 
\end{example}

One can extend Proposition~\ref{PROPQORESRec} to the $q$-deformed polynomial Stirling numbers as follows.
\begin{proposition}\label{THRECPOLY} The $q$-deformed polynomial Stirling numbers of degree $s$ satisfy the recurrence relation 
\begin{equation}\label{RECPOLY}
{\mathscr S}_{{\bf \alpha};q}(n+1;j,k)=q^{j-1}{\mathscr S}_{{\bf \alpha};q}(n;j-1,k-1)+\sum_{r=0}^s \alpha_r [j-r]_q {\mathscr S}_{{\bf \alpha};q}(n;j-r,k).
\end{equation}
\end{proposition}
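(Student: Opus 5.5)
We mimic the combinatorial proof of Proposition~\ref{PROPQORESRec}, now using \eqref{PolyStir} and Definition~\ref{DefMPN}. First we write
$$
{\mathscr S}_{{\bf \alpha};q}(n+1;j,k)=\sum_{{\bf k}\in {\mathcal WC}_{s+1}(n+1-k|j-n-1)} m_{{\bf k}}(J_{n+1};q),
$$
so we need a weighted enumeration of all mixed rook placements on $J_{n+1}$ with $n+1-k$ rooks in total whose weights satisfy $\sum_r (r-1)k_r=j-n-1$. We again decompose $J_{n+1}=C_n\oplus J_n$, where $C_n$ is the leftmost column of height $n$. Since rooks are placed from right to left, all rooks in $J_n$ are placed before the decision in $C_n$. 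The placements split into $s+2$ types: $C_n$ is empty (type $I$), or $C_n$ carries a single rook of weight $r$, for $r=0,\ldots,s$ (type $II_r$).

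The key step is to compute the effective height of $C_n$ after the rooks on $J_n$ are placed. Under the row creation rule, a rook of weight $i$ in $J_n$ cancels its own row and creates $i$ new rows to the left. Hence each such rook changes the number of available boxes in $C_n$ by $i-1$. If the placement on $J_n$ has type ${\bf k}'$, then $C_n$ has
$$
n+\sum_i (i-1)k'_i
$$
available boxes, which is exactly the $Y$-exponent $j'$ of the corresponding summand for $(YX)^n$.

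For type $I$, the placement on $J_n$ has the same type ${\bf k}$. It therefore contributes to ${\mathscr S}(n;j',k')$ with $n-k'=n+1-k$, so $k'=k-1$, and $j'-n=j-n-1$, so $j'=j-1$. All $j-1$ boxes of $C_n$ are empty, which gives $q^{j-1}{\mathscr S}(n;j-1,k-1)$.

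For type $II_r$, the placement on $J_n$ has $n-k$ rooks, so $k'=k$, and its type satisfies $t'=j-n-1-(r-1)=j-r-n$, so $j'=j-r$. Thus $C_n$ offers $j-r$ available boxes. Placing the rook in the $i$-th available box from the top leaves $j-r-i$ empty boxes below it, while the boxes above it are cancelled. This produces the factor $\alpha_r(1+q+\cdots+q^{j-r-1})=\alpha_r[j-r]_q$. Summing over $r$ gives the second term of \eqref{RECPOLY}.

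The map from placements of each type to pairs (placement on $J_n$, choice in $C_n$) is a bijection, and the weights factor as described, so summing all types yields \eqref{RECPOLY}.

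The main obstacle is the effective-height bookkeeping. We must verify that, with the row creation convention (created rows appended to the left, with cancelled/empty classes as in the definition), the boxes of $C_n$ on the created rows count as empty boxes when no rook sits in $C_n$. We must also verify that a rook placed in $C_n$ cancels exactly the boxes above it. We would check this against the example of Figure~\ref{FigMPL} and the $s=1$ case, which must reduce to \eqref{QORESRec} since $[j-0]_q$ and $[j-1]_q$ match.
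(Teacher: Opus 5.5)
Your proposal is correct and follows the paper's proof. It uses the same decomposition $J_{n+1}=C_n\oplus J_n$ into placements of type $I$ and $II_r$, the same count of $n+\sum_i(i-1)k_i$ available boxes in $C_n$ (giving $j-1$ empty boxes for type $I$ and $j-r$ admissible positions for type $II_r$), and the resulting factors $q^{j-1}$ and $\alpha_r[j-r]_q$. The effective-height bookkeeping you flag as the main obstacle is also what the paper assumes without comment; it holds because every row of $J_n$, original or created, extends to the tallest, leftmost column $C_n$.
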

\begin{proof}
The proof is combinatorial and very similar to the one of Proposition~\ref{PROPQORESRec}. Using \eqref{PolyStir} and the definition of the $m_{{\bf k}}(B;q)$, one has
$$
{\mathscr S}_{{\bf \alpha};q}(n+1;j,k)=\sum_{{\bf k} \in {\mathcal WC}_{s+1}(n+1-k|j-n-1)}{\bf  \alpha}^{{\bf k}} \sum_{\phi \in {\mathcal M}_{{\bf k}}(J_{n+1})} q^{\# \mbox{empty boxes}}.
$$ 
Thus, we have to consider all mixed placements $\phi \in \mathcal{M}_{{\bf k}}(J_{n+1})$. As above, we write in an informal fashion $J_{n+1}=C_n \oplus J_n$. For each type ${\bf k}$, there are $(s+2)$ types of placements of the rooks of the different weights on $J_{n+1}$:
\begin{itemize}
\item Type $I$: Place all rooks on $J_n$ and leave $C_n$ empty.
\item Type $II_r$: Place one rook of weight $r$ in $C_n$ and the remaining rooks on $J_{n}$, for $r=0,\ldots,s$.
\end{itemize} 
Thus, we can write
$$
\mathcal{M}_{{\bf k}}(J_{n+1})=\mathcal{M}_{{\bf k}}^{I}(J_{n+1})\cup\mathcal{M}_{{\bf k}}^{II_0}(J_{n+1})\cup\mathcal{M}_{{\bf k}}^{II_1}(J_{n+1})\cup \cdots \cup\mathcal{M}_{{\bf k}}^{II_s}(J_{n+1}).
$$
Let us start with placements of type $I$. Each such placement  $\phi  \in \mathcal{M}_{{\bf k}}^{I}(J_{n+1})$ corresponds to a placement $\phi' \in \mathcal{M}_{{\bf k}}(J_{n})$. In $C_n$, one has for a placement of type ${\bf k}=(k_0,\ldots,k_s)$ in total $n+\sum_{j=0}^s(j-1)k_j$ empty boxes. Here ${\bf k}\in {\mathcal WC}_{s+1}(n+1-k|j-n-1)$, i.e., $\sum_{j=0}^s(j-1)k_j=(j-n-1)$. Hence, there are $n+(j-n-1)=j-1$ empty boxes in $C_n$. If $\phi'$ has weight $\omega_{{\bf \alpha};q}(J_n,\phi')$ then $\omega_{{\bf \alpha};q}(J_{n+1},\phi)=q^{j-1}\omega_{{\bf \alpha}}(J_n,\phi')$. Thus, the contribution of all placements of type $I$ is given by
$$
\sum_{{\bf k} \in {\mathcal WC}_{s+1}(n-(k-1)|(j-1)-n)}{\bf  \alpha}^{{\bf k}} q^{j-1}\sum_{\phi' \in {\mathcal M}_{{\bf k}}(J_{n})} q^{\# \mbox{empty boxes}} =q^{j-1}{\mathscr S}_{{\bf \alpha};q}(n;j-1,k-1).
$$
Let us turn to placements of type $II_r$ (for $r=0,\ldots,s$). For ${\bf k}=(k_0,\ldots,k_s)$, let us denote 
$$
{\bf k}^{(r)}\equiv (k_0^{(r)},\ldots,k_{s}^{(r)})=(k_0-\delta_{r,0},\ldots,k_{s}-\delta_{r,s}), 
$$
i.e., the $r$-th entry is reduced by one. One has $\sum_{j=0}^s(j-1)k_{j}^{(r)}=\sum_{j=0}^s(j-1)k_{j}-(r-1)$. Thus, 
$$
{\bf k}\in {\mathcal WC}_{s+1}(n+1-k|j-n-1) \Longrightarrow {\bf k}^{(r)}\in {\mathcal WC}_{s+1}(n-k|j-n-r),
$$
and one has ${\bf  \alpha}^{{\bf k}}=\alpha_r{\bf  \alpha}^{{\bf k}^{(r)}}$. If $\phi' \in \mathcal{M}_{{\bf k}^{(r)}}(J_{n})$, then there are $n+\sum_{j=0}^s(j-1)k_{j}^{(r)}=j-r$ possible boxes to place the rook of weight $r$ on $C_n$ to get a placement $\phi \in \mathcal{M}_{{\bf k}}^{II_r}(J_{n+1})$. Placing the rook in the $\ell$-th possible row from above (where $\ell=1,\ldots,j-r$) will result in $(j-r-\ell)$ additional empty boxes. Thus, the sum of the weights of these $j-r$ placements $\phi$ is given by $\alpha_r (1+q+\cdots+q^{j-r-1})\omega_{{\bf \alpha};q}(J_n,\phi')=\alpha_r [j-r]_q \omega_{{\bf \alpha};q}(J_n,\phi')$. The contribution of all placements of type $II_r$ is, therefore, given by
$$
\sum_{{\bf k}^{(r)} \in {\mathcal WC}_{s+1}(n-k|j-r-n)} \alpha_r {\bf  \alpha}^{{\bf k}^{(r)}}[j-r]_q 
\sum_{\phi' \in {\mathcal M}_{{\bf k}^{(r)}}(J_{n})} q^{\# \mbox{empty boxes}}=\alpha_r [j-r]_q {\mathscr S}_{{\bf \alpha};q}(n;j-r,k).
$$
Summing over the types $I$ and $II_r$, for $r=0,\ldots,s$, yields the assertion.
\end{proof}

As extension of the $q$-deformed Ore-Lah numbers defined in\eqref{QOreLahDef}, we define the {\em $q$-deformed polynomial Lah numbers of degree $s$}, ${\mathscr L}_{{\bf \alpha};q}(n;j,k)$, as normal ordering coefficients of $(Y^2X)^n$ in ${\mathcal A}_f(q)$,
$$
(Y^2X)^n=\sum_{j=0}^{(s+1)n}\sum_{k=1}^{n} {\mathscr L}_{{\bf \alpha};q}(n;j,k )Y^j X^{k}. 
$$
Recalling that the word $(Y^2X)^n$ outlines the Lah board ${\mathcal L}_n$, one can show in the same way the following analogues of \eqref{PolyStir} and \eqref{RECPOLY}.
\begin{proposition} The $q$-deformed polynomial Lah numbers of degree $s$ are given by 
\begin{equation}\label{PolyLah}
{\mathscr L}_{{\bf \alpha};q}(n;j,k)=\sum_{{\bf k} \in {\mathcal WC}_{s+1}(n-k|j-2n)} m_{{\bf k}}({\mathcal L}_n;q).
\end{equation}
They satisfy the recurrence relation
\begin{equation}\label{PolyLahRec}
{\mathscr L}_{{\bf \alpha};q}(n+1;j,k)=q^{j-2}{\mathscr L}_{{\bf \alpha};q}(n;j-2,k-1)+\sum_{r=0}^s \alpha_r [j-r-1]_q {\mathscr L}_{{\bf \alpha};q}(n;j-r-1,k).
\end{equation}
\end{proposition}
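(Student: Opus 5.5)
The plan is to follow the proofs of \eqref{PolyStir} and Proposition~\ref{THRECPOLY} line by line, replacing the staircase board $J_n$ by the Lah board ${\mathcal L}_n$.

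\emph{Formula \eqref{PolyLah}.} The word $(Y^2X)^n$ has $|{\bf m}|=n$, $|{\bf n}|=2n$ and $m_1=1$. Theorem~\ref{THNOWEYLPOLY} therefore gives
$$
(Y^2X)^n=\sum_{\kappa=0}^{n-1}\sum_{{\bf k}\in{\mathcal WC}_{s+1}(\kappa)} m_{{\bf k}}({\mathcal L}_n;q)\,Y^{2n+\sum_{i}(i-1)k_i}X^{n-\kappa}.
$$
I substitute $k=n-\kappa$ and split ${\mathcal WC}_{s+1}(n-k)$ according to the partition \eqref{partition} into the pieces ${\mathcal WC}_{s+1}(n-k|t)$. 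Setting $j=2n+t$, the exponent of $Y$ becomes $j$. Comparing coefficients of $Y^jX^k$ with the definition of ${\mathscr L}_{{\bf\alpha};q}(n;j,k)$ then yields \eqref{PolyLah}.

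\emph{Recurrence \eqref{PolyLahRec}.} I will use the decomposition ${\mathcal L}_{n+1}=C_{2n}\oplus{\mathcal L}_n$, as in Proposition~\ref{OreLahRec}. By \eqref{PolyLah},
$$
{\mathscr L}_{{\bf\alpha};q}(n+1;j,k)=\sum_{{\bf k}\in{\mathcal WC}_{s+1}(n+1-k|j-2n-2)}m_{{\bf k}}({\mathcal L}_{n+1};q).
$$
I split the placements into two kinds: type $I$, where $C_{2n}$ is empty, and type $II_r$, where a rook of weight $r$ sits in $C_{2n}$, for $r=0,\ldots,s$.

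The key count is the number of available boxes in $C_{2n}$ when a placement of type ${\bf k}'$ lies on ${\mathcal L}_n$ to its right. Each rook of weight $i$ removes one row and creates $i$ new ones, so this number is $2n+\sum_i(i-1)k'_i$.

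For type $I$, ${\bf k}'={\bf k}$ and $\sum_i(i-1)k_i=j-2n-2$. So $C_{2n}$ contributes $j-2$ empty boxes, i.e.\ a factor $q^{j-2}$. The remaining placement has type in ${\mathcal WC}_{s+1}(n-(k-1)\,|\,(j-2)-2n)$, which gives $q^{j-2}{\mathscr L}_{{\bf\alpha};q}(n;j-2,k-1)$.

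For type $II_r$, take ${\bf k}'={\bf k}^{(r)}$, with $\sum_i(i-1)k^{(r)}_i=j-2n-2-(r-1)=j-2n-r-1$. Then $C_{2n}$ has $j-r-1$ available boxes. Placing the rook in the $\ell$-th available box from the top leaves $j-r-1-\ell$ empty boxes below it, and the boxes above it are cancelled. Summing over $\ell$ gives the factor $\alpha_r[j-r-1]_q$. The type ${\bf k}^{(r)}$ lies in ${\mathcal WC}_{s+1}(n-k\,|\,(j-r-1)-2n)$, so the contribution is $\alpha_r[j-r-1]_q{\mathscr L}_{{\bf\alpha};q}(n;j-r-1,k)$.

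Summing the type $I$ and type $II_r$ contributions proves \eqref{PolyLahRec}.

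\emph{Main obstacle.} The delicate step is justifying the box count $2n+\sum_i(i-1)k'_i$ in $C_{2n}$ under the row creation rules. In particular, the leftmost column must see exactly this many uncancelled rows, and only the boxes below the new rook count as empty. I would argue this exactly as in Proposition~\ref{THRECPOLY}, since $C_{2n}$ has full height $2n$, so every row of ${\mathcal L}_n$ (original or created) extends into it.
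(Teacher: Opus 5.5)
Your proposal is correct and follows the paper's own (sketched) proof essentially step by step. You specialize Theorem~\ref{THNOWEYLPOLY} to $(Y^2X)^n$ and split by \eqref{partition} with $j=2n+t$, then decompose ${\mathcal L}_{n+1}=C_{2n}\oplus{\mathcal L}_n$ into placements of type $I$ (factor $q^{j-2}$) and type $II_r$ (factor $\alpha_r[j-r-1]_q$). Your justification of the count $2n+\sum_i(i-1)k'_i$ of available boxes in $C_{2n}$ — every row of ${\mathcal L}_n$, original or created, extends into the full-height leftmost column — is exactly the point the paper takes for granted.
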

\begin{proof}
Since the proofs are so similar to the ones for the polynomial Stirling numbers, we give only a sketch. To show \eqref{PolyLah}, we specialize \eqref{NOWEYLPOLY} for the word $(Y^2X)^n$ and switch to $\ell=n-k$, 
$$
(Y^2X)^n=\sum_{\ell=1}^{n} \sum_{{\bf k} \in {\mathcal WC}_{s+1}(n-\ell)} m_{{\bf k}}({\mathcal L}_n;q) \, Y^{2n+\sum_{j=0}^s (j-1)k_j} X^{\ell}.
$$
Using the partition \eqref{partition} and letting $j=2n+t$, this yields the assertion. To show \eqref{PolyLahRec}, start from
$$
{\mathscr L}_{{\bf \alpha};q}(n+1;j,k)=\sum_{{\bf k} \in {\mathcal WC}_{s+1}(n-k+1|j-2n-2)} m_{{\bf k}}({\mathcal L}_{n+1};q).
$$
Recall that ${\mathcal L}_{n+1}=C_{2n}\oplus {\mathcal L}_n$. The placements of type $I$ correspond to placements of all rooks on ${\mathcal L}_n$. In $C_{2n}$ there are $2n+(j-2n-2)=j-2$ empty boxes. Writing ${\mathcal WC}_{s+1}(n-k+1|j-2n-2)={\mathcal WC}_{s+1}(n-(k-1)|(j-2)-2n)$, we see that all pacements of type $I$ contribute $q^{j-2}{\mathscr L}_{{\bf \alpha};q}(n;j-2,k-1)$. For a placement of type $II_r$ (with $r=0,\ldots,s$) we place one rook of weight $r$ on $C_{2n}$ and the remaining rooks on ${\mathcal L}_n$. The type ${\bf k}^{(r)}$ of the placement on ${\mathcal L}_n$ is an element of ${\mathcal WC}_{s+1}(n-k|j-2n-2-(r-1))={\mathcal WC}_{s+1}(n-k|(j-r-1)-2n)$. In $C_{2n}$ there are $(j-r-1)$ empty boxes, implying that the contribution of all placements of Type $II_r$ is given by $\alpha_r [j-r-1]_q {\mathscr L}_{{\bf \alpha};q}(n;j-r-1,k)$. 
\end{proof}
\begin{example}
Let $s=1$. In \eqref{PolyLah}, ${\bf k}\equiv (k_0,k_1) \in {\mathcal WC}_{2}(n-k|j-2n)$ means that $k_0+k_1=n-k$ and $-k_0=j-2n$. Thus, the sum reduces to one summand ${\bf k}=(2n-j,j-n-k)$. Hence,
$$
{\mathscr L}_{(\alpha_0,\alpha_1);q}(n;j,k)=m_{2n-j,j-n-k}({\mathcal L}_n;q)=L_{\alpha_0,\alpha_1;q}(n;j,k),
$$ 
recovering the $q$-deformed Ore-Lah numbers $L_{\alpha_0,\alpha_1;q}(n;j,k)$ according to \eqref{QORELAHEX}. Furthermore, the recurrence relation \eqref{PolyLahRec} reduces to the one for the Ore-Lah numbers given in Proposition~\ref{OreLahRec}.
\end{example}
\begin{remark}
In the $q$-deformed Ore algebra we defined the $q$-deformed Ore-Scherk numbers of order $r$, for $r\in \mathbb{N}$, as normal ordering coefficients of $(Y^rX)^n$. In the same fashion, we can define the {\em $q$-deformed polynomial Scherk coefficients of order $r$ and degree $s$}, ${\mathcal S}^{(r)}_{{\bf \alpha};q}(n;j,k)$, as normal ordering coefficients of $(Y^rX)^n$ in  ${\mathcal A}_f(q)$. The same argument as above shows that
\begin{equation}\label{PolyScherk}
{\mathcal S}^{(r)}_{{\bf \alpha};q}(n;j,k)=\sum_{{\bf k} \in {\mathcal WC}_{s+1}(n-k|j-rn)} m_{{\bf k}}(J_{n,r};q).
\end{equation}
For $s=1$, only the summand ${\bf k}=(rn-j,j-(r-1)n-k)$ remains, thus
$$
{\mathcal S}^{(r)}_{(\alpha_0,\alpha_1);q}(n;j,k)=m_{rn-j,j-(r-1)n-k}(J_{n,r};q)=S^{(r)}_{\alpha_0,\alpha_1;q}(n;j,k),
$$
recovering the $q$-deformed Ore-Scherk numbers according to \eqref{OREScherk}. Noting that $J_{n+1,r}=C_{rn}\oplus J_{n,r}$, the same argument as above implies the recurrence relation
$$
{\mathcal S}^{(r)}_{{\bf \alpha};q}(n+1;j,k)=q^{j-r}{\mathcal S}^{(r)}_{{\bf \alpha};q}(n;j-r,k-1)+\sum_{\ell=0}^s \alpha_{\ell} [j-\ell -(r-1)]_q {\mathcal S}^{(r)}_{{\bf \alpha};q}(n;j-\ell -(r-1),k),
$$
which reduces, for $r=1$, to \eqref{RECPOLY} and, for $r=2$, to \eqref{PolyLahRec}. For $s=1$, this gives the recurrence relation of the $q$-deformed Ore-Scherk numbers $S^{(r)}_{\mu,\nu;q}(n;j,k)$.
\end{remark}
Using the same arguments as above for deriving the recurrence relation of the $q$-deformed polynomial Stirling numbers, we can give derive an extension of the result given by Celeste et al. \cite[Theorem 7]{CCG2017} for the $q$-deformed rook numbers $R_{s, h;q}[B_{n}(\lambda),k]$ to the recurrence relation for the $q$-deformed mixed placement numbers. For this, we consider the Ferrers board $B_{\lambda_{n-1}\lambda_{n-2}\cdots \lambda_{1}\lambda_{0}}$, i.e., we label the columns from right to left, and the first column from the right has $\lambda_{0}$ boxes, while the first column to the left has $\lambda_{n-1}$ boxes. If we let $\lambda=\lambda_{n-1}\lambda_{n-2}\cdots \lambda_{1}\lambda_{0}$ be the associated partition, we denote the above board also as $B_n(\lambda)$, and $B_{n-1}(\lambda)=B_{\lambda_{n-2}\cdots  \lambda_{0}}$. 
\begin{theorem}\label{RecurrenceMain}
Let ${\bf k}\equiv (k_0,\ldots,k_s)\in \mathbb{N}_0^{s+1}$. The $q$-deformed mixed placement numbers $m_{{\bf k}}(B_n(\lambda);q)$ satisfy the following recursive formula 
\begin{eqnarray*}
m_{{\bf k}}(B_n(\lambda);q) &=& q^{\lambda_{n-1}+\sum_{j=0}^s (j-1)k_j}m_{{\bf k}}(B_{n-1}(\lambda);q)\\ && +\sum_{r=0}^s \alpha_r \left[\lambda_{n-1}+\sum_{j=0}^s(j-1)k_{j}-(r-1) \right]_q m_{{\bf k}^{(r)}}(B_{n-1}(\lambda);q),
\end{eqnarray*}
where ${\bf k}^{(r)}=(k_0-\delta_{r,0},\ldots,k_{s}-\delta_{r,s})$ and only summands with ${\bf k}^{(r)}\in \mathbb{N}_0^{s+1}$ contribute. For ${\bf k}={\bf 0}$, one has $m_{{\bf 0}}(B_n(\lambda);q)=q^{|\lambda|}$, where $|\lambda|=\lambda_{n-1}+\cdots+\lambda_{0}$ denotes the length of the partition $\lambda$.
\end{theorem}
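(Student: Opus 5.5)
We decompose by the leftmost column, exactly as in the proofs of Proposition~\ref{PROPQORESRec} and Proposition~\ref{THRECPOLY}. The plan is to write $B_n(\lambda)=C_{\lambda_{n-1}}\oplus B_{n-1}(\lambda)$, where $C_{\lambda_{n-1}}$ is the leftmost column with $\lambda_{n-1}$ boxes. We then split $\mathcal{M}_{{\bf k}}(B_n(\lambda))$ according to the content of this column. It is either empty (type $I$), or it carries exactly one rook of weight $r$ (type $II_r$, $r=0,\ldots,s$). At most one rook can sit in a column, so this is a disjoint decomposition.

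The key step is a counting lemma. Because placements proceed from right to left, when we arrive at the leftmost column all other rooks of a placement $\phi'$ of type ${\bf k}'$ on $B_{n-1}(\lambda)$ have already been placed. Each rook of weight $j$ cancels one row and creates $j$ new rows, so the column effectively has $\lambda_{n-1}+\sum_{j}(j-1)k'_j$ available boxes. This number is independent of where the rooks of $\phi'$ sit: rows are cancelled to the left of a rook, and Ferrers shape guarantees that every row of the column of any rook extends to the leftmost column. I would verify this carefully using the equivalent visualization of Figure~\ref{FigFile2}, where created rows are inserted to the left. The remaining boxes of the column are cancelled, i.e.\ lie to the left of some rook. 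This step, making precise that the available boxes in $C_{\lambda_{n-1}}$ are exactly the non-cancelled ones and that their number depends only on the type, is the main obstacle.

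With the lemma in hand, the two cases go as follows. For type $I$ we have ${\bf k}'={\bf k}$, and all available boxes of the column are empty. This gives the factor $q^{\lambda_{n-1}+\sum_j(j-1)k_j}$ times the weight of $\phi'$; the weights on $B_{n-1}(\lambda)$ are unchanged because the classes of boxes there do not depend on the leftmost column. For type $II_r$ we have ${\bf k}'={\bf k}^{(r)}$, and the number of available boxes is $N=\lambda_{n-1}+\sum_j(j-1)k_j-(r-1)$. Placing the rook in the $\ell$-th available box from the top makes the boxes above it cancelled and leaves $N-\ell$ empty boxes below. So the $N$ choices together contribute $\alpha_r[N]_q$ times the weight of $\phi'$. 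This is the same argument as in the proof of Proposition~\ref{THRECPOLY}, except that the column height $n$ is replaced by $\lambda_{n-1}$. If $k_r=0$, the summand does not occur.

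Summing over $\phi'$ and over the types yields the stated recursion. For ${\bf k}={\bf 0}$ there are no rooks, so every box is empty and $m_{\bf 0}=q^{|\lambda|}$. This can also be checked by iterating the type $I$ term, since $\sum_j(j-1)k_j=0$ in that case.
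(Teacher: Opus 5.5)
Your proposal is correct and follows essentially the same route as the paper. Both write $B_n(\lambda)=C_{\lambda_{n-1}}\oplus B_{n-1}(\lambda)$, split the placements into type $I$ and types $II_r$, count $\lambda_{n-1}+\sum_j(j-1)k'_j$ available boxes in the leftmost column, and sum $q^{N-\ell}$ over the position of the new rook to obtain $\alpha_r[N]_q$. The only difference is that you state explicitly what the paper leaves implicit: the count depends only on the type ${\bf k}'$, because the rook rows are distinct and, by the Ferrers shape, each of them reaches the leftmost column.
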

\begin{proof} 
Exactly as in the proof of Proposition~\ref{THRECPOLY}, we write $B_n(\lambda)=C_{\lambda_{n-1}}\oplus B_{n-1}(\lambda)$ and we partition the set of placement of rooks on $B_n(\lambda)$ into those of type $I$ (all rooks are placed in $B_{n-1}(\lambda)$ and the column $C_{\lambda_{n-1}}$ is left empty), and those of type $II_r$ (for $r=0,\ldots,s$) where one rook of weight $r$ is placed in $C_{\lambda_{n-1}}$ and the remaining rooks are placed in $B_{n-1}(\lambda)$. Since for each placement of type $I$ there are $(\lambda_{n-1}+\sum_{j=0}^s (j-1)k_j)$ empty boxes in $C_{\lambda_{n-1}}$ the contribution of all placements of type $I$ is given by 
$$
q^{\lambda_{n-1}+\sum_{j=0}^s (j-1)k_j}m_{{\bf k}}(B_{n-1}(\lambda);q).
$$
Consider a placement of type $II_r$ (for $r=0,\ldots,s$). For ${\bf k}=(k_0,\ldots,k_s)$, we write as above ${\bf k}^{(r)}\equiv (k_0^{(r)},\ldots,k_{s}^{(r)})=(k_0-\delta_{r,0},\ldots,k_{s}-\delta_{r,s})$. If $\phi' \in \mathcal{M}_{{\bf k}^{(r)}}(B_{n-1}(\lambda))$, then there are $\lambda_{n-1}+\sum_{j=0}^s(j-1)k_{j}^{(r)}= \lambda_{n-1}+\sum_{j=0}^s(j-1)k_{j}-(r-1)$ possible boxes to place the rook of weight $r$ on $C_{\lambda_{n-1}}$ to get a placement $\phi \in \mathcal{M}_{{\bf k}}^{II_r}(B_{n}(\lambda))$. Placing the rook in the $\ell$-th possible row from above (where $\ell=1,\ldots, \lambda_{n-1}+\sum_{j=0}^s(j-1)k_{j}-(r-1))$ will result in $(\lambda_{n-1}+\sum_{j=0}^s(j-1)k_{j}-(r-1)-\ell)$ additional empty boxes. Thus, the sum of the weights of these $\lambda_{n-1}+\sum_{j=0}^s(j-1)k_{j}-(r-1)$ placements $\phi$ is given by $\alpha_r [\lambda_{n-1}+\sum_{j=0}^s(j-1)k_{j}-(r-1)]_q \omega_{{\bf \alpha};q}(B_{n-1}(\lambda),\phi')$. The contribution of all placements of type $II_r$ is, therefore, given by
$$
\alpha_r [\lambda_{n-1}+\sum_{j=0}^s(j-1)k_{j}-(r-1)]_q \, m_{{\bf k}^{(r)}}(B_{n-1}(\lambda);q).
$$
Summing over the types $I$ and $II_r$, for $r=0,\ldots,s$ gives the asserted recurrence. The initial value $m_{{\bf 0}}(B_n(\lambda);q)=q^{|\lambda|}$ follows directly from Definition~\ref{DefMPN}.
\end{proof}
For a pure type ${\bf k}= (0,\ldots,0,k_{\ell},0,\ldots,0)$, one has $
m_{0,\ldots,0,k_{\ell},0,\ldots,0}(B_{n}(\lambda);q)= R_{\ell, \alpha_{\ell};q}[B_{n}(\lambda),k_{\ell}]$, see \eqref{RedPure}. Thus, the recurrence relation of Theorem~\ref{RecurrenceMain} reduces to
\begin{eqnarray*}
R_{\ell, \alpha_{\ell};q}[B_{n}(\lambda),k_{\ell}] &=& q^{\lambda_{n-1}+(\ell-1)k_{\ell}} R_{\ell, \alpha_{\ell};q}[B_{n-1}(\lambda),k_{\ell}] \\ && +\alpha_{\ell}[\lambda_{n-1}+(\ell-1)(k_{\ell}-1)]_q R_{\ell, \alpha_{\ell};q}[B_{n-1}(\lambda),k_{\ell}-1],
\end{eqnarray*}
which is exactly the result given by Celeste et al. \cite[Theorem 7]{CCG2017}. Specializing to $q=1$, one recovers the recurrence relation given by Goldman and Haglund \cite[Equation (3)]{JGJH2000}. 

Similar to the case of the $q$-deformed Ore algebra ${\mathcal O}_{\mu,\nu}(q)$, we can combine \eqref{binomgen2} and \eqref{NOWEYLPOLY} to derive the following generalization of Theorem~\ref{THOREBINOM}
\begin{theorem}\label{THPOLYBINOM} Let $X$ and $Y$ satisfy the commutation relation \eqref{WeylPoly} of the $q$-deformed polynomial Weyl algebra ${\mathcal A}_{f}(q)$. Then one has, for $m\in \mathbb{N}$, the normal ordered binomial formula,
\begin{equation}\label{POLYBINOM}
(X+Y)^m=\sum_{r=1}^{m}\sum_{\ell=r}^m   \sum_{t=r-\ell}^{(s-1)(\ell -r)}  {\mathfrak M}_{m,r,\ell,t}(q) \, Y^{m-\ell+t} X^{r},
\end{equation}
where the combinatorial coefficients ${\mathfrak M}_{m,r,\ell,t}(q)$ are defined by
$$
{\mathfrak M}_{m,r,\ell,t}(q)=\sum_{\lambda \in \mathcal{I}_{m-\ell,\ell}} \sum_{{\bf k} \in {\mathcal WC}_{s+1}(\ell -r|t)} m_{{\bf k}}(B_{\omega_{\lambda}};q). 
$$
\end{theorem}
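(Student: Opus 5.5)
We follow the derivation of Theorem~\ref{THOREBINOM} verbatim, replacing Theorem~\ref{THNOORE} by Theorem~\ref{THNOWEYLPOLY}. The starting point is \eqref{binomgen2}, which holds independently of the commutation relation:
\[
(X+Y)^m=\sum_{\ell=0}^m \sum_{\lambda \in \mathcal{I}_{m-\ell,\ell}}Y^{m-\ell-\lambda_1}\omega_{\lambda}.
\]
For $\lambda\in\mathcal{I}_{m-\ell,\ell}$ the word $\omega_\lambda=XY^{\lambda_1-\lambda_2}\cdots XY^{\lambda_\ell}$ has the form $\mathrm{w}_{{\bf m},{\bf n}}$ with all $m_j=1$. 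Hence $|{\bf m}|=\ell$, $m_1=1$ and $|{\bf n}|=\lambda_1$. Applying \eqref{NOWEYLPOLY} gives
\[
\omega_\lambda=\sum_{k=0}^{\ell-1}\sum_{{\bf k}\in{\mathcal WC}_{s+1}(k)} m_{{\bf k}}(B_{\omega_\lambda};q)\,Y^{\lambda_1+\sum_{j}(j-1)k_j}X^{\ell-k}.
\]
Multiplying on the left by $Y^{m-\ell-\lambda_1}$ cancels $\lambda_1$ from the exponent of $Y$, leaving $Y^{m-\ell+\sum_j(j-1)k_j}X^{\ell-k}$. This is already normal ordered.

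Next we reindex. Put $r=\ell-k$, the power of $X$, so that $k=\ell-r$ and $r$ runs over $1,\dots,\ell$; the bound $k\le \ell-1$ comes from $|{\bf m}|-m_1=\ell-1$. We then split the inner sum using the partition \eqref{partition}:
\[
{\mathcal WC}_{s+1}(\ell-r)=\bigcup_{t=-(\ell-r)}^{(s-1)(\ell-r)}{\mathcal WC}_{s+1}(\ell-r|t),
\]
which turns the power of $Y$ into $Y^{m-\ell+t}$. The lower limit $-(\ell-r)=r-\ell$ matches the range of $t$ in \eqref{POLYBINOM}. This gives
\[
(X+Y)^m=\sum_{\ell=1}^m\sum_{\lambda\in\mathcal{I}_{m-\ell,\ell}}\sum_{r=1}^{\ell}\sum_{t=r-\ell}^{(s-1)(\ell-r)}\sum_{{\bf k}\in{\mathcal WC}_{s+1}(\ell-r|t)} m_{{\bf k}}(B_{\omega_\lambda};q)\,Y^{m-\ell+t}X^r,
\]
plus the $\ell=0$ term $Y^m$. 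Exchanging the finite sums, so that $r$ runs from $1$ to $m$ and $\ell$ from $r$ to $m$, and moving the $\lambda$-sum inside, produces exactly ${\mathfrak M}_{m,r,\ell,t}(q)$ and \eqref{POLYBINOM}.

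The main obstacle is bookkeeping at the boundary terms. The stated formula starts at $r=1$, but $(X+Y)^m$ always contains the term $Y^m$ (from $\ell=0$). It can also contain pure powers of $Y$ from full contractions of $\omega_\lambda$, which have $r=0$. This is where I would be most careful. Theorem~\ref{THNOWEYLPOLY}, as stated with upper limit $|{\bf m}|-m_1$, excludes $k=\ell$, i.e.\ $r=0$. So either the paper's convention deliberately drops these terms, or the $r=0$ terms should be absorbed by letting $r$ start at $0$. I would check this against the smallest case $m=1$ and the Ore case $s=1$, where \eqref{OREBINOM} includes $k=0$. If needed, I would state the formula with $r$ starting at $0$, which is consistent with Theorem~\ref{THOREBINOM}.
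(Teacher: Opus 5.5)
Your route is the same as the paper's: expand by \eqref{binomgen2}, normal order each $\omega_\lambda$ with Theorem~\ref{THNOWEYLPOLY}, substitute $r=\ell-k$, split by \eqref{partition}, and exchange the finite sums. The boundary worry you raise at the end is not a matter of convention, though, and you should not leave it open. The identity as printed is false, so no bookkeeping can close your argument. Your derivation leaves a stray $Y^m$ and then asserts it produces ``exactly'' \eqref{POLYBINOM}; that is the step that fails.

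Two small cases show the failure:
\begin{itemize}
\item For $m=1$ the right-hand side of \eqref{POLYBINOM} has only the term $r=\ell=1$, $t=0$, with $\lambda$ the empty partition and $\omega_\lambda=X$. It equals $X\neq X+Y$.
\item For $m=2$ it equals $X^2+(1+q)YX$, so it misses $Y^2$ and $\sum_{j=0}^s\alpha_jY^j$. The latter come from a single rook of weight $j$ in the one box of $B_{XY}$, and they have $r=0$.
\end{itemize}
The paper's own proof contains the same omission. It writes $\sum_{k=0}^{\ell-1}$ and then $\sum_{r=1}^{\ell}$, which even kills the whole $\ell=0$ summand.

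The faulty step is your identification $m_1=1$. In $\mathrm{w}_{{\bf m},{\bf n}}=Y^{n_r}X^{m_r}\cdots Y^{n_1}X^{m_1}$, the number $m_1$ is the exponent of the trailing $X$-block. But $\omega_\lambda=XY^{\lambda_1-\lambda_2}\cdots XY^{\lambda_\ell}$ ends in $Y^{\lambda_\ell}$. Hence $m_1=0$ whenever $\lambda_\ell\ge 1$, and $\omega_\lambda$ is the empty word when $\ell=0$. In those cases Theorem~\ref{THNOWEYLPOLY} does allow $k=\ell$; it is not the theorem that excludes full contractions.

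When $\lambda_\ell=0$, the rightmost column of $B_{\omega_\lambda}$ has height $0$. Rows are only created to the left, so no row creation can reach that column. Therefore $m_{\bf k}(B_{\omega_\lambda};q)=0$ for $|{\bf k}|=\ell$. So in all cases you may let $k$ run from $0$ to $\ell$, i.e.\ $r$ from $0$ to $\ell$.

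With this range your computation yields \eqref{POLYBINOM} with $\sum_{r=1}^{m}$ replaced by $\sum_{r=0}^{m}$. The term $r=\ell=0$ is $Y^m$, because $m_{\bf 0}$ of the empty board is $1$. For $s=1$ (with $\tau=-t$) the corrected formula coincides with Theorem~\ref{THOREBINOM}, whose $k$-sum starts at $0$. The printed version does not coincide, contrary to the remark following the paper's proof. So commit to your fallback: state and prove the formula with $r\ge 0$, and attribute the missing terms to the misidentified $m_1$.
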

\begin{proof}
Starting from \eqref{binomgen2}, we observe that the words $\omega_{\lambda}$ have a special structure when compared to the general form considered in Theorem~\ref{THNOWEYLPOLY}. Here all $m_j=1$, for $j=1,\ldots,\ell$, thus $|{\bf m}|=\ell$. Similarly, $|{\bf n}|=|(\lambda_1-\lambda_2,\lambda_2-\lambda_3,\ldots,\lambda_{\ell-1}-\lambda_{\ell},\lambda_{\ell} )|=\lambda_1$. Thus, \eqref{NOWEYLPOLY} reduces to 
$$
\omega_{\lambda} =\sum_{k=0}^{\ell -1} \sum_{{\bf k} \in {\mathcal WC}_{s+1}(k)} m_{{\bf k}}(B_{\omega_{\lambda}};q) Y^{\lambda_1+\sum_{j=0}^s (j-1)k_j} X^{\ell - k}.
$$
Inserting this into \eqref{binomgen2} and switching to $r=\ell -k$ yields
$$
(X+Y)^m=\sum_{\ell=0}^m \sum_{r=1}^{\ell} \sum_{\lambda \in \mathcal{I}_{m-\ell,\ell}}  \sum_{{\bf k} \in {\mathcal WC}_{s+1}(\ell -r)} m_{{\bf k}}(B_{\omega_{\lambda}};q) \, Y^{m-\ell+\sum_{j=0}^s (j-1)k_j} X^{r}.
$$
Switching the order of summation and using the partition \eqref{partition}, one has
$$
(X+Y)^m=\sum_{r=1}^{m}\sum_{\ell=r}^m  \sum_{\lambda \in \mathcal{I}_{m-\ell,\ell}}  \sum_{t=r-\ell}^{(s-1)(\ell -r)}\sum_{{\bf k} \in {\mathcal WC}_{s+1}(\ell -r|t)} m_{{\bf k}}(B_{\omega_{\lambda}};q) \, Y^{m-\ell+\sum_{j=0}^s (j-1)k_j} X^{r}.
$$
For ${\bf k} \in {\mathcal WC}_{s+1}(\ell -r|t)$ one has $\sum_{j=0}^s (j-1)k_j=t$, yielding the assertion.  
\end{proof}
Let $s=1$. Then ${\bf k}\in {\mathcal WC}_{2}(\ell -r|t)$ means that ${\bf k}=(-t,\ell-r-t)$. Since $t$ is negative, we switch to $\tau =-t$ as variable and let also $k\equiv r$. It follows that \eqref{POLYBINOM} reduces to
$$
(X+Y)^m=\sum_{k=1}^{m}\sum_{\ell=k}^m   \sum_{\tau =0}^{\ell -k}  {\mathfrak M}_{m,k,\ell,-\tau}(q) \, Y^{m-\ell-\tau} X^{k},
$$
where the combinatorial coefficients are given by
$$
 {\mathfrak M}_{m,k,\ell,-\tau}(q)=\sum_{\lambda \in \mathcal{I}_{m-\ell,\ell}} m_{\tau,\ell-k-\tau}(B_{\omega_{\lambda}};q).
$$
Thus, we recover Theorem~\ref{THOREBINOM} for the $q$-deformed Ore algebra.

\section{Conclusion}\label{Conclusion}
In this paper, we considered normal ordering in the $q$-deformed Ore algebra. Mixed placements of rooks and files were defined and it was shown that the normal ordering coefficients of arbitrary words are given as mixed placement numbers of rooks and files. The corresponding $q$-deformed Ore-Stirling and Ore-Lah numbers were introduced and studied. Furthermore, we gave an expression for the normal ordered binomial formula in the $q$-deformed Ore algebra. The $q$-deformed Ore algebra can be considered as the case $s=1$ of the $q$-deformed polynomial Weyl algebra (while the case $s=0$ corresponds to the conventional $q$-deformed Weyl algebra). We discussed how the definitions and results can be extended from the $q$-deformed Ore algebra to the $q$-deformed polynomial Weyl algebra. For this, we introduced $q$-deformed mixed placement numbers and showed that the normal ordering coefficients of arbitrary words in the $q$-deformed polynomial algebra are given in terms of these $q$-deformed mixed placement numbers. We introduced and studied the associated $q$-deformed polynomial Stirling and Lah numbers in detail. Since the cases $s=0$ (Weyl algebra) and $s=1$ (Ore algebra) have been studied explicitly, it might be interesting to consider the case $s=2$ -- where $XY-qYX=\alpha_0 I + \alpha_1 Y+\alpha_2Y^2$ -- in more detail, since the relation $XY-qYX =\alpha_2Y^2$ is the defining relation for the $q$-deformed Jordan plane, a well-known object. This will be the subject of further study.

\end{document}